\newcommand{\R}{\mathbb R}
\newcommand{\Z}{\mathbb Z}
\newcommand{\T}{\mathbb T}
\newcommand{\N}{\mathbb N}
\newcommand{\F}{ {\mathcal F} }
\newcommand{\eps}{\varepsilon}
\newcommand{\si}{\sigma}
\DeclareMathOperator{\RE}{Re}
\def\Eq#1#2{\mathop{\sim}\limits_{#1\rightarrow#2}}
\def\Tend#1#2{\mathop{\longrightarrow}\limits_{#1\rightarrow#2}}
\def\TagOnRight
\theoremstyle{plain}
\newtheorem{theorem}{Theorem} [section]
\newtheorem{lemma}[theorem]{Lemma}
\newtheorem{corollary}[theorem]{Corollary}
\newtheorem{proposition}[theorem]{Proposition}
\theoremstyle{remark}
\newtheorem{remark}[theorem]{Remark}
\theoremstyle{definition}
\newtheorem{definition}[theorem]{Definition}
\newtheorem{assumption}[theorem]{Assumption}
\def\d{{\partial}}
\def\Eq#1#2{\mathop{\sim}\limits_{#1\rightarrow#2}}
\def\Tend#1#2{\mathop{\longrightarrow}\limits_{#1\rightarrow#2}}
\def\({\left(}
\def\){\right)}
\def\<{\left\langle}
\def\>{\right\rangle}
\def\Sch{{\mathcal S}}% Schwartz space
\numberwithin{equation}{section}
\begin{document}
\title[Norm inflation  for NLS  in Fourier-Lebesgue and modulation spaces]
{Norm inflation for nonlinear Schr\"odinger equations   in
 Fourier-Lebesgue and modulation spaces of negative regularity} 
\author[D. Bhimani]{Divyang G. Bhimani}
\author[R. Carles]{R\'emi Carles}
\address{Univ Rennes, CNRS\\ IRMAR - UMR 6625\\ F-35000
  Rennes, France}
\email{divyang@tifrbng.res.in}
\email{Remi.Carles@math.cnrs.fr}

\thanks{The first author benefits from a fellowship by the Henri
  Lebesgue Center, which he wishes to thank. The first author also wishes to thank DST-INSPIRE and TIFR-CAM  for the academic leave. The second author is
  supported by Rennes M\'etropole through its AIS program.}

\subjclass[2010]{35Q55, 42B35 (primary), 35A01 (secondary)}
\keywords{Nonlinear Schr\"odinger equations; Ill-posedness;
  Fourier-Lebesgue spaces; modulation spaces} 

\maketitle
\begin{abstract}
  We consider nonlinear Schr\"odinger
  equations in Fourier-Lebesgue and modulation spaces involving
  negative regularity. The equations
  are posed on the whole space, and 
  involve a smooth power 
  nonlinearity.
  We prove two types of 
  norm inflation results. We first establish  norm inflation results below the
  expected critical regularities. We then prove norm inflation with
  infinite loss of regularity under less general assumptions. To do so, we recast
  the theory of multiphase weakly nonlinear geometric optics for
  nonlinear Schr\"odinger equations in a general abstract functional
  setting. 
\end{abstract}
%\tableofcontents

\section{Introduction}

\subsection{General setting}
\label{sec:general-setting}

We consider the  nonlinear Schr\"odinger (NLS) equations of the form 
\begin{equation}\label{nls}
i\partial_t \psi + \frac{1}{2} \Delta \psi = \mu |\psi|^{2\sigma} \psi,
  \quad x\in \R^d;\quad  \psi(0, x)=\psi_0(x),
\end{equation}
where $\psi=\psi(t,x)\in \mathbb C$, $\sigma \in \mathbb N$, $\mu\in
\{ 1, -1\}.$ We prove some ill-posedness results in Fourier-Lebesgue
and modulation spaces, involving negative regularity in space. We recall the notion of well-posedness in the sense of Hadamard.
\begin{definition} Let $X, Y \hookrightarrow \mathcal{S}'(\R^d)$ be a Banach spaces.  The Cauchy problem for \eqref{nls} is well posed from  $X$ to $Y$ if, for all bounded subsets $B\subset X$, there exist $T>0$ and a Banach space  $X_{T} \hookrightarrow C([0,T], Y)$ such that:
\begin{itemize}
\item[(i)] For all $\phi \in X$,  \eqref{nls}  has a unique solution $\psi\in X_T$ with  $\psi_{|t=0}=\phi$. 
\item[(ii)]  The mapping $\phi\mapsto \psi$ is continuous from $(B, \|\cdot\|_{X})$ to $C([0,T],Y).$
\end{itemize}
\end{definition}
The negation of the above definition is called a \textbf{lack of
  well-posedness} or \textbf{instability}. In connection with the
study of ill-posedness of \eqref{nls} and nonlinear wave
equations  Christ, Colliander, and Tao introduced in
\cite{christ2003ill}  the
notion of \textbf{norm inflation} with respect to a given (Sobolev)
norm, saying that there exist a sequence of smooth initial data
$(\psi_n(0))_{n\geq 1}$ and a sequence of times $(t_n)_{n\geq 1}$,
both converging to $0$, so that the corresponding smooth solution
$\psi_n$, evaluated at $t_n$, is unbounded (in the same space).
\smallbreak 

The solutions  to \eqref{nls} is invariant  under  the scaling transformation
\begin{equation}
  \label{eq:scaling}
  \psi(t,x) \mapsto \lambda^{1/\sigma} \psi \(\lambda^2 t, \lambda x\), \quad \lambda>0. 
\end{equation}
The homogeneous Sobolev space $\dot H^{s}(\R^d)$ is invariant exactly for
$s=s_c$, where
\[ s_c= \frac{d}{2} -\frac{1}{\sigma}.\]
Another important invariance of \eqref{nls} is the
Galilean invariance: if $\psi(t,x)$ solves \eqref{nls}, then so does
\begin{equation}
  \label{eq:galilean}
  e^{iv\cdot x -i |v|^2 t/2}\psi(t,x-vt)
\end{equation}
for any $v \in \R^d$. This transform does not alter the $L^2(\R^d)$
norm of the function. From these two invariances, well-posedness is not
expected to hold  in  $H^s(\R^d)$ as soon as $s<\max(0,s_c)$. In this paper,
we consider the case of negative regularity, $s<0$, in
Fourier-Lebesgue and modulation spaces, instead of Sobolev spaces.
\smallbreak

Kenig, Ponce and Vega \cite{KPV01}  established instability
for the cubic NLS in $H^{s}(\R)$ for $s<0$.   Christ, Colliander
and Tao \cite{christ2003ill} generalized this result in $H^{s}(\R^d)$
for $s<0$ and $d\ge 1$.  In the periodic case $x\in \T^d$, instability  in
$H^{s}(\T^d)$ for $s<0$ was
established in \cite{CCTper} ($d=1$) and \cite{carles2010multiphase}
($d\ge 1$). Stronger results for the cubic NLS on the circle were
proven by Molinet \cite{Molinet}. In \cite[Theorem 1.1]{oh2017remark},
Oh established  norm-inflation 
for  \eqref{nls} in the cubic case $\sigma=1$, in $H^{s}(\mathbb
R)$ for $s\leq  -1/2$ and  in $H^s(\R^d)$ for $s<0$ if $d\geq 2.$ He
actually proved that the flow map fails to be continuous at any
function in $H^s$, for $s$ as above. Norm inflation in the case of
mixed geometries, $x\in \R^d\times \T^n$, for sharp negative Sobolev
regularity in \eqref{nls},  is due to Kishimoto
\cite{kishimoto2019}, who also considers nonlinearities which are not
gauge invariant. 
\smallbreak

The general picture to prove ill-posedness results is typically as
following, as explained in e.g. \cite{christ2003ill}:  at negative
regularity, one relies on a transfer from 
high frequencies to low frequencies, while to prove ill-posedness at
positive regularity, one uses a transfer from
low frequencies to high frequencies. In particular, the proofs are
different whether a negative or a positive regularity is considered.
\smallbreak

Stronger phenomena than norm inflation have also been proved, showing
that the flow map fails to be continuous at the origin from $H^s$ to
$H^k$ even for (some) $k<s$, and so a loss of regularity is present.
This was proven initially  for $0<s<s_c$ by Lebeau
\cite{Lebeau05} in
the case of the wave equation, then in \cite{CaARMA} (cubic nonlinearity)
and \cite{ACMA,ThomannAnalytic} for NLS. In the case of negative
regularity, an infinite loss of regularity was established in
\cite{carles2012geometric} for \eqref{nls} in $H^s(\R^d)$ ($d\ge 2$
and $s<-1/(2\sigma+1)$), and in the periodic case $x\in \T^d$ in
   \cite{carles2017norm}, in  Fourier-Lebesgue  spaces. Typically,
   the NLS flow map fails to be continuous at the origin from 
$H^s(\R^d)$ to $H^k(\R^d)$, for any $k\in \R$. 

\subsection{Fourier-Lebesgue spaces}

The Fourier-Lebesgue space $\mathcal{F}L_s^p(\mathbb R^d)$ is defined by 
\[\mathcal{F}L_s^p(\mathbb R^d)= \left\{f\in \mathcal{S}'(\mathbb
  R^d): \|f\|_{\mathcal{F}L_s^{p}}:= \| \hat{f} \langle \cdot
  \rangle^s  \|_{L^{p}}< \infty \right\},\]
where the Fourier transform is defined as
\begin{equation*}
  \hat f(\xi)= \F f(\xi) = \frac{1}{(2\pi)^d}\int_{\R^d}e^{-ix\cdot
    \xi}f(x)dx,\quad f\in \Sch(\R^d),
\end{equation*}
and where $1\leq p \leq \infty$, $s\in \R$, and $\langle \xi
\rangle^{s} = (1+ |\xi|^2)^{s/2}$  ($\xi \in \mathbb R^d$). For $p=2$,
$\mathcal{F}L_s^2=H^s$ the usual Sobolev space.
For $s=0$,
we write $\mathcal{F}L^p_0(\R^d) = \mathcal{F}L^p(\R^d).$ 

\smallbreak

The scaling \eqref{eq:scaling} leaves the \emph{homogeneous}
$\mathcal{F}\dot L_s^p(\mathbb
R^d)$-norm (replace the Japanese bracket $\<\cdot\>^s$ with the length
$|\cdot|^s$ in the definition of  $\mathcal{F}L_s^p(\mathbb
R^d)$) invariant for $s=s_c(p)$, where
\[ s_c(p):= d \left( 1-\frac{1}{p} \right) -\frac{1}{\sigma}.\]
Of course when $p=2$, we recover the previous value $s_c$. On the
other hand, the Galilean transform \eqref{eq:galilean} does not alter
the $\F L^p(\R^d)$ norm of $\psi$, and so well-posedness is not
expected to hold  in  $\F L^p_s$ for $s<\max (0,s_c(p))$. Note however
that the recent results from \cite{HGKV-p} show that this heuristical argument
is not always correct: in the case $p=2$, $d=1=\sigma$, well-posedness may hold for
$\min  (0,s_c(2))=-\tfrac{1}{2}<s<\max (0,s_c(2))=0$. Therefore,
$s<\min (0,s_c(p))$ is 
a safer assumption to obtain ill-posedness results. In this
paper, we consider cases where $s<0$.
\smallbreak

In \cite[Theorem 1]{hyakuna2012existence}, Hyakuna-Tsutsumi
established local well-posedness for  the cubic  NLS  in
$\mathcal{F}L^p(\R^d)$  for $p \in (4/3, 4)\setminus 
\{2\}.$ Later this result is generalized in  \cite[Theorem
1.1]{hyakuna2018multilinear}  for 
$p\in [1,2].$  
\smallbreak

Our first results concern norm inflation of the type discussed above:
\begin{theorem}\label{theo:inflationFLp}
  Assume that   $1\leq p \leq \infty, d, \sigma \in \mathbb N$ and
  $s< \min \(0, s_c(p)\)$.  For any $\delta>0,$ there exists $\psi_0 \in \mathcal{F}L^p_s(\R^d)$ and $T>0$ satisfying  
\begin{equation*}
\| \psi_0 \|_{\mathcal{F}L^p_s} < \delta \quad \text{and} \quad   0<T< \delta,
\end{equation*}
such that the corresponding solution $\psi$  to \eqref{nls} exists on  $[0, T]$ and 
\[\|\psi(T)\|_{\mathcal{F}L^p_s}> \delta^{-1}. \]
\end{theorem}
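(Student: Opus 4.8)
The plan is to realize the high-to-low frequency transfer mechanism explicitly, through the first genuinely nonlinear term of the Duhamel iteration (this is the scheme of Oh and Kishimoto). Writing the solution of \eqref{nls} as the series of Picard iterates, $\psi(t)=\sum_{k\geq 1}\Psi_k(t)$, where $\Psi_1(t)=e^{it\Delta/2}\phi$ is the free evolution of the datum $\phi$ and each $\Psi_k$ is the $k$-fold Duhamel integral (a $((2\sigma+1)(k-1)+1)$-linear expression in $\phi$), I would locate the inflation entirely in
\[
\Psi_2(t)=-i\mu\int_0^t e^{i(t-\tau)\Delta/2}\,\bigl|\Psi_1(\tau)\bigr|^{2\sigma}\Psi_1(\tau)\,d\tau,
\]
and bound the remaining terms by comparison. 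Note that the linear term is harmless for free: since $\widehat{e^{it\Delta/2}\phi}(\xi)=e^{-it|\xi|^2/2}\hat\phi(\xi)$, the unimodular factor does not see the weight, so $\|\Psi_1(T)\|_{\mathcal{F}L^p_s}=\|\phi\|_{\mathcal{F}L^p_s}$ stays small. The conclusion will follow from $\|\psi(T)\|_{\mathcal{F}L^p_s}\geq \|\Psi_2(T)\|_{\mathcal{F}L^p_s}-\|\Psi_1(T)\|_{\mathcal{F}L^p_s}-\sum_{k\geq 3}\|\Psi_k(T)\|_{\mathcal{F}L^p_s}$ once the middle (large) term dominates.

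For the datum I would take a superposition of $O(M)$ modulated bumps,
\[
\phi(x)=a\sum_{\xi\in\Sigma}e^{i\xi\cdot x}\,\chi(x),
\]
with $\chi\in\Sch(\R^d)$ fixed, amplitude $a$, and carrier frequencies in a set $\Sigma\subset\Z^d$ of cardinality $\sim M$, all of modulus $\sim n$ with $n\gg1$. Because the weight is evaluated at high frequency, $\|\phi\|_{\mathcal{F}L^p_s}\sim a\,M^{1/p}n^{s}$ is small (here $s<0$) even for large $a$. The set $\Sigma$ is engineered so that a maximal number of resonant tuples $(\xi_1,\dots,\xi_{\sigma+1};\eta_1,\dots,\eta_\sigma)\in\Sigma^{2\sigma+1}$ satisfy the \emph{space}-resonance $\sum_i\xi_i-\sum_j\eta_j=O(1)$, producing output at frequency $O(1)$; in dimension $d\geq 2$ one may additionally enforce the \emph{time}-resonance $\sum_i|\xi_i|^2-\sum_j|\eta_j|^2=O(1)$ by near-orthogonality, while for $d=1$ only a large phase $\sim n^2$ is available. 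The frequency-$O(1)$ part of $|\Psi_1|^{2\sigma}\Psi_1$ is then a coherent sum over $\sim M^{2\sigma}$ such tuples, each of size $a^{2\sigma+1}$; applying the propagator (harmless on frequencies of size $O(1)$) and integrating in time yields a low-frequency component of $\Psi_2(T)$ of size $\sim a^{2\sigma+1}M^{2\sigma}\,\Theta(T)$, with $\Theta(T)\sim T$ in the time-resonant case and $\Theta(T)\sim n^{-2}$ otherwise (whence the choice $T\sim n^{-2}$ in the latter regime). Since this lives where the weight is $\sim 1$, it contributes in full to $\|\Psi_2(T)\|_{\mathcal{F}L^p_s}$.

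It then remains to dominate the tail $\sum_{k\geq 3}\|\Psi_k(T)\|_{\mathcal{F}L^p_s}$ by a convergent geometric series whose ratio is the effective coupling $\sim a^{2\sigma}M^{2\sigma}\Theta(T)$; because every iterate is supported on the finite iterated sumset of $\Sigma$, these multilinear bounds can be obtained by explicit frequency counting rather than by abstract products in $\mathcal{F}L^p_s$ (which is what keeps the argument uniform over $1\leq p\leq\infty$ and over all $\sigma$). One finally selects $a$, $M$, $n$, $T$ so that, simultaneously, $\|\phi\|_{\mathcal{F}L^p_s}<\delta$, $0<T<\delta$, the coupling is $\lesssim 1$ (guaranteeing that the smooth local solution exists on $[0,T]$ and that $\Psi_2$ dominates the tail), and the lower bound exceeds $\delta^{-1}$; the arithmetic of these constraints closes off exactly when $s<\min\bigl(0,s_c(p)\bigr)$. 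The main obstacle is precisely this simultaneous optimization: one must combine the sharpest possible resonance count with phase coherence and with the remainder estimate, uniformly in $p$ and $\sigma$, so as to reach the \emph{sharp} exponent $s_c(p)$ rather than a suboptimal one. This is most delicate in the regimes where no time-resonant configuration exists (notably $d=1$, and small $p$), where one only controls a phase of size $\sim n^2$ and must compensate by shortening $T$ and simultaneously enlarging $M$ to recover the lost powers of $n$.
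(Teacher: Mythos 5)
Your proposal follows essentially the same route as the paper: the Picard/power-series expansion $\psi=\sum_k U_k[\psi_0]$ from Bejenaru--Tao and Kishimoto, domination of the series by the first nonlinear iterate $U_{2\sigma+1}[\psi_0]$, a high-to-low frequency cascade for the lower bound, and a geometric-series estimate for the tail, with the same final numerology. The paper's concrete realization resolves the obstacles you flag at the end: it takes $\widehat{\psi_0}=RA^{-d/p}N^{-s}\chi_\Omega$ with $\Omega$ the union of just three cubes of side $A$ centered at $\{Ne_d,-Ne_d,2Ne_d\}$ (so your resonance count $M^{2\sigma}$ is simply the convolution volume $(A^d)^{2\sigma}$, no combinatorial engineering of $\Sigma$ is needed), phase coherence is automatic because $\widehat{\psi_0}\ge 0$ and $T\ll N^{-2}$ forces every time integral to have real part $\ge T/2$ (no time-resonance is exploited in any dimension), and the closing choice $R=(\log N)^{-1}$, $A\sim(\log N)^{-(2\sigma+2)/|s|}N$, $T=(A^{d(1/p-1)}N^s)^{2\sigma}$ uses $s<s_c(p)$ precisely to guarantee $T\ll N^{-2}$ and $s<0$ to get $\|\langle\cdot\rangle^s\|_{L^p(Q_A)}\gtrsim A^{d/p+s}$.
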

As discussed above, in the case $s_c(p)>0$, norm inflation is expected
in $ \mathcal{F}L^p_s(\R^d)$ for $0<s<s_c(p)$, but with different
arguments. The proof of Theorem~\ref{theo:inflationFLp} is inspired by
the two-scale analysis of Kishimoto \cite{kishimoto2019}. We also
prove norm inflation with an infinite loss of regularity: the initial 
 regularity must be sufficiently small, and we leave out the cubic
 one-dimensional nonlinearity.

\begin{theorem}\label{theo:fourier-lebesgue}
 Let $\sigma\in \N$,  $s< -\frac{1}{2 \sigma
    +1}$ and assume $d\sigma \geq 2.$   There exist a  sequence of
  initial data  $\(\psi_n(0)\)_{n\geq 1}$ in  $ \Sch(\R^d)$ such that 
\[ \|\psi_n(0)\|_{ \mathcal{F}L^p_{s}} \Tend n \infty 0, \quad \forall
  p\in [1,\infty],\]
and a sequence of times  $t_n \to 0$ such that the corresponding solutions $\psi_n$ to \eqref{nls} satisfies
\[ \|\psi_n(t_n)\|_{\mathcal{F}L^p_{k}} \Tend n \infty \infty, \quad
  \forall  k\in \mathbb R,\ \forall
  p\in [1,\infty] .\]
\end{theorem}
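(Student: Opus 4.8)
The plan is to implement the program announced in the abstract: use the scaling \eqref{eq:scaling} to pass to a semiclassical regime governed by multiphase weakly nonlinear geometric optics, and then exploit the transfer of energy from high to low frequencies that is characteristic of negative regularity. Fix a small parameter $\eps>0$ and look for the solution in the rescaled form $\psi(t,x)=\eps^{-\theta}u^\eps(t/\eps^{a},x/\eps^{b})$, with exponents $a,b,\theta$ dictated by \eqref{eq:scaling} so that $u^\eps$ solves the semiclassical equation $i\eps\partial_\tau u^\eps+\tfrac{\eps^2}{2}\Delta u^\eps=\mu|u^\eps|^{2\sigma}u^\eps$, subject to WKB data $u^\eps(0,y)=\sum_{j\in J}a_j(y)e^{ik_j\cdot y/\eps}$ with $J$ finite and $a_j\in\Sch(\R^d)$. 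After undoing the scaling, $\psi(0)$ oscillates at a high frequency $\sim\eps^{-(b+1)}$, so for $s<0$ the weight $\langle\xi\rangle^s$ discounts it and $\|\psi(0)\|_{\F L^p_s}\to0$ as $\eps\to0$, simultaneously for every $p\in[1,\infty]$; the free exponent $\theta$ is then calibrated to the threshold $s<-\tfrac1{2\sigma+1}$.

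Next I would establish the geometric optics approximation abstractly, that is in a Banach algebra $\mathcal A\hookrightarrow\Sch'(\R^d)$ which is stable under pointwise products and under the modulations $f\mapsto e^{ik\cdot y/\eps}f$, and to which the relevant Fourier--Lebesgue (and modulation) spaces belong. The conclusion should be that on a fixed rescaled interval $[0,\tau_0]$ the exact solution satisfies $u^\eps(\tau)=\sum_{\ell}b_\ell(\tau,\cdot)e^{i\phi_\ell(\tau,\cdot)/\eps}+o(1)$ in $\mathcal A$, uniformly in $\eps$, where the phases $\phi_\ell$ solve the free eikonal equation and are generated from the $k_j\cdot y$ by the resonant interactions of $|u|^{2\sigma}u=u^{\sigma+1}\bar u^{\sigma}$, and the amplitudes $b_\ell$ solve the associated coupled transport system. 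Closing this nonlinear approximation in $\mathcal A$, with a remainder that stays negligible on $[0,\tau_0]$ uniformly in $\eps$ and survives the amplification produced by undoing the scaling, is the technical core of the proof and the step I expect to be the main obstacle.

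I would then read off the high-to-low cascade from the profile system. The hypothesis $d\sigma\geq2$ is precisely what provides nontrivial resonances: one can select the initial frequencies $k_j$ so that some resonant combination, with $\sigma+1$ incoming and $\sigma$ outgoing frequencies, $\sum_{i=1}^{\sigma+1}k_{m_i}-\sum_{i=1}^{\sigma}k_{n_i}$, obeys the Schr\"odinger resonance relation yet outputs a frequency of size $O(1)$ in the rescaled variable, i.e. a genuinely low frequency for $\psi$. In the excluded case $d=\sigma=1$ the resonance relation has only trivial solutions, which is exactly why the cubic one-dimensional nonlinearity is set aside. Feeding such a resonance into a low-frequency mode, the corresponding amplitude grows on $[0,\tau_0]$, and with $\theta$ tuned to $s<-\tfrac1{2\sigma+1}$ the low-frequency Fourier mass of $\psi(t)$ diverges as $\eps\to0$.

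Finally, set $t_n=\eps_n^{a}\tau_0\to0$ along a sequence $\eps_n\to0$ (the calibration forces $a>0$) and combine the estimates. Since $\langle\xi\rangle\geq1$, the norm $\|\cdot\|_{\F L^p_k}$ is nondecreasing in $k$, so it suffices to prove blow-up as $k\to-\infty$; this regime is governed by the low-frequency content, whose weight $\langle\xi\rangle^k$ is $\approx1$ near $\xi=0$ for every $k$. Hence the diverging low-frequency mass yields $\|\psi_n(t_n)\|_{\F L^p_k}\to\infty$ for all $k\in\R$ and all $p\in[1,\infty]$, while $\|\psi_n(0)\|_{\F L^p_s}\to0$. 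The uniformity over $p$ and $k$ is automatic, because the profile analysis is performed once in the common algebra $\mathcal A$, the dependence on $p$ and $k$ entering only through the elementary computation of the norms of modulated wave packets.
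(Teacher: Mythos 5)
Your overall architecture is the one the paper follows (semiclassical rescaling, multiphase geometric optics in a translation- and modulation-invariant Banach algebra, a resonance with $\sigma+1$ incoming and $\sigma$ outgoing modes creating a zero mode, smallness of the data in $\F L^p_s$ for $s<0$ because it is purely high-frequency, divergence for all $k$ because the created mode lives at frequency $O(1)$). But there is a genuine gap at the step you yourself flag as the main obstacle, and it is not a technicality: the semiclassical equation you reduce to, $i\eps\partial_\tau u^\eps+\tfrac{\eps^2}{2}\Delta u^\eps=\mu|u^\eps|^{2\sigma}u^\eps$, has an $O(1)$ nonlinearity relative to $i\eps\partial_\tau$. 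That is the \emph{supercritical} WKB regime, where multiphase geometric optics cannot be justified by a Banach-algebra/Gronwall argument: Duhamel for $w=u^\eps-u_{\rm app}$ then carries a factor $\eps^{-1}$ in front of the nonlinear difference, and the approximation closes only on rescaled time intervals of length $O(\eps)$, which gives nothing after undoing the scaling (justification in that regime is the hard problem of \cite{CaARMA,ACMA,ThomannAnalytic}, at positive regularity and with analytic techniques). The paper avoids this by rescaling only time and amplitude, $u^{\eps}(t,x)=\eps^{\frac{2-J}{2\sigma}}\psi(\eps t,x)$, so that the nonlinearity appears with a factor $\eps^J$, $1<J<2$ (``very weakly nonlinear'' optics). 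Then the error estimate closes with remainder $O(\eps)$, the created zero mode has size $\eps^{J-1}$ (which dominates $O(\eps)$ precisely because $J<2$), and everything is done once in $X=\F L^1\cap\F L^\infty$.

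The second, related gap is that you never derive the threshold $s<-\tfrac{1}{2\sigma+1}$: you say $\theta$ is ``calibrated'' to it, but in your supercritical reduction there is no free parameter left to calibrate (scaling forces $\theta=(a+1)/(2\sigma)$, $a-2b=1$). In the paper the threshold is exactly the outcome of optimizing over $J$: smallness of $\|\psi(0)\|_{\F L^p_s}\lesssim\eps^{\frac{J-2}{2\sigma}+|s|}$ requires $|s|>\frac{2-J}{2\sigma}$, while unboundedness of $\|\psi(\eps\tau)\|_{\F L^p_k}\gtrsim\eps^{\frac{J-2}{2\sigma}}\cdot\eps^{J-1}$ requires $J<\frac{2\sigma+2}{2\sigma+1}$; these two constraints are simultaneously satisfiable exactly when $|s|>\frac{1}{2\sigma+1}$. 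Without introducing the extra exponent $J$ and running this competition, the statement's hypothesis on $s$ cannot be recovered. Your remaining points (the role of $d\sigma\ge2$ in producing a nontrivial resonance onto the zero mode, the monotonicity of $\|\cdot\|_{\F L^p_k}$ in $k$ and the fact that the zero mode carries weight $\langle\xi\rangle^k\approx1$ near $\xi=0$ for every $k$) are correct and match the paper.
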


\begin{remark}
   There is no general comparison between the assumptions on $s$ in
   Theorems~\ref{theo:inflationFLp} and \ref{theo:fourier-lebesgue}:
   for $p=1$, $\min(0,s_c(1))=-1/\si<-1/(2\si+1)$, while if $s_c(p)\ge
   0$, we obviously have $\min(0,s_c(p))=0>-1/(2\si+1)$.
 \end{remark}

\subsection{Modulation spaces}
\label{sec:modulation-spaces}

We now turn our attention to the theory of modulation spaces. 
The idea of modulation spaces is to  consider the decaying properties
of space variable and its Fourier transform
simultaneously. Specifically, we consider    the   short-time Fourier
transform (STFT)  (sliding-window transform/wave packet transform) of
$f$ with respect to   Schwartz class function $g\in
\mathcal{S}(\R^d)$: 
\begin{equation*}
V_{g}f(x,\xi)= \int_{\mathbb R^{d}} f(t) \overline{g(t-x)} e^{- i
  \xi\cdot t}dt, \quad (x, \xi) \in \mathbb R^{2d}, 
\end{equation*}
 whenever the integral exists.
Then the modulation spaces $M^{p,q}_s(\R^d)$  ($1 \leq p, q \leq
\infty$, $s\in \R)$ is defined as the collection of tempered
distributions $f\in \mathcal{S}'(\R^d)$  such
that
\[\|f\|_{M^{p,q}_s}= \left\|\|V_gf\|_{L^p_x}
    (1+|\xi|^2)^{s/2}\right\|_{L^q_{\xi}} < \infty,\]
with natural modification if a Lebesgue index is infinite.  
For $s=0,$ we write $M^{p,q}_0(\R^d)= M^{p,q}(\R^d).$ When $p=q=2,$
modulation spaces coincide   with usual Sobolev spaces $H^{s}(\R^d).$
For the last two decades, these spaces have made their own place in
PDEs and there is a  tremendous ongoing interest to use these
spaces as a   low regularity  Cauchy data class for nonlinear
dispersive  equations; see e.g. \cite{baoxiang2006isometric,
  benyi2009local, ruzhansky2012modulation,
  bhimani2019hartree,wang2007global, wang2011harmonic,
  OhWang2020}. Using  the algebra property and  boundedness of
Schr\"odinger propagator on $M^{p,q}_s(\R^d)$,  \eqref{nls}  is
proved to be locally well-posed in $M^{p,1}_s(\R^d)$ for $1\leq p  \leq \infty$,
$s\geq 0$,  and  in  $M^{p,q}_{s}(\R^d)$ for $1\leq p, q \leq \infty$
and  $s> d(1-1/q)$,
via fixed point argument; see  \cite{baoxiang2006isometric, benyi2009local,
  bhimani2016functions}.  Using uniform-decomposition techniques,
Wang and Hudzik  \cite{wang2007global}  established  global well-posedness
for \eqref{nls}  with small initial data in $M^{2,1}(\R^d).$ Guo
\cite{guo20171d} proved local well-posed  for the cubic NLS in  
$M^{2,q}(\R)$ $(2\leq q \leq \infty)$, and  later  Oh and Wang
\cite{OhWang2020},  established global existence  for this result. In
\cite{chaichenets2017existence},
Chaichenets et al.  established
global well-posedness for the cubic NLS in $M^{p, p'}(\R)$ for $p$
sufficiently close to $2$. The well-posedness problems for some other
PDEs in $M^{p,q}_s(\R^d)$ are widely studied by many authors, see for
instance the excellent survey  \cite{ruzhansky2012modulation}  and
references therein.  We complement the 
existing literature  on well-posedness theory for \eqref{nls} with
Cauchy data in modulation spaces. First, observe that, in view of
Proposition~\ref{ke} below,
\begin{equation*}
\left\| \psi \left (\lambda\, \cdot\right))\right\|_{M_s^{2,q}} \lesssim
\begin{cases}
\lambda^{- \frac{d}{2}} \max\(1,\lambda^s\)\|\psi\|_{M_s^{2,q}},  \quad \text{if} \quad  1\leq q \leq 2,\\
 \lambda^{-d \left( 1-\frac{1}{q} \right)} \max\(1,\lambda^s\)\|\psi\|_{M_s^{2,q}} ,\quad \text{if} \quad  2\leq q \leq \infty,
 \end{cases}
\end{equation*}
for all $  \lambda \leq 1$  and $s\in \R$. Invoking the general belief
that ill-posedness at positive regularity is due to the transfer from
low frequencies ($0<\lambda\ll 1$) to high frequencies, the scaling
\eqref{eq:scaling} suggests that ill-posedness occurs in $
M_s^{2,q}(\R^d)$ if
\begin{equation*}
  s<
  \begin{cases}
    s_c= \frac{d}{2}-\frac{1}{\si}\quad \text{if}\quad 1\le q\le 2,\\
    d\(1-\frac{1}{q}\)\quad \text{if}\quad 2\le q\le \infty.
  \end{cases}
\end{equation*}
The following analogue of Theorem~\ref{theo:inflationFLp} then appears
rather natural.
\begin{theorem}\label{theo:inflation-modulation}
  Let $  d, \sigma \in \mathbb N$ and  assume that 
\begin{itemize}
\item  $s< \min \(\frac{d}{2}-\frac{1}{\sigma}, 0 \)$ when $1\leq q \leq 2,$  and 
\item   $s< \min \(d\left(1- \frac{1}{q} \right)-\frac{1}{\sigma},
  0\)$  when $2\leq q \leq \infty.$ 
\end{itemize}
For any $\delta>0,$ there  exists  $\psi_0 \in M^{2,q}_s(\R^d)$  and $T>0$  satisfying
\[ \|\psi_{0} \|_{M^{2,q}_s} < \delta  \quad \text{and} \quad 0<T< \delta\] 
such  that  the corresponding solution $\psi$ to \eqref{nls} exists on $[0, T]$ and 
 \[\|\psi(T)\|_{M^{2,q}_s}>\delta^{-1}.\]
\end{theorem}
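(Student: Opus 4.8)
The plan is to follow the proof of Theorem~\ref{theo:inflationFLp}, testing the \emph{same} two-scale family of initial data against the modulation norm. I would take $\psi_0=R\sum_{n\in\Sigma}e^{i\xi_n\cdot x}\phi$ with a fixed Schwartz profile $\phi$, amplitude $R$, and a set $\Sigma$ of roughly $A$ frequencies of size $\sim\Lambda$, chosen exactly as in the Fourier--Lebesgue construction so that the resonant system $\sum_a\xi_{k_a}-\sum_b\xi_{m_b}=0$ has abundantly many solutions. Expanding the solution through its Duhamel/Picard series $\psi=\sum_{j\ge0}\psi_j$ with $U(t)=e^{i\frac t2\Delta}$, the mechanism is the high-to-low frequency cascade typical of negative regularity: the first nonlinear iterate $\psi_1(t)=-i\mu\int_0^t U(t-\tau)\bigl|U(\tau)\psi_0\bigr|^{2\sigma}U(\tau)\psi_0\,d\tau$ produces, through these resonances, a component concentrated near $\xi=0$, where the weight $\<\xi\>^s$ does not decay. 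It is this low-frequency component that inflates the $M^{2,q}_s$ norm.

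The proof then reduces to three estimates, the first two of which are where the modulation structure genuinely enters and must be redone relative to the Fourier--Lebesgue case. First, an upper bound on $\|\psi_0\|_{M^{2,q}_s}$, obtained by reading off the heights $R$, the weight $\sim\Lambda^s$, and the way the $A$ packets distribute among unit frequency cells in the $L^2_x$--$\ell^q_\xi$ norm. Second, a lower bound on the low-frequency part of $\psi_1(T)$, of size $\sim T R^{2\sigma+1}\mathcal N_0$, where $\mathcal N_0$ counts the resonant tuples surviving the dispersive phases over $[0,T]$; since this part lives near $\xi=0$ with weight $\sim1$, it contributes $\gtrsim T R^{2\sigma+1}\mathcal N_0$ to $\|\psi(T)\|_{M^{2,q}_s}$. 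Third, a remainder estimate showing $U(T)\psi_0$ and $\sum_{j\ge2}\psi_j$ are negligible against this term, together with local existence on $[0,T]$; here I would lean on the product/algebra estimates available in modulation spaces. Balancing $R,\Lambda,A,T$ then yields data of arbitrarily small norm, arbitrarily small $T$, and arbitrarily large $\|\psi(T)\|_{M^{2,q}_s}$.

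The split $1\le q\le2$ versus $2\le q\le\infty$, and the two thresholds $\tfrac d2-\tfrac1\sigma$ and $d(1-\tfrac1q)-\tfrac1\sigma$, are dictated by how the $M^{2,q}_s$ norm scales, as recorded in Proposition~\ref{ke}: the $L^2$ integration in $x$ contributes the exponent $d/2$, which dominates the $\ell^q_\xi$ contribution $d(1-1/q)$ exactly when $q\le2$ and is dominated by it when $q\ge2$, so that the critical exponent is $d\max(\tfrac12,1-\tfrac1q)-\tfrac1\sigma$. Concretely, this is governed by how the $A$ frequencies are arranged: using a wide envelope to pack them into $O(1)$ cells makes the data norm scale like $R\Lambda^sA^{1/2}$ (the Sobolev/$L^2$ regime, giving $\tfrac d2-\tfrac1\sigma$), while spreading them over many cells makes it scale like $R\Lambda^sA^{1/q}$ (giving $d(1-\tfrac1q)-\tfrac1\sigma$); one selects the arrangement favorable to the given $q$. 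For $q=2$ the statement is just Theorem~\ref{theo:inflationFLp} with $p=2$, since $M^{2,2}_s=H^s$; and for $q\le2$ the solution lower bound may be obtained for free from the embedding $M^{2,q}_s\hookrightarrow H^s$ (so that $\|\psi(T)\|_{M^{2,q}_s}\gtrsim\|\psi(T)\|_{H^s}$), leaving only the data upper bound to be verified in the stronger $M^{2,q}_s$ norm.

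The main obstacle is making these two modulation-space bounds sharp enough to reach the full range $s<\min(0,s_c)$ rather than a strictly smaller one. The delicate interplay is between the count $A$, the magnitude $\Lambda$, and the dispersive phases $e^{-i\tau|\xi_n|^2/2}$: the resonant count $\mathcal N_0$ must remain large \emph{after} imposing that the phases do not oscillate away over $[0,T]$ (which typically forces $T\sim\Lambda^{-2}$), and it must be balanced against the $A^{1/q}$ (resp. $A^{1/2}$) cost paid by the data so that the power of $\Lambda$ is positive precisely below the claimed threshold. The second technical crux is controlling $\sum_{j\ge2}\psi_j$ in $M^{2,q}_s$ — ensuring the expansion is really dominated by $\psi_1$ on $[0,T]$ — which is where the multilinear estimates in modulation spaces are needed.
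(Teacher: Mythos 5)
Your overall strategy coincides with the paper's: the same Kishimoto-type two-scale data (the paper uses three Fourier packets of width $A$ at frequency $\sim N$ rather than ``$\sim A$ separated frequencies'', but since you defer to the Fourier--Lebesgue construction the roles are identical), the Picard expansion $\psi=\sum_k U_k[\psi_0]$ in the auxiliary algebra $M_A$, domination of the series by the first nonlinear iterate $U_{2\sigma+1}[\psi_0]$, a lower bound on that iterate coming from the resonant output filling the low-frequency cube $Q_A$ where $\<\xi\>^s\sim 1$, and the balance $R=(\log N)^{-1}$, $A\sim N(\log N)^{-(2\sigma+2)/|s|}$, $T\ll N^{-2}$. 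The case $2\le q\le\infty$ works exactly as you describe: normalizing $\widehat{\psi_0}=RA^{-d/q}N^{-s}\chi_\Omega$ spreads the data over $\sim A^d$ unit frequency cells so that $\|\psi_0\|_{M^{2,q}_s}\sim R$, while the output gains the factor $\|(1+|n|)^s\|_{\ell^q(|n|\le A)}\gtrsim A^{d/q+s}$.

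The gap is in your treatment of $1\le q<2$, where your two suggestions pull against each other and against the mechanism. (i) Packing the frequencies into $O(1)$ unit cells does force $\|\psi_0\|_{M^{2,q}_s}\sim\|\psi_0\|_{H^s}$, but it also shrinks the support of the resonant output to an $O(1)$ neighbourhood of the origin, so the gain $\|(1+|n|)^s\|_{\ell^q(|n|\le A)}\sim A^{d/q+s}$ --- which is the entire source of the inflation and requires $A\to\infty$ --- disappears; what survives only yields inflation on a much more restrictive range of $s$, nowhere near $\frac d2-\frac1\sigma$. (ii) Keeping the wide packets and passing the lower bound through $\|\psi(T)\|_{M^{2,q}_s}\gtrsim\|\psi(T)\|_{H^s}$ is legitimate, but then the data bound becomes the obstruction: a packet of width $A$ with equidistributed Fourier mass over $\sim A^d$ unit cells satisfies $\|\psi_0\|_{M^{2,q}_s}\sim A^{d(1/q-1/2)}\|\psi_0\|_{H^s}$, a polynomial factor in $N$, whereas the construction only produces a logarithmic inflation margin; re-tuning $R$ to absorb this factor appears to degrade the admissible range to $s<d(1-\frac1q)-\frac1\sigma$, which for $q<2$ is strictly smaller than claimed. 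The paper's own route for $q<2$ is to normalize by $A^{-d/2}$ and estimate the data directly via the frequency-uniform decomposition, asserting $\|\psi_0\|_{M^{2,q}_s}\lesssim\|\psi_0\|_{M^{2,1}_s}\lesssim R$; this is precisely the step your sketch leaves open, and it is the one that must be secured with care, since the naive cell count gives $\|\psi_0\|_{M^{2,1}_s}\sim RA^{d/2}$ for such data. Until that estimate (or a modified arrangement achieving both the small data norm and the wide low-frequency output) is pinned down, the $1\le q<2$ half of the theorem is not proved by your plan.
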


We also have some infinite loss of
regularity of the flow map \eqref{nls} at the level of modulation
spaces with negative regularity. We no longer assume $p=2$, and show a
stronger result, provided that the negative regularity $s$ is
sufficiently small, and (again) that we discard the one-dimensional cubic
case.
\begin{theorem}\label{theo:modulation}
  Let $\sigma\in \N$,  $s< -\frac{1}{2 \sigma
    +1}$ and assume $d\sigma \geq 2.$  There exists a  sequence of
  initial data  $\(\psi_n(0)\)_{n\geq 1}$ in  $\Sch(\R^d)$ such
  that  
\[ \|\psi_n(0)\|_{M_s^{p,q}} \Tend n \infty 0, \quad \forall p,q\in [1,\infty], \]
and a sequence of times  $t_n \to 0$ such that the corresponding solutions $\psi_n$ to \eqref{nls} satisfies
\[ \|\psi_n(t_n)\|_{M_{k }^{p,q}} \Tend n \infty \infty,
  \quad \forall k \in
  \mathbb R,\ \forall p,q\in [1,\infty].\] 
\end{theorem}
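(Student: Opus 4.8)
The plan is to obtain Theorem~\ref{theo:modulation} from the same abstract weakly nonlinear geometric optics mechanism that underlies Theorem~\ref{theo:fourier-lebesgue}, the only genuinely new input being the behavior of the modulation-space norms on the relevant high- and low-frequency building blocks. The guiding principle is the high-to-low frequency transfer characteristic of negative regularity: I would start from highly oscillatory, multiphase initial data whose $M^{p,q}_s$ norm is small because its frequency support sits at scale $\eps_n^{-1}$ and $s<0$, and then exhibit a resonant self-interaction of the nonlinearity that deposits an amplitude of order $a_n\to\infty$ at frequencies of size $O(1)$. Since a bump of unit frequency scale and amplitude $A$ has $M^{p,q}_k$ norm comparable to $A$ for every fixed $k,p,q$, such a low-frequency deposit forces $\|\psi_n(t_n)\|_{M^{p,q}_k}\Tend n \infty \infty$ simultaneously for all $k\in\R$ and all $p,q\in[1,\infty]$.

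Concretely, I would take data of the form
\[ \psi_n(0,x)=a_n\sum_{j=1}^J b(x)\,e^{i\kappa_j\cdot x/\eps_n}, \]
with $b\in\Sch(\R^d)$, $\eps_n\to 0$, amplitude $a_n\to\infty$, and a fixed family of frequencies $\kappa_1,\dots,\kappa_J$ chosen to be fully resonant for \eqref{nls}: some signed combination arising in $|\psi|^{2\sigma}\psi=\psi^{\sigma+1}\bar\psi^{\sigma}$ should satisfy both the spatial relation $\sum\pm\kappa_{j}=0$ and the temporal relation $\sum\pm|\kappa_j|^2=0$. The existence of such a nontrivial resonance generating the zero frequency is exactly what fails for $d=\sigma=1$ (for the cubic one-dimensional equation the two relations force $\kappa_a\perp\kappa_b$, hence a trivial configuration) and is available precisely under the hypothesis $d\sigma\ge 2$. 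The wave packets travel at the distinct group velocities $\kappa_j/\eps_n$ and therefore overlap only on a time interval of length $\sim\eps_n$; integrating the resonant, nonoscillatory part of the first Duhamel iterate over this interval produces a low-frequency profile of fixed spatial scale and amplitude $\sim\eps_n a_n^{2\sigma+1}$.

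The admissible range of $s$ then emerges from a two-parameter balance. Writing $a_n=\eps_n^{-\gamma}$, the frequency-shift covariance of the modulation norm (together with the dilation bounds of Proposition~\ref{ke} for the envelope scale) gives $\|\psi_n(0)\|_{M^{p,q}_s}\sim a_n\eps_n^{-s}=\eps_n^{-s-\gamma}$, which tends to $0$ provided $\gamma<-s$; on the other hand the low-frequency deposit yields $\|\psi_n(t_n)\|_{M^{p,q}_k}\gtrsim\eps_n a_n^{2\sigma+1}=\eps_n^{\,1-(2\sigma+1)\gamma}$, which tends to $\infty$ provided $\gamma>\tfrac1{2\sigma+1}$, with $t_n\sim\eps_n\to 0$. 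These two requirements are compatible for some $\gamma$ exactly when $-s>\tfrac1{2\sigma+1}$, i.e. $s<-\tfrac1{2\sigma+1}$, which is the hypothesis; and since $\tfrac1{2\sigma+1}<\tfrac1{2\sigma}$ one may moreover keep $\gamma<\tfrac1{2\sigma}$, so that the nonlinear phase $t_na_n^{2\sigma}\to 0$ and the construction stays in the genuinely weakly nonlinear regime where the first Picard iterate dominates.

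The main obstacle, as always in these arguments, is to upgrade this formal computation to a statement about the \emph{exact} solution of \eqref{nls}: one must show that the smooth solution issued from $\psi_n(0)$ persists up to $t_n$ and that the Duhamel remainder beyond the first iterate does not destroy the $O(\eps_n a_n^{2\sigma+1})$ low-frequency deposit, despite $a_n\to\infty$. I would handle this by passing to rescaled (semiclassical) variables in which the equation is weakly nonlinear and closing energy estimates there; this is precisely the content that the abstract functional formulation of multiphase geometric optics is meant to isolate, leaving as the only space-dependent ingredients (i) the upper bound on $\|\psi_n(0)\|_{M^{p,q}_s}$ and (ii) the lower bound on $\|\psi_n(t_n)\|_{M^{p,q}_k}$. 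Both reduce to estimating the modulation norm of a single high-frequency wave packet and of a single fixed-scale bump, which follow from the frequency-shift covariance of the $M^{p,q}$ norm and from Proposition~\ref{ke}; once these are in hand, Theorem~\ref{theo:modulation} follows from the abstract result exactly as Theorem~\ref{theo:fourier-lebesgue} does.
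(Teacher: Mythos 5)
Your proposal is correct and follows essentially the same route as the paper: highly oscillatory multiphase data, a resonance producing the zero mode (available exactly when $d\sigma\ge 2$), a semiclassical rescaling in which the first Picard iterate dominates, and the exponent balance that yields $s<-\frac{1}{2\sigma+1}$ — your parameter $\gamma$ is just $\frac{2-J}{2\sigma}$ in the paper's notation, and your two constraints $\gamma<-s$ and $\gamma>\frac{1}{2\sigma+1}$ are precisely conditions \eqref{eq:cond-s} and $J<\frac{2\sigma+2}{2\sigma+1}$. The remaining ingredients you defer to (the abstract WNLGO error estimate in a Banach algebra $X=\F L^1\cap M^{1,1}$, and the frequency-shift bound $\|fe^{ij\cdot x/\eps}\|_{M^{p,q}_s}\lesssim \eps^{|s|}\|f\|_{M^{p,q}_s}$ for $s\le 0$) are exactly Proposition~\ref{prop:errorWNLGO} and Lemma~\ref{lem:scaling-modulation}.
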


\begin{remark}
  Contrary to the Fourier-Lebesgue case, the assumption regarding $s$
  is always weaker in Theorem~\ref{theo:inflation-modulation} than in
  Theorem~\ref{theo:modulation} (recall that the cubic one-dimensional
  case is ruled out in  Theorem~\ref{theo:modulation}).
\end{remark}
\subsection{Comments and outline of the paper}

As pointed out before, the numerology regarding the norm inflation
phenomenon (Theorems~\ref{theo:inflationFLp} and
\ref{theo:inflation-modulation}) is probably sharp, up to the
fact that the minimum should be replaced by a maximum
in the assumption on $s$, and that  at positive regularity, different
arguments are required. On the other hand, we believe that the
restriction $s<-\frac{1}{2\sigma+1}$ in
Theorems~\ref{theo:fourier-lebesgue} and \ref{theo:modulation} is due
to  our approach, and  we expect  that the result is true under the
mere assumption  $s<0$ if $d\sigma\ge 2$, and for $s<-1/2$ if
$d=\sigma=1$. 
  
The analogue of  our results remains true  if we replace $\Delta$ by the
  generalized dispersion of the form 
$ \Delta_{\eta} = \sum_{j=1}^{d} \eta_{j}\partial^2_{x_j}, \eta_j=\pm1.$
The  \eqref{nls} associated $\Delta_{\eta}$ (with the non uniform
signs of  $\eta_j$)   arises in the description of surface gravity
waves on deep water, see e.g. \cite{SulemSulem}.

In  \cite{sugimoto2015local}, Sugimoto-Wang-Zhang established
  some local well-posedness results for  Davey-Stewartson equation in some
  weighted modulation spaces. We  note that  our method of proof can
  be applied to get  norm-inflation results for Davey-Stewartson
  equation,
  and infinite loss of regularity in
  the spirit of \cite{carles2012geometric},  in some negative
  modulation and Fourier-Lebesgue spaces.

Theorems~\ref{theo:fourier-lebesgue} and \ref{theo:modulation} cover
any smooth power nonlinearity in 
      multidimension, and power nonlinearities which are at least
      quintic in the one-dimensional case. Our method our proof seems
      too limited to prove loss of regularity in the case of
      the cubic nonlinearity on the line. It
      turns out that the method followed to treat the cubic
      nonlinearity on the circle in \cite{carles2017norm} seems
      helpless in the case of the line. On the other hand,
      Theorems~\ref{theo:inflationFLp} and
      \ref{theo:inflation-modulation} include the cubic
      one-dimensional Schr\"odinger equation.  

      \smallbreak

  The rest of this paper is organized as follows, In
  Section~\ref{sec:prelim}, we recall  various properties associated
  to modulation spaces. In Section~\ref{sec:inflationFLp}, we prove
  Theorem~\ref{theo:inflationFLp}, and we adapt the argument in
  Section~\ref{sec:inflation-modulation} to prove
  Theorem~\ref{theo:inflation-modulation}. In Section~\ref{sec:reduc},
  we show how the theory of weakly nonlinear geometric optics makes it
  possible to prove loss of regularity at negative regularity for
  \eqref{nls}. A general framework where multiphase weakly nonlinear
  geometric optics  is justified is presented in
  Section~\ref{sec:wnlgo}, and it is applied in
  Section~\ref{sec:conclusion} to prove
  Theorems~\ref{theo:fourier-lebesgue} and \ref{theo:modulation}. 

\subsubsection*{Notations}
The notation $A \lesssim B $ means $A \leq cB$ for a some constant $c > 0 $,
 Let $(\Lambda^{\eps})_{0<\eps \leq 1}$ and $(\Upsilon^{\eps})_{0<\eps \leq 1}$ be two families of  positive real numbers.
\begin{itemize}
\item  We write $\Lambda^{\eps}\ll \Upsilon^{\eps}$ if $\limsup_{\eps \to 0} \Lambda^{\eps}/\Upsilon^{\eps} =0.$
\item  We write $\Lambda^{\eps}\lesssim \Upsilon^{\eps}$ if $\limsup_{\eps \to 0} \Lambda^{\eps}/\Upsilon^{\eps} <\infty.$
\item  We write  $\Lambda^{\eps}\approx \Upsilon^{\eps}$  if  $\Lambda^{\eps}\lesssim \Upsilon^{\eps}$  and $\Upsilon^{\eps}\lesssim \Lambda^{\eps}$. 
\end{itemize}

\section{Preliminary: modulation spaces}\label{sec:prelim}  
 
 Feichtinger  \cite{feichtinger1983modulation} introduced a  class of
 Banach spaces,  the so-called modulation spaces,  which allow a
 measurement of space variable and Fourier transform variable of a
 function, or distribution, on $\mathbb R^d$ simultaneously, using the
 short-time Fourier transform (STFT). The  STFT  of a function $f$ with
 respect to a window function $g \in {\mathcal S}(\R^d)$ is defined by 
\begin{equation}\label{stft}
V_{g}f(x,y)= \int_{\mathbb R^{d}} f(t) \overline{g(t-x)} e^{- i y\cdot t}dt,  \  (x, y) \in \mathbb R^{2d},
\end{equation}
 whenever the integral exists.
For $x, y \in \R^d$, the translation operator $T_x$, and the modulation operator $M_y$, are
defined by $T_{x}f(t)= f(t-x)$ and $M_{y}f(t)= e^{ i y\cdot t} f(t).$ In terms of these
operators the STFT may be expressed as
\begin{equation}\label{fstft}
V_{g}f(x,y)=\langle f, M_{y}T_{x}g\rangle = e^{- i x \cdot w} \left(
  f\ast M_w g^*\right)(x)  ,
\end{equation}
 where $\langle f, g\rangle$ denotes the inner product for $L^2$ functions,
or the action of the tempered distribution $f$ on the Schwartz class function $g$, and $g^*(y) = \overline{g(-y)}.$  Thus $V: (f,g) \mapsto V_g(f)$ extends to a bilinear form on $\mathcal{S}'(\R^d) \times \mathcal{S}(\R^d)$, and $V_g(f)$ defines a uniformly continuous function on $\R^{d} \times \R^d$ whenever $f \in \mathcal{S}'(\R^d) $ and $g \in  \mathcal{S}(\R^d)$.
\begin{definition}[Modulation spaces]\label{ms} Let $1 \leq p,q \leq
  \infty$, $s \in \R$ and $0\neq g \in{\mathcal S}(\R^d)$. The  weighted  modulation space   $M_s^{p,q}(\R^d)$
is defined to be the space of all tempered distributions $f$ for which the following  norm is finite:
\[ \|f\|_{M_s^{p,q}}=  \left(\int_{\R^d}\left(\int_{\R^d}
      |V_{g}f(x,y)|^{p} dx\right)^{q/p} (1+|y|^2)^{sq/2} \,
    dy\right)^{1/q},\]
for $ 1 \leq p,q <\infty$. If $p$ or $q$ is infinite, $\|f\|_{M_s^{p,q}}$ is defined by replacing the corresponding integral by the essential supremum. 
\end{definition}
\begin{remark}
\label{equidm}
The definition of the modulation space given above, is independent of the choice of 
the particular window function.  See  \cite[Proposition 11.3.2(c)]{grochenig2013foundations}.
\end{remark}

We recall an alternative definition of modulation spaces via  the
frequency-uniform localization techniques, providing another
characterization which will be useful to prove
Theorem~\ref{theo:inflation-modulation}. Let  $Q_n$ be the unit cube with the center at  $n$, so $\( Q_{n}\)_{n \in \Z^d}$ constitutes a decomposition of  $\R^d,$ that is, $\R^d = \cup_{n\in \Z^{d}} Q_{n}.$
Let   $\rho \in \mathcal{S}(\R^d),$  $\rho: \R^d \to [0,1]$  be  a smooth function satisfying   $\rho(\xi)= 1 \  \text{if} \ \ |\xi|_{\infty}\leq \frac{1}{2} $ and $\rho(\xi)=
0 \  \text{if} \ \ |\xi|_{\infty}\geq  1.$ Let  $\rho_n$ be a translation of $\rho,$ that is,
\[ \rho_n(\xi)= \rho(\xi -n),\quad n \in \Z^d.\]
Denote 
\[\sigma_{n}(\xi)=
  \frac{\rho_{n}(\xi)}{\sum_{\ell\in\Z^{d}}\rho_{\ell}(\xi)}, \quad n
  \in \Z^d.\] 
Then   $\( \sigma_n(\xi)\)_{n\in \Z^d}$ satisfies the following properties:
\begin{equation}\label{fh}
  \left\{
    \begin{aligned}
      &|\sigma_{n}(\xi)|\geq c, \forall \xi \in Q_{n},\\
&\operatorname{supp} \sigma_{n} \subset \{\xi: |\xi-n|_{\infty}\leq 1 \},\\
&\sum_{n\in \Z^{d}} \sigma_{n}(\xi)\equiv 1, \forall \xi \in \R^d,\\
&|D^{\alpha}\sigma_{n}(\xi)|\leq C_{|\alpha|}, \forall \xi \in \R^d,
\alpha \in (\N \cup \{0\})^{d}. 
\end{aligned}
\right.
\end{equation}
The frequency-uniform decomposition operators can be exactly defined by 
\[\square_n = \mathcal{F}^{-1} \sigma_n \mathcal{F}. \]
For $1\leq p, q \leq \infty, s\in \R$,  it is known \cite{feichtinger1983modulation} that 
\begin{equation*}
\|f\|_{M^{p,q}_s}\asymp  \left( \sum_{n\in \mathbb Z^d} \left\lVert
  \square_n(f)\right\rVert^q_{L^p} (1+|n|)^{sq} \right)^{1/q}, 
\end{equation*}
with natural modifications for $p, q= \infty.$

\begin{lemma} [\cite{wang2011harmonic,
    grochenig2013foundations,ruzhansky2012modulation}]  \label{rl} Let
  $p,q, p_j, q_j\in [1, \infty]$, $s, s_j \in \R$ ($j=1,2$). Then
\begin{enumerate}
\item \label{ir} $M^{p_{1}, q_{1}}_{s_1}(\R^{d})
  \hookrightarrow M^{p_{2}, q_{2}}_{s_2}(\R^{d})$ whenever
  $p_{1}\leq p_{2}$ and $q_{1}\leq q_{2}$ and $s_2\leq s_1.$ In
  particular, $H^{s}(\R^d) \hookrightarrow M_s^{p,q}(\R^d)$ for $2\leq
  p,q \leq \infty$ and $s\in \R.$
  \item \label{uir}$M^{p_1, q_1}_{s_1} (\R^d) \hookrightarrow M^{p_2, q_2}_{s_2} (\R^d)$ for $q_1> q_2, s_1>s_2$ and $s_1-s_2> d/q_2-d/q_1.$ 
%\item \label{dh} $\mathcal{F}L^q(\R^d) \subset M^{2, q}(\R^d) $ for $2\leq q \leq \infty.$
\item \label{el} $M^{p,q_{1}}(\R^{d}) \hookrightarrow L^{p}(\R^{d}) \hookrightarrow M^{p,q_{2}}(\R^{d})$ holds for $q_{1}\leq \text{min} \{p, p'\}$ and $q_{2}\geq \text{max} \{p, p'\}$ with $\frac{1}{p}+\frac{1}{p'}=1.$
\item \label{rcs} $M^{\min\{p', 2\}, p}(\R^d) \hookrightarrow \mathcal{F} L^{p}(\R^d)\hookrightarrow M^{\max \{p',2\},p}(\R^d),  \frac{1}{p}+\frac{1}{p'}=1.$
\item $\mathcal{S}(\R^{d})$ is dense in  $M^{p,q}(\R^{d})$ if $p$ and $q$ are finite. 
\item  $M^{p,p}(\R^d) \hookrightarrow L^p(\R^d) \hookrightarrow M^{p,p'}(\R^d)$ for $1\leq p \leq 2$ and  $M^{p,p'}(\R^d)  \hookrightarrow L^p(\R^d) \hookrightarrow M^{p,p}(\R^d)$ for $2 \leq p \leq \infty.$
\item \label{fi} The Fourier transform $\mathcal{F}:M_s^{p,p}(\R^{d})\to M_s^{p,p}(\R^{d})$ is an isomorphism.
\item The space  $M_s^{p,q}(\R^{d})$ is a  Banach space.
\item \label{ic}The space $M_s^{p,q}(\R^{d})$ is invariant under
  complex conjugation. 
\end{enumerate}
\end{lemma}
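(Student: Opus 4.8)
The plan is to reduce every item to two ingredients already available: the discrete frequency-uniform characterization
\[
\|f\|_{M^{p,q}_s}\asymp\Big(\sum_{n\in\Z^d}\|\square_n f\|_{L^p}^q(1+|n|)^{sq}\Big)^{1/q},
\]
recalled above, together with the window independence of the norm (Remark~\ref{equidm}). The only analytic input beyond bookkeeping is a uniform Bernstein inequality: since each $\square_n f$ has Fourier support in the fixed-size cube $\{|\xi-n|_\infty\le 1\}$ by \eqref{fh}, and $\sigma_n$ is a translate of a single symbol, its convolution kernel has $L^1$-norm bounded independently of $n$, so that $\|\square_n f\|_{L^{p_2}}\lesssim\|\square_n f\|_{L^{p_1}}$ for $p_1\le p_2$ with constant uniform in $n$.

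For the inclusions \eqref{ir}, I would combine this Bernstein estimate (handling $p_1\le p_2$) with $\ell^{q_1}\hookrightarrow\ell^{q_2}$ for $q_1\le q_2$ and the pointwise bound $(1+|n|)^{s_2}\le(1+|n|)^{s_1}$ valid for $s_2\le s_1$; the embedding $H^s\hookrightarrow M^{p,q}_s$ for $2\le p,q\le\infty$ is then the special case $p_1=q_1=2$ together with $M^{2,2}_s=H^s$. The refined inclusion \eqref{uir}, where $q_1>q_2$, is the one place where summability in $n$ must be purchased with regularity: applying H\"older in the $n$-sum with the exponent $r$ defined by $1/r=1/q_2-1/q_1$, the residual weight sum $\sum_n(1+|n|)^{-(s_1-s_2)r}$ converges exactly when $(s_1-s_2)r>d$, i.e. when $s_1-s_2>d/q_2-d/q_1$, which is the hypothesis.

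The comparisons with Lebesgue and Fourier--Lebesgue spaces (\eqref{el}, \eqref{rcs} and the sixth item) reduce, through the discrete characterization and the Hausdorff--Young inequality applied on a single frequency cube, to nesting of $\ell^q$-spaces, and here I would simply invoke the cited references for the sharp constants. Density of $\Sch$ for finite $p,q$ follows by truncating the frequency sum to $|n|\le N$ and then regularizing and spatially truncating each (now band-limited) piece, both operations being continuous on $M^{p,q}$. Completeness is immediate, since the discrete characterization identifies $M^{p,q}_s$ isometrically with a closed subspace of the complete weighted mixed-norm space $\ell^q_s(L^p)$. Invariance under complex conjugation, \eqref{ic}, follows from the covariance identity $V_g(\bar f)(x,y)=\overline{V_{\bar g}f(x,-y)}$ combined with window independence: taking $\bar g$ as window and using that the weight is even in $y$ gives $\|\bar f\|_{M^{p,q}_s}=\|f\|_{M^{p,q}_s}$.

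The one item whose proof genuinely requires the continuous STFT definition rather than the discrete characterization is the Fourier mapping property \eqref{fi}, and I expect it to be the main obstacle. Here the tool is the fundamental identity of time-frequency analysis, $V_gf(x,y)=e^{-ix\cdot y}V_{\hat g}\hat f(y,-x)$ (up to the normalization of the paper), which interchanges the two phase-space variables; for the balanced index $p=q$ this exchange preserves the underlying $L^{p}(\R^{2d})$ integrability, and the window swap $g\mapsto\hat g$ is harmless by Remark~\ref{equidm}. The delicate point is the correct tracking of the weight under this exchange, for which I would rely on the cited references \cite{wang2011harmonic,grochenig2013foundations,ruzhansky2012modulation}; it is precisely this weight bookkeeping, rather than any deep estimate, that makes \eqref{fi} the least routine part of the lemma.
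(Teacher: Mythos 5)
The paper offers no proof of this lemma at all: it is a compendium of known facts attributed wholesale to \cite{wang2011harmonic,grochenig2013foundations,ruzhansky2012modulation}, so there is no in-paper argument to compare yours against. Your sketch is the standard route of those references --- the frequency-uniform characterization plus a uniform Bernstein inequality for \eqref{ir}, H\"older in the $n$-sum with $1/r=1/q_2-1/q_1$ for \eqref{uir}, identification of $M^{p,q}_s$ with a retract of the complete space $\ell^q_s(L^p)$ for completeness, and the covariance identity $V_g(\bar f)(x,y)=\overline{V_{\bar g}f(x,-y)}$ together with Remark~\ref{equidm} for \eqref{ic} --- and these parts are sound. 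One presentational gap: for \eqref{el}, \eqref{rcs} and the sixth item, Hausdorff--Young on a single frequency cube only yields the easy endpoints $q_1=1$ and $q_2=\infty$; the sharp thresholds $\min\{p,p'\}$ and $\max\{p,p'\}$ additionally require the almost-orthogonality of the pieces at $p=2$ and complex interpolation (or duality), so the reduction is not purely a nesting of $\ell^q$ spaces. Since you explicitly defer to the references there, as the paper itself does, this is not an error.

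Your suspicion about \eqref{fi} is the one point that deserves to be made precise, because the obstacle you sense is real. The fundamental identity gives $|V_{\hat g}\hat f(x,y)|=|V_gf(-y,x)|$, so the exchange of phase-space variables converts the frequency weight $\<y\>^s$ of Definition~\ref{ms} into a \emph{spatial} weight $\<x\>^s$; for $s=0$ this closes at once and gives the isomorphism $\mathcal F\colon M^{p,p}\to M^{p,p}$, but for $s\neq0$ the two weighted quantities are genuinely inequivalent norms. Indeed, for $f_\lambda(x)=e^{i\lambda\cdot x}\varphi(x)$ with $\varphi$ a fixed Gaussian one has $V_gf_\lambda(x,y)=V_g\varphi(x,y-\lambda)$, hence $\|f_\lambda\|_{M^{p,p}_s}\sim\<\lambda\>^s$, while $\widehat{f_\lambda}$ is a translate of $\widehat\varphi$ and so has $M^{p,p}_s$-norm independent of $\lambda$ by translation invariance; letting $|\lambda|\to\infty$ shows that $\mathcal F$ cannot be an isomorphism of $M^{p,p}_s$ for $s\neq0$ with the paper's frequency-only weight. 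The clean invariance statement holds for $s=0$, or for weights equivalent to functions invariant under the rotation $(x,y)\mapsto(y,-x)$, and $s=0$ is all the paper ever uses. So no amount of ``weight bookkeeping'' will rescue the swap argument for item \eqref{fi} as literally stated; the rest of your outline goes through.
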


\begin{theorem}[Algebra property] \label{theo:algebra} Let $p,q, p_{i}, q_{i}\in [1, \infty]$  $(i=0,1,2).$
If   $\frac{1}{p_1}+ \frac{1}{p_2}= \frac{1}{p_0}$ and $\frac{1}{q_1}+\frac{1}{q_2}=1+\frac{1}{q_0}, $ then
\begin{equation}\label{prm}
M^{p_1, q_1}(\mathbb R^{d}) \cdot M^{p_{2}, q_{2}}(\mathbb R^{d}) \hookrightarrow M^{p_0, q_0}(\mathbb R^{d});
\end{equation}
with norm inequality $\|f g\|_{M^{p_0, q_0}}\lesssim \|f\|_{M^{p_1, q_1}} \|g\|_{M^{p_2,q_2}}.$
In particular, the  space $M^{p,q}(\mathbb R^{d})$ is a pointwise
$\mathcal{F}L^{1}(\mathbb R^{d})$-module, that is, we have
\begin{equation*}\label{mp}
\|fg\|_{M^{p,q}} \lesssim \|f\|_{\mathcal{F}L^{1}} \|g\|_{M^{p,q}}.
\end{equation*} 
\end{theorem}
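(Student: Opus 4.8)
The plan is to prove the bilinear estimate \eqref{prm} by means of the frequency-uniform decomposition characterization of $M^{p,q}$ recalled above (taking $s=0$), which turns the problem into a discrete convolution inequality. Set $a_k=\|\square_k f\|_{L^{p_1}}$ and $b_\ell=\|\square_\ell g\|_{L^{p_2}}$, so that $\|f\|_{M^{p_1,q_1}}\asymp\|(a_k)_k\|_{\ell^{q_1}}$ and $\|g\|_{M^{p_2,q_2}}\asymp\|(b_\ell)_\ell\|_{\ell^{q_2}}$. The key structural observation is the frequency localization of products: since $\square_k f$ and $\square_\ell g$ have Fourier transforms supported in $\{|\xi-k|_\infty\le 1\}$ and $\{|\xi-\ell|_\infty\le 1\}$ respectively, the product $\square_k f\cdot\square_\ell g$ has Fourier transform supported in $\{|\xi-(k+\ell)|_\infty\le 2\}$. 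Consequently $\square_n(\square_k f\cdot\square_\ell g)=0$ unless $|n-(k+\ell)|_\infty\le 3$, so that
\[ \square_n(fg)=\sum_{|k+\ell-n|_\infty\le 3}\square_n\big(\square_k f\cdot\square_\ell g\big). \]

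First I would record that the operators $\square_n$ are bounded on $L^{r}$ uniformly in $n$ and $1\le r\le\infty$: this follows from Young's inequality together with \eqref{fh}, since the uniform support size and uniform derivative bounds on $\sigma_n$ give a uniform bound on $\|\mathcal F^{-1}\sigma_n\|_{L^1}$. Applying this with $r=p_0$ and then H\"older's inequality in the form $\frac{1}{p_0}=\frac{1}{p_1}+\frac{1}{p_2}$ yields
\[ \|\square_n(fg)\|_{L^{p_0}}\lesssim\sum_{|k+\ell-n|_\infty\le 3}\|\square_k f\|_{L^{p_1}}\,\|\square_\ell g\|_{L^{p_2}}=\sum_{|k+\ell-n|_\infty\le 3}a_k\,b_\ell. \]
Writing $\ell=n-k+j$ with $|j|_\infty\le 3$ exhibits the right-hand side as a finite sum, over the $7^d$ admissible shifts $j$, of translates of the discrete convolution $a*b$; hence $\|\square_n(fg)\|_{L^{p_0}}\lesssim\sum_{|j|_\infty\le 3}(a*b)_{n+j}$. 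Taking $\ell^{q_0}$ norms and invoking Young's inequality for sequences under the hypothesis $\frac{1}{q_1}+\frac{1}{q_2}=1+\frac{1}{q_0}$ gives
\[ \|fg\|_{M^{p_0,q_0}}\asymp\big\|(\|\square_n(fg)\|_{L^{p_0}})_n\big\|_{\ell^{q_0}}\lesssim\|a*b\|_{\ell^{q_0}}\le\|a\|_{\ell^{q_1}}\|b\|_{\ell^{q_2}}, \]
which is \eqref{prm}.

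For the $\mathcal F L^1$-module statement, I would specialize the general estimate to $p_1=\infty$, $p_2=p_0=p$ and $q_1=1$, $q_2=q_0=q$ (which satisfy both index relations), obtaining $\|fg\|_{M^{p,q}}\lesssim\|f\|_{M^{\infty,1}}\|g\|_{M^{p,q}}$, and then use the embedding $\mathcal F L^1(\R^d)\hookrightarrow M^{\infty,1}(\R^d)$. The latter is immediate from the same characterization: $\|\square_n f\|_{L^\infty}\lesssim\|\sigma_n\hat f\|_{L^1}$, and summing in $n$ with the bounded-overlap property of the supports gives $\sum_n\|\square_n f\|_{L^\infty}\lesssim\|\hat f\|_{L^1}=\|f\|_{\mathcal F L^1}$.

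The only genuinely delicate point is the frequency-support bookkeeping that produces the convolution structure; everything else (uniform boundedness of $\square_n$, H\"older, Young) is routine. The endpoint cases where some indices equal $\infty$ require replacing the corresponding sums by suprema, but the argument is unchanged. I expect the main obstacle to be purely notational, namely keeping track of the admissible shifts so that the Young exponent relation $\frac{1}{q_1}+\frac{1}{q_2}=1+\frac{1}{q_0}$ appears in exactly the right place.
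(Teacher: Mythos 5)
Your argument is correct, and it is a complete, self-contained proof of a statement that the paper itself does not actually prove: the paper's ``proof'' of Theorem~\ref{theo:algebra} consists of citing \cite{benyi2009local} for the bilinear estimate \eqref{prm} and then deducing the $\mathcal{F}L^1$-module property from the embedding $\mathcal{F}L^{1}(\R^{d}) \hookrightarrow M^{\infty, 1}(\R^{d})$ (stated without proof) via the specialization $p_1=\infty$, $q_1=1$. Your route for the module property is identical to the paper's; the difference is that you supply the missing content, namely the standard proof of \eqref{prm} via the frequency-uniform decomposition (almost-orthogonality of the products $\square_k f\cdot\square_\ell g$, uniform $L^r$-boundedness of $\square_n$ from \eqref{fh}, H\"older in $x$, Young in the lattice variable), together with a direct verification of $\mathcal{F}L^{1}\hookrightarrow M^{\infty,1}$ from the bounded overlap of the supports of the $\sigma_n$. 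This is essentially the argument found in the cited reference, so there is no conflict with the paper; what your version buys is a proof readable entirely from the characterization of $M^{p,q}$ already recalled in Section~\ref{sec:prelim}, at the cost of the support bookkeeping you correctly identify as the only delicate step. One cosmetic remark: the shift count for $|j|_\infty\le 3$ is $7^d$ as you say, but any finite bound suffices since the $\ell^{q_0}$ norm is translation invariant, so nothing hinges on it.
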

\begin{proof}
The product relation \eqref{prm} between modulation spaces is well known and we refer the interested reader to  \cite{benyi2009local}  and  since $\mathcal{F}L^{1}(\mathbb R^{d}) \hookrightarrow M^{\infty, 1}(\mathbb R^{d})$, the desired inequality \eqref{mp} follows.
\end{proof}

For $f\in \mathcal{S}(\mathbb R^{d}),$ the Schr\"odinger propagator
$e^{i\frac{t}{2}\Delta}$ is given by
\begin{equation*}
\label{sg}
e^{i\frac{t}{2}\Delta}f(x)= \frac{1}{(2\pi)^d}\int_{\mathbb R^d}
e^{ix\cdot \xi}e^{-i\frac{t}{2}|\xi|^2}\hat f(\xi) d\xi.
\end{equation*}
The first point in the following statement was established in
\cite{benyi2007unimodular}, and the second, in \cite[Proposition
4.1]{wang2007global}. 
\begin{proposition}[\cite{benyi2007unimodular, wang2007global}]\label{uf} \
\begin{enumerate}
\item \label{e1} Let $t\in \mathbb R, \  p, q\in [1, \infty].$ Then
\begin{equation*}
\|e^{i\frac{t}{2} \Delta} f\|_{M^{p,q}} \leq C(t^{2}+1)^{d/4}\|f\|_{M^{p,q}}
\end{equation*}
where $C$ is some constant depending on $d.$
\item \label{e2} Let  $ 2 \leq p \leq \infty, 1\leq q \leq \infty.$
  Then
  \[\|e^{i\frac{t}{2}\Delta} f \|_{M^{p,q}}\leq  (1+|t|)^{- d\left( \frac{1}{p}-\frac{1}{2} \right)} \|f\|_{M^{p',q}}.\]
\end{enumerate}
\end{proposition}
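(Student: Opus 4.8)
The plan is to reduce both estimates to uniform convolution bounds on the frequency blocks, using the frequency-uniform characterization $\|f\|_{M^{p,q}}\asymp\big(\sum_{n\in\Z^d}\|\square_n f\|_{L^p}^q\big)^{1/q}$ recalled above. Since $e^{i\frac t2\Delta}$ is a Fourier multiplier it commutes with $\square_n$, so I would treat $\square_n e^{i\frac t2\Delta}f=e^{i\frac t2\Delta}\square_n f$ block by block. Fixing $\tilde\sigma\in C_c^\infty(\R^d)$ with $\tilde\sigma\equiv 1$ on $\{|\eta|_\infty\le1\}$ and $\operatorname{supp}\tilde\sigma\subset\{|\eta|_\infty\le2\}$, and setting $\tilde\sigma_n(\xi)=\tilde\sigma(\xi-n)\equiv1$ on $\operatorname{supp}\sigma_n$, I would write $e^{i\frac t2\Delta}\square_n f=K_{t,n}*\square_n f$ with $K_{t,n}=\mathcal F^{-1}\big(\tilde\sigma_n\,e^{-i\frac t2|\cdot|^2}\big)$, so that everything rests on $L^r$ estimates for $K_{t,n}$ that are uniform in $n$.

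The device that makes these estimates uniform is completing the square: on $\operatorname{supp}\tilde\sigma_n$ one has $|\xi|^2=|n|^2+2n\cdot(\xi-n)+|\xi-n|^2$, which exhibits $e^{-i\frac t2|\xi|^2}$ as a unimodular constant times the modulation $e^{-itn\cdot(\xi-n)}$ (a physical-space translation by $tn$) times $e^{-i\frac t2|\xi-n|^2}$. As translations and unimodular factors preserve every $L^r$ norm, $\|K_{t,n}\|_{L^r}=\|\tilde K_t\|_{L^r}$ with $\tilde K_t=\mathcal F^{-1}\big(\tilde\sigma\,e^{-i\frac t2|\cdot|^2}\big)$ independent of $n$. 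On the fixed cube $\operatorname{supp}\tilde\sigma$ the phase $y\cdot\eta-\tfrac t2|\eta|^2$ has non-degenerate Hessian $-tI$, so a stationary-phase analysis (with non-stationary-phase integration by parts away from the critical point $\eta=y/t$) yields $|\tilde K_t(y)|\lesssim(1+|t|)^{-d/2}$ for $|y|\lesssim|t|$ and rapid decay for $|y|\gg|t|$; integrating gives $\|\tilde K_t\|_{L^1}\lesssim(1+|t|)^{d/2}\approx(1+t^2)^{d/4}$, while the pointwise bound gives directly $\|\tilde K_t\|_{L^\infty}\lesssim(1+|t|)^{-d/2}$.

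For item (1), Young's inequality then gives $\|e^{i\frac t2\Delta}\square_n f\|_{L^p}\le\|\tilde K_t\|_{L^1}\|\square_n f\|_{L^p}\lesssim(1+t^2)^{d/4}\|\square_n f\|_{L^p}$ uniformly in $n$, and raising to the $q$-th power and summing over $n$ proves (1). For item (2) with $p\ge2$, I would interpolate the two block-wise endpoints: unitarity of $e^{i\frac t2\Delta}$ on $L^2$ gives $\|e^{i\frac t2\Delta}\square_n f\|_{L^2}=\|\square_n f\|_{L^2}$, while the kernel bound gives $\|e^{i\frac t2\Delta}\square_n f\|_{L^\infty}\lesssim(1+|t|)^{-d/2}\|\square_n f\|_{L^1}$. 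Riesz–Thorin interpolation produces $\|e^{i\frac t2\Delta}\square_n f\|_{L^p}\lesssim(1+|t|)^{-d(1/2-1/p)}\|\square_n f\|_{L^{p'}}$; since $1+|t|\ge1$ and $1/p\le1/2$, this factor is $\le(1+|t|)^{-d(1/p-1/2)}$, the claimed one, and summing over $n$ in $\ell^q$ gives (2).

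The main obstacle is precisely the uniformity in $n$: naive derivative bounds on $e^{-i\frac t2|\xi|^2}$ over $\operatorname{supp}\sigma_n$ grow like powers of $|t|\,|n|$, which would destroy the summation over frequency blocks, and it is the quadratic-phase factorization that absorbs this growth into a norm-preserving translation, leaving a single $n$-independent oscillatory integral to be controlled by stationary phase. A secondary delicate point is that the raw dispersive powers $|t|^{\pm d/2}$ are singular at $t=0$; both conclusions instead require the regularized kernel bounds $\|\tilde K_t\|_{L^1}\lesssim(1+|t|)^{d/2}$ and $\|\tilde K_t\|_{L^\infty}\lesssim(1+|t|)^{-d/2}$, which interpolate the small-time regime (where $\tilde K_t$ is essentially the fixed Schwartz function $\mathcal F^{-1}\tilde\sigma$, equivalently the Bernstein/Nikolskii inequality on a frequency-localized function) with the large-time stationary-phase regime.
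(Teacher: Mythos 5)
The paper itself gives no proof of Proposition~\ref{uf}: part (1) is quoted from \cite{benyi2007unimodular} and part (2) from \cite[Proposition~4.1]{wang2007global}, so there is no internal argument to compare against. Your proof is correct and is essentially the standard argument of those references: blockwise reduction via the frequency-uniform decomposition, the completing-the-square identity that turns the symbol on the $n$-th cube into a unimodular constant times a translation times a single $n$-independent kernel $\tilde K_t=\mathcal F^{-1}\bigl(\tilde\sigma e^{-i\frac t2|\cdot|^2}\bigr)$, the regularized bounds $\|\tilde K_t\|_{L^1}\lesssim(1+|t|)^{d/2}$ and $\|\tilde K_t\|_{L^\infty}\lesssim(1+|t|)^{-d/2}$, then Young for (1) and interpolation with $L^2$-unitarity for (2); you also correctly identify uniformity in $n$ and the removal of the $t\to0$ singularity as the two genuine issues. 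Two harmless remarks: your interpolation yields the stronger decaying factor $(1+|t|)^{-d(1/2-1/p)}$ (as in Wang--Hudzik), of which the paper's stated factor $(1+|t|)^{-d(1/p-1/2)}$ is a weaker consequence, and your version of (2) carries an implicit multiplicative constant not visible in the paper's statement, which is immaterial for the way the proposition is used (e.g.\ in verifying Assumption~\ref{hyp:actionX}).
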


For $(1/p, 1/q)\in [0,1] \times [0,1],$ we define the subsets 
\begin{align*}
&I_1=\{(p,q);\ \max (1/p,1/p')\leq 1/q\},\quad I_1^{*}=\{(p,q);\ \min (1/p,1/p')\geq 1/q\},\\
&I_2=\{(p,q);\ \max (1/q,1/2)\leq 1/p'\},\quad I_2^{*}=\{(p,q);\ \min (1/q,1/2)\geq 1/p'\},\\
&I_3=\{(p,q);\ \max (1/q,1/2)\leq 1/p\}, \quad I_3^{*}=\{(p,q);\ \min (1/q,1/2)\geq 1/p\}.
\end{align*}	
We now define the indices:
\begin{equation*}
\mu_1(p,q)=\begin{cases}
-1/p & \text{ if }   (1/p,1/q)\in I_1^{*},\\
1/q-1 & \text{ if }  (1/p,1/q)\in I_2^{*},\\
-2/p+1/q & \text{ if }  (1/p,1/q)\in I_3^{*},
\end{cases}
\end{equation*}
and 
\begin{equation*}
\mu_2(p,q)=\begin{cases}
-1/p  & \text{ if }   (1/p,1/q)\in I_1,\\
1/q-1 & \text{ if }  (1/p,1/q)\in I_2,\\
-2/p+1/q & \text{ if }  (1/p,1/q)\in I_3.
\end{cases}
\end{equation*}
The dilation operator $f_{\lambda}$ is given by
\[ f_{\lambda}(x)= f(\lambda x), \quad \lambda >0.\]
\begin{proposition}[See Theorem 3.2 in \cite{cordero2012dilation}]\label{ke}
Let $1\leq p, q \leq \infty$, $s\in \R.$ There exists  a constant
$C>0$ such that for all $f\in M^{p,q}_{s}(\R^d), 0< \lambda \leq 1,$
we have  
\[C^{-1} \lambda^{d \mu_1(p,q)} \min \{1, \lambda^s \}
  \|f\|_{M^{p,q}_s} \leq  \|f_{\lambda} \|_{M^{p,q}_s} \leq C
  \lambda^{d \mu_2(p,q)} \max\{1, \lambda^s\} \|f\|_{M^{p,q}_s}.\] 
\end{proposition}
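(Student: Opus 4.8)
The plan is to reduce the weighted estimate to the unweighted case $s=0$, isolating the two factors $\min\{1,\lambda^s\}$ and $\max\{1,\lambda^s\}$ as the precise price of pushing the Bessel weight $\langle\xi\rangle^s$ through the dilation, and then to prove the unweighted dilation inequalities, where the exponents $\mu_1,\mu_2$ are actually produced. The guiding observation is that for $0<\lambda\le 1$ the dilation $f_\lambda=f(\lambda\cdot)$ compresses frequencies, so that a frequency $\xi$ of $f_\lambda$ comes from the frequency $\xi/\lambda$ of $f$, and the weight ratio $\langle\xi\rangle^s/\langle\xi/\lambda\rangle^s$ is exactly what carries the $\lambda^s$-dependence.

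First I would record that $\|f\|_{M^{p,q}_s}\asymp\|\langle D\rangle^s f\|_{M^{p,q}}$, where $\langle D\rangle^s=\F^{-1}\langle\xi\rangle^s\F$. This follows from the frequency-uniform characterization recalled just before Lemma~\ref{rl}: on $\operatorname{supp}\sigma_n$ one has $\langle\xi\rangle^s\asymp(1+|n|)^s$, and $\sigma_n(\xi)\langle\xi\rangle^s(1+|n|)^{-s}$ is a Fourier multiplier on $L^p$ with bounds uniform in $n$, whence $\|\square_n\langle D\rangle^s f\|_{L^p}\asymp(1+|n|)^s\|\square_n f\|_{L^p}$. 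Since $\widehat{f_\lambda}(\xi)=\lambda^{-d}\hat f(\xi/\lambda)$, a one-line Fourier computation gives the factorization
\[
\langle D\rangle^s f_\lambda = m_\lambda(D)\,(\langle D\rangle^s f)_\lambda,\qquad m_\lambda(\xi)=\frac{\langle\xi\rangle^s}{\langle\xi/\lambda\rangle^s}.
\]
Thus everything reduces to (i) the unweighted dilation estimate for $(\langle D\rangle^s f)_\lambda$ and (ii) the operator norm of $m_\lambda(D)$ on $M^{p,q}$.

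For (ii), the elementary inequalities $\langle\xi\rangle\le\langle\xi/\lambda\rangle\le\lambda^{-1}\langle\xi\rangle$, valid for $0<\lambda\le1$, give the pointwise two-sided bound $\min\{1,\lambda^s\}\le m_\lambda(\xi)\le\max\{1,\lambda^s\}$. To promote this to an operator bound I would split frequencies: on $|\xi|\lesssim1$ the weight is comparable to a constant and $m_\lambda(D)$ is treated directly, while on $|\xi|\gtrsim1$ one checks that, after factoring out its size $\asymp\lambda^s$, the symbol $m_\lambda$ obeys symbol estimates uniform in $\lambda\in(0,1]$, so that the Fourier multiplier theorem on modulation spaces (symbols with uniformly bounded $\F L^1$-norms over unit cubes act boundedly on every $M^{p,q}$) yields $\|m_\lambda(D)\|_{M^{p,q}\to M^{p,q}}\lesssim\max\{1,\lambda^s\}$. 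The same argument applied to $1/m_\lambda$ gives $\|m_\lambda(D)^{-1}\|\lesssim\max\{1,\lambda^{-s}\}$, i.e. a lower bound carrying the factor $\min\{1,\lambda^s\}$. Combined with (i), these reproduce both the upper factor $\max\{1,\lambda^s\}$ and the lower factor $\min\{1,\lambda^s\}$ of the statement.

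It remains to prove the unweighted estimate $\lambda^{d\mu_1(p,q)}\|g\|_{M^{p,q}}\lesssim\|g_\lambda\|_{M^{p,q}}\lesssim\lambda^{d\mu_2(p,q)}\|g\|_{M^{p,q}}$ for $0<\lambda\le1$, which I regard as the main obstacle; it is the $s=0$ case of Theorem~3.2 in \cite{cordero2012dilation}. I would compute $V_\phi g_\lambda$ with a Gaussian window $\phi$, for which the dilated window stays Gaussian up to normalization, reducing matters to explicit scalar estimates. The bounds are first established at the corner points of the square $(1/p,1/q)\in[0,1]^2$, where $M^{p,q}$ degenerates to $L^2=H^0$, to $L^p$, or to $\F L^p$ via the embeddings of Lemma~\ref{rl}(\ref{el}),(\ref{rcs}), and where dilation acts by the exact powers $\lambda^{-d/p}$ on $L^p$ and $\lambda^{-d/p'}$ on $\F L^p$; the full square is then filled in by complex interpolation (with the usual care when $p$ or $q$ equals $\infty$). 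The delicate point — and the reason $\mu_1\neq\mu_2$ in general — is that this procedure produces genuinely different lower and upper exponents, piecewise linear on the regions $I_j,I_j^{*}$, rather than a single clean power of $\lambda$; pinning down the sharp exponent on each region is exactly the technical heart of the argument.
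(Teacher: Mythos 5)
The paper does not actually prove this proposition: it is imported verbatim as Theorem~3.2 of \cite{cordero2012dilation}, so there is no in-paper argument to measure your proposal against. On its own terms, your reduction of the weighted estimate to the unweighted one, via the lifting $\|f\|_{M^{p,q}_s}\asymp\|\F^{-1}\langle\cdot\rangle^s\F f\|_{M^{p,q}}$ and the factorization through the multiplier $m_\lambda(\xi)=\langle\xi\rangle^s\langle\xi/\lambda\rangle^{-s}$, is sound in structure and correctly isolates the factors $\min\{1,\lambda^s\}$ and $\max\{1,\lambda^s\}$. But the execution has two genuine gaps. First, your low-frequency justification of the multiplier bound --- ``on $|\xi|\lesssim 1$ the weight is comparable to a constant'' --- is false for the factor $\langle\xi/\lambda\rangle^{-s}$, which varies over the full range between $1$ and $\lambda^{s}$ as $\xi$ sweeps the unit ball; moreover the normalized symbol $(\lambda^2+|\xi|^2)^{\pm|s|/2}$ has second derivatives of size $\lambda^{|s|-2}$ near the origin, so the ``uniform symbol estimates'' you invoke fail exactly there. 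The needed uniform bound $\sup_n\|\sigma_n m_\lambda\|_{\mathcal FL^1}\lesssim\max\{1,\lambda^s\}$ is plausibly true, but it requires a genuine uniform-in-$\lambda$ Wiener-amalgam estimate for $(\lambda^2+|\xi|^2)^{\pm|s|/2}$ on the unit cube, not the comparability you assert.

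Second, and more seriously, the unweighted two-sided estimate with the exponents $\mu_1,\mu_2$ is the entire quantitative content of the proposition, and your sketch does not produce it. Complex interpolation from the four corners of the square $(1/p,1/q)\in[0,1]^2$ can only yield upper bounds with a single affine exponent, whereas $\mu_2$ is piecewise linear over the three regions $I_1,I_2,I_3$; generating the three pieces requires the interior anchor points ($M^{2,2}=L^2$, and the one-sided embeddings between $M^{p,q}$, $L^p$ and $\F L^p$ from Lemma~\ref{rl}, which transfer the exact dilation powers of $L^p$ and $\F L^p$ in one direction only, not as isomorphisms). The lower bound $\lambda^{d\mu_1}$ cannot come from interpolation at all: it is obtained by duality, writing $\langle f,g\rangle=\lambda^{d}\langle f_\lambda,g_\lambda\rangle$ and applying the upper bound on $M^{p',q'}$ (one checks $\mu_1(p,q)=-1-\mu_2(p',q')$), a step your outline omits entirely. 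You candidly flag this part as ``the technical heart'' and in effect defer to \cite{cordero2012dilation} --- which is exactly what the paper does --- but it means the proposal, as written, is a reduction plus a citation rather than a proof.
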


\section{Norm inflation in Fourier-Lebesgue spaces}
\label{sec:inflationFLp}
Define
\[ \mu_{\sigma} (z_1,\dots,z_{2\sigma +1})= \prod_{\ell=1}^{\sigma +1} z_{\ell} \prod_{m=\sigma+2}^{2\sigma +1} \bar{z}_m.\]
\begin{definition}\label{imd} For $\psi_0\in L^2(\R^d),$  define
$U_1[\psi_0](t)= e^{i \frac{t}{2} \Delta} \psi_0,$
\[ U_k[\psi_0](t) = -i \sum_{{k_1, \dots, k_{2\sigma+1} \geq 1}\atop{k_1+\dots+k_{2\sigma +1} =k}}\int_0^t e^{i \frac{(t-\tau)}{2} \Delta}\mu_{\sigma}\left(  U_{k_1}[\psi_0],...,U_{k_{2\sigma +1}}[\psi_0] \right)(\tau) d\tau,  \quad k\geq 2.\]
\end{definition}
It is known  that  the solution $\psi$ of \eqref{nls} can be written
as a  power series  expansion $\psi= \sum_{k=1}^{\infty}
U_k[\psi_0]$, see \cite{bejenaru2006sharp}, and \cite{IwabuchiOgawa,kishimoto2019}
for later refinements of the method. 

\begin{definition}  Let $A>0$ be a dyadic number.  Define the  space $M_{A}$ as the completion
of $C_0^{\infty}(\R^d)$ with  respect to the norm
\[\|f\|_{M_{A}}= \sum_{\xi \in A\Z^d}  \|\hat{f}\|_{L^2 (\xi+
    Q_A)},\quad Q_A= [-A/2, A/2)^d.\]
\end{definition}
\begin{lemma}[\cite{bejenaru2006sharp, kishimoto2019}]\label{nkm}
  Let $A>0$ be a dyadic number.
  \begin{enumerate}
\item $M_A \sim_A M_1$, and for all $\epsilon>0$, $H^{\frac{d}{2}+ \epsilon} \hookrightarrow  M_1 \hookrightarrow  L^2 $.
\item  $M_A$ is a Banach algebra under pointwise multiplication, and
\[ \|fg\|_{M_A} \leq C(d) A^{d/2} \|f\|_{M_A} \|g\|_{M_A} \quad
  \forall f, g \in M_A.\]
\item\label{3}  Let $A \geq 1$ be a dyadic number and $\phi \in M_A$ with $\|\psi_0\|_{M_A} \leq M.$ Then, there exists $C>0$ independent of $A$ and $M$ such that 
\[ \|U_k[\psi_0] (t) \|_{M_A} \leq t^{\frac{k-1}{2\sigma}} (C A^{d/2} M)^{k-1} M ,\]
for any $t\geq 0$ and $k \geq 1.$
\item  Let $\(b_k\)_{k=1}^{\infty}$ be a sequence of nonnegative  real numbers such that 
\[ b_k \leq C  \sum_{{k_1,\dots, k_{2\sigma +1} \geq
      1}\atop{k_1+\dots+ k_{2\sigma +1} =k}}  b_{k_1}\cdots
  b_{k_{2\sigma +1}} \quad \forall k \geq 2.\]
Then  we have 
\[ b_k \leq b_1 C_0^{k-1}, \quad\forall k \geq 1,\text{ where } C_0=
  \frac{\pi^2}{6} \(C (2\sigma +1)^2\)^{1/(2\sigma)} b_1.\] 
\end{enumerate}
\end{lemma}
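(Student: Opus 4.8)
Write $m=2\sigma+1$ throughout and recall $\pi^2/6=\sum_{n\ge1}n^{-2}=:\zeta(2)$. The plan is to prove, by strong induction on $k$, the \emph{stronger} bound
\[ b_k\le \frac{b_1\,C_0^{\,k-1}}{k^2},\qquad k\ge1, \]
the gain $1/k^2$ being exactly what makes the induction self-improving (a bare geometric bound $b_k\le b_1C_0^{k-1}$ would not reproduce itself through the $(2\sigma+1)$-linear recursion). The case $k=1$ is an equality. For the step, set $B=(Cm^2)^{-1/(m-1)}/\zeta(2)$ and note that $C_0=b_1/B$, so the target reads $b_k\le B\,C_0^{\,k}/k^2$. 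Since $m\ge3$, every index in a composition $k_1+\dots+k_m=k$ satisfies $k_i\le k-(m-1)\le k-2<k$, so the hypothesis applies to each factor; feeding it into the recursion and pulling out the factors $C$, $B^m$, $C_0^{\,k}$ reduces everything to the purely combinatorial convolution estimate
\[ S_m(k):=\sum_{\substack{k_1+\dots+k_m=k\\ k_i\ge1}}\frac{1}{k_1^2\cdots k_m^2}\ \le\ \frac{m^2\,\zeta(2)^{m-1}}{k^2},\qquad k\ge m, \]
which carries the whole weight of the argument.

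To establish this estimate I would multiply the summand by $k^2=(k_1+\dots+k_m)^2$ and expand the square as $\sum_i k_i^2+\sum_{i\ne j}k_ik_j$. This writes $k^2S_m(k)$ as a diagonal part $\sum_i\sum_{\mathrm{comp}}\prod_{l\ne i}k_l^{-2}$ plus an off-diagonal part $\sum_{i\ne j}\sum_{\mathrm{comp}}k_i^{-1}k_j^{-1}\prod_{l\ne i,j}k_l^{-2}$. In the diagonal part, for each fixed $i$ the summand no longer involves $k_i$ (which is determined by the constraint), so dropping the constraint bounds the inner sum by $\zeta(2)^{m-1}$, giving $m\,\zeta(2)^{m-1}$ after summing over the $m$ choices of $i$. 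In the off-diagonal part, for each ordered pair $i\ne j$ I would first sum the two "linear" indices using
\[ \sum_{\substack{a+b=t\\ a,b\ge1}}\frac{1}{ab}=\frac{2}{t}\sum_{a=1}^{t-1}\frac1a\le1, \]
the last inequality being $2\sum_{a=1}^{t-1}a^{-1}\le t$ for $t\ge2$ (immediate by induction), and then bound the remaining $m-2$ indices by $\zeta(2)^{m-2}$; with $m(m-1)$ ordered pairs this part is at most $m(m-1)\zeta(2)^{m-2}$. Collecting, $k^2S_m(k)\le m\,\zeta(2)^{m-2}\bigl(\zeta(2)+m-1\bigr)$, and the claimed $m^2\zeta(2)^{m-1}$ follows from the single elementary inequality $\zeta(2)+m-1\le m\,\zeta(2)$, i.e. $1\le\zeta(2)=\pi^2/6$, valid for $m\ge2$.

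With this in hand the inductive step closes exactly: the constant produced is $C\,B^{m-1}m^2\zeta(2)^{m-1}$, and the choice of $B$ makes it equal to $1$, whence $b_k\le B\,C_0^{\,k}/k^2=b_1C_0^{\,k-1}/k^2$. Discarding the harmless factor $1/k^2\le1$ gives the stated conclusion, and tracing the constants back yields precisely $C_0=\frac{\pi^2}{6}\bigl(C(2\sigma+1)^2\bigr)^{1/(2\sigma)}b_1$ (here $m-1=2\sigma$).

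The main obstacle is obtaining the \emph{sharp} power $m^2=(2\sigma+1)^2$ in the convolution estimate. A naive iterated convolution, bounding $\sum_j j^{-2}(k-j)^{-2}$ one factor at a time, only produces a constant growing like $(4\zeta(2))^{m-1}$, which is far too large and destroys the stated $C_0$; and a crude pigeonhole on the largest index ($\max_i k_i\ge k/m$, so $k_{i_0}^{-2}\le m^2/k^2$) loses an extra factor $m$, giving $m^3$ rather than $m^2$, because each of the $m$ possible positions of the maximum is then bounded by the full unconstrained $\zeta(2)^{m-1}$. Expanding $(k_1+\dots+k_m)^2$ is exactly what extracts the factor $m^2$ while keeping the $\zeta(2)$-exponent at $m-1$; getting this bookkeeping right, together with the clean pair estimate $2\sum_{a=1}^{t-1}a^{-1}\le t$, is the only delicate point.
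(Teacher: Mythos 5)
The paper does not prove this lemma at all: it is quoted verbatim from \cite{bejenaru2006sharp} and \cite{kishimoto2019}, so there is no internal proof to compare against. Measured against the statement itself, your proposal has one genuine gap of coverage: it addresses only item (4). Items (1)--(3) --- the norm equivalence $M_A\sim_A M_1$ with the embeddings $H^{d/2+\epsilon}\hookrightarrow M_1\hookrightarrow L^2$, the algebra property with the explicit constant $C(d)A^{d/2}$, and the bound $\|U_k[\psi_0](t)\|_{M_A}\le t^{(k-1)/(2\sigma)}(CA^{d/2}M)^{k-1}M$ on the Picard iterates (which in the source is itself proved by combining items (2) and (4) with an induction on $k$ and the time integration producing the factor $t^{(k-1)/(2\sigma)}$) --- are substantive and are not touched. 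As a proof of the lemma as stated, this is incomplete.

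That said, your argument for item (4) is correct and complete, and I verified the details: the strengthened inductive hypothesis $b_k\le b_1C_0^{k-1}/k^2$ is exactly what is needed, since the number of compositions $\binom{k-1}{2\sigma}$ grows polynomially and a bare geometric bound would not close. The convolution estimate $S_m(k)\le m^2\zeta(2)^{m-1}/k^2$ via expanding $(k_1+\cdots+k_m)^2$ into diagonal and off-diagonal parts is sound; the pair identity $\sum_{a+b=t}\frac{1}{ab}=\frac{2}{t}\sum_{a=1}^{t-1}\frac1a\le 1$ holds for all $t\ge2$ by the induction you indicate; the empty-sum case $2\le k\le 2\sigma$ gives $b_k=0$ and is harmless; and the constants recombine to exactly $C_0=\frac{\pi^2}{6}\bigl(C(2\sigma+1)^2\bigr)^{1/(2\sigma)}b_1$ via $\zeta(2)+m-1\le m\zeta(2)$. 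So the one part you prove, you prove correctly; the lemma as a whole is not established.
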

\begin{corollary}[See Corollary 1 in \cite{kishimoto2019}] \label{uc}Let $A\geq 1$ be dyadic and $M>0$. If $0<T\ll (A^{d/2} M)^{-2\sigma},$ then  for any  $\psi_0 \in M_A$ with  $\|\psi_0\|_{M_A} \leq M$:\\
$(i)$ A unique solution $\psi$ to the integral equation associated with \eqref{nls},
\[\psi(t) = e^{i \frac{t}{2} \Delta} \psi_0 - i \int_0^t e^{i\frac{(t-\tau)}{2} \Delta} \mu_{\sigma} (\psi(\tau)) d\tau\]
exists in $C([0, T], M_A).$ \\
$(ii)$ The solution $\psi$ given in $(i)$ has the expression
\begin{equation}
  \label{eq:Picard-sum}
  \psi= \sum_{k=1}^{\infty} U_k[\psi_0]= \sum_{\ell =0}^{\infty} U_{2\sigma \ell +1}[\psi_0] 
\end{equation}
which converges absolutely in $C([0, T], M_A).$
\end{corollary}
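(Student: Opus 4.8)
The plan is to run a standard Picard fixed-point argument in the Banach space $X_T = C([0,T], M_A)$ with norm $\|\psi\|_{X_T} = \sup_{t\in[0,T]}\|\psi(t)\|_{M_A}$, and then to identify the resulting fixed point with the power series \eqref{eq:Picard-sum}. The key preliminary observation is that the propagator $e^{i\frac{t}{2}\Delta}$ is an isometry on $M_A$: on the Fourier side it is multiplication by the unimodular symbol $e^{-i\frac{t}{2}|\xi|^2}$, which leaves $\|\hat f\|_{L^2(\xi+Q_A)}$ unchanged on each cube and hence preserves the $M_A$ norm. Combining this with the Banach algebra estimate of Lemma~\ref{nkm}(2) applied $2\sigma$ times — and with invariance of the $M_A$ norm under complex conjugation, which absorbs the conjugated factors in $\mu_\sigma$ — gives the controlling bound
\[\|\mu_\sigma(\psi(\tau))\|_{M_A} \leq \(C(d)A^{d/2}\)^{2\sigma}\|\psi(\tau)\|_{M_A}^{2\sigma+1}.\]

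For part $(i)$, I would set $\Phi(\psi)(t) = e^{i\frac{t}{2}\Delta}\psi_0 - i\int_0^t e^{i\frac{(t-\tau)}{2}\Delta}\mu_\sigma(\psi(\tau))\,d\tau$ and work on the ball $B = \{\psi \in X_T : \|\psi\|_{X_T}\leq 2M\}$. The estimate above yields $\|\Phi(\psi)\|_{X_T} \leq M + T(C(d)A^{d/2})^{2\sigma}(2M)^{2\sigma+1}$, so $\Phi$ maps $B$ into itself as soon as $T(C(d)A^{d/2})^{2\sigma}(2M)^{2\sigma}\cdot 2M \leq M$, i.e.\ for $T\leq c(d,\sigma)(A^{d/2}M)^{-2\sigma}$. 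For the contraction estimate I would write $\mu_\sigma(\psi)-\mu_\sigma(\phi)$ as a telescoping sum of $2\sigma+1$ products, each containing one factor $\psi-\phi$ and $2\sigma$ factors of size $\leq 2M$, obtaining $\|\Phi(\psi)-\Phi(\phi)\|_{X_T} \leq T(2\sigma+1)(C(d)A^{d/2})^{2\sigma}(2M)^{2\sigma}\|\psi-\phi\|_{X_T}$, which is $<1$ under the same smallness assumption $T\ll (A^{d/2}M)^{-2\sigma}$. The Banach fixed point theorem then produces a unique fixed point in $B$; uniqueness in all of $C([0,T],M_A)$ follows from a standard continuity/Gronwall argument, comparing two solutions on a subinterval where both remain in a ball.

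For part $(ii)$, Lemma~\ref{nkm}(3) gives $\sum_{k\geq 1}\|U_k[\psi_0](t)\|_{M_A} \leq M\sum_{k\geq 1}\(t^{1/(2\sigma)}CA^{d/2}M\)^{k-1}$, a geometric series converging uniformly on $[0,T]$ once $T^{1/(2\sigma)}CA^{d/2}M<1$, i.e.\ $T\ll (A^{d/2}M)^{-2\sigma}$; this gives absolute convergence of $\sum_k U_k$ in $X_T$, with sum lying in $B$ for $T$ small. The identity of the two sums in \eqref{eq:Picard-sum} is combinatorial: an induction on $k$ from Definition~\ref{imd} shows $U_k[\psi_0]\equiv 0$ unless $k\equiv 1 \pmod{2\sigma}$, since $2\sigma+1$ indices each $\equiv 1$ sum to $\equiv 1$. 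Finally I would verify that $\tilde\psi := \sum_k U_k[\psi_0]$ solves the integral equation: absolute convergence permits pulling the sum through the $(2\sigma+1)$-linear map $\mu_\sigma$, so that $\sum_{k\geq 2}\sum_{k_1+\dots+k_{2\sigma+1}=k}\mu_\sigma(U_{k_1},\dots,U_{k_{2\sigma+1}}) = \mu_\sigma(\tilde\psi)$, and summing Definition~\ref{imd} over $k$ exactly reproduces the integral equation. By the uniqueness in $(i)$, $\tilde\psi=\psi$.

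The \emph{main obstacle} is really a bookkeeping point rather than a genuine difficulty: the resummation in $(ii)$, where one justifies interchanging the infinite series with the $(2\sigma+1)$-linear nonlinearity and reorganizing the triple-indexed Picard sum. This is precisely where absolute convergence in the Banach algebra $M_A$ is needed. Everything else reduces to invoking the isometry of the propagator together with the algebra and $U_k$ estimates already recorded in Lemma~\ref{nkm}.
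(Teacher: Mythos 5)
Your proposal is correct and follows the standard Picard-series/fixed-point argument; the paper itself gives no proof of this corollary, deferring entirely to Corollary 1 of Kishimoto's paper, whose argument (building on Bejenaru--Tao) is essentially the one you reconstruct: isometry of the propagator on $M_A$, the algebra bound of Lemma~\ref{nkm}, geometric-series convergence of $\sum_k U_k[\psi_0]$, and identification of the sum with the unique fixed point. The vanishing of $U_k[\psi_0]$ for $k\not\equiv 1 \pmod{2\sigma}$, which you prove by induction, is also recorded in the remark following the corollary in the paper.
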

\begin{remark}
 By  Definition \ref{imd},  we obtain  $U_k[\psi_0](t)=0$ unless
 $k\equiv 1$ mod $2\sigma$.  For instance, $U_k[\psi_0](t) \equiv 0$ for all $k \in 2\sigma  \N.$ To see this,  fix $\sigma \in \N.$ Then clearly $U_{2\sigma}[\psi_0]\equiv 0$ because  there does not exist $k_j\geq 1$ such that $k_1+\cdots + k_{2\sigma +1}=2\sigma.$ Now since $ U_{2\sigma }[\psi_0] \equiv 0,$ it follows that $U_{4\sigma}[\psi_0] \equiv 0$ and so on. Thus,  $U_k[\psi_0](t) \equiv 0$ for all $k \in 2\sigma  \N.$
 \end{remark}
The general idea from \cite{bejenaru2006sharp} to prove instability is
to show that one term in the sum \eqref{eq:Picard-sum} dominates the
sum of the other terms, and rules out the continuity of the flow
map. Usually, the first Picard iterate accounting for nonlinear
effects, that is, $U_{2\si+1}[\psi_0]$ in our case, does the job. The
proof of Theorems~\ref{theo:inflationFLp} and
\ref{theo:inflation-modulation} indeed relies on this idea, for a
suitable $\psi_0$ as in \cite{kishimoto2019}.
\smallbreak

Let $N, A$  be dyadic numbers to be specified so that   $N\gg 1$ and $0<A\ll N$. We choose initial data  of the following form
\begin{equation}\label{kid}
\widehat{\psi_0} = R A^{-d/p} N^{-s} \chi_{\Omega},
\end{equation}
for a positive constant $R$ and a set $\Omega$ satisfying 
\[ \Omega = \bigcup_{\eta \in \sum} (\eta + Q_A) ,\]
for some $\sum \subset \{ \xi \in \R^d:  |\xi| \sim N\}$ such that $\#
\sum \leq 3.$ Then we have  
\[ \| \psi_0\|_{\mathcal{F}L^p_s} \sim R,  \quad \|\psi_0\|_{M_A} \sim  R A^{d (\frac{1}{2}-\frac{1}{p})} N^{-s}.\]
In fact, we have 
\begin{align*}
\|\psi_0\|_{\mathcal{F}L^p_s}^p &= R^p A^{-d} N^{-sp} \int_{\Omega}
                                  (1+|\xi|^2)^{ps/2} d\xi\\
  &= R^p A^{-d} N^{-sp} \sum_{\eta} \int_{\eta + Q_A} (1+|\xi|^2)^{ps/2} d\xi.
\end{align*}
Since  $A<N$ and $|\eta| \sim  N,$ we have $N^{2} \lesssim  (1+|\xi|^2) \lesssim N^2$ for $\xi \in \eta +Q_A$ and so $N^{ps} \lesssim  (1+|\xi|^2)^{ps/2} \lesssim N^{ps}$.    As $\# \sum \leq 3$ and $|\eta + Q_A| \sim A^{d},$ we infer that $ \|\psi_0\|_{\mathcal{F}L^p_s}^p \sim  R^p.$

\begin{lemma}[See Lemma 3.6 in \cite{kishimoto2019}]\label{kms}There exists $C>0$ such that for any $\psi_0$ satisfying \eqref{kid} and $k\geq 1$, we have 
\[\left|  \operatorname{supp}\widehat{ U_k[\psi_0]} (t)\right| \leq C^k
  A^d ,\quad \forall t\geq 0.\]
\end{lemma}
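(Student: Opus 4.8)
The plan is to prove the bound by induction on $k$, tracking not merely the measure but the geometric shape of $\operatorname{supp}\widehat{U_k[\psi_0]}(t)$. Concretely, I would show that for every $t\geq 0$ this set can be covered by a number $B_k$ of translates of the cube $Q_A$, with $B_k$ independent of $t$, so that the desired estimate reduces to the two statements $\bigl|\operatorname{supp}\widehat{U_k[\psi_0]}(t)\bigr|\leq B_k\,A^d$ and $B_k\leq C^k$. Keeping the covering cubes of fixed side $A$ and simply counting them, rather than attempting to control the measure of Minkowski sums directly, is what makes the argument close.

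The induction is driven by two structural facts. First, the propagator $e^{i\frac{t}{2}\Delta}$ acts on the Fourier side as multiplication by the unimodular symbol $e^{-i\frac{t}{2}|\xi|^2}$, so it leaves the Fourier support unchanged; in particular $\operatorname{supp}\widehat{U_1[\psi_0]}(t)=\operatorname{supp}\widehat{\psi_0}=\Omega$ for all $t$, which by \eqref{kid} is a union of at most three translates of $Q_A$, giving $B_1\leq 3$. Second, $\mu_\sigma$ is a product of $2\sigma+1$ factors (some conjugated), so on the Fourier side it becomes a convolution and
\[\operatorname{supp}\widehat{\mu_\sigma(v_1,\dots,v_{2\sigma+1})}\subseteq \sum_{\ell=1}^{\sigma+1}\operatorname{supp}\widehat{v_\ell}-\sum_{m=\sigma+2}^{2\sigma+1}\operatorname{supp}\widehat{v_m},\]
a Minkowski combination of the factor supports (conjugation merely reflects a support through the origin, preserving both its measure and its decomposition into translates of $Q_A$).

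For the inductive step I would assume that for each $j$ the set $\operatorname{supp}\widehat{U_{k_j}[\psi_0]}(\tau)$ is contained, uniformly in $\tau\geq 0$, in a fixed union $S_{k_j}$ of $B_{k_j}$ translates of $Q_A$. Since the Minkowski sum or difference of $2\sigma+1$ translates of $Q_A$ is a cube of side $(2\sigma+1)A$, it is covered by at most $(2\sigma+1)^d$ translates of $Q_A$; selecting one cube from each factor in all $\prod_j B_{k_j}$ ways, the Minkowski combination of $S_{k_1},\dots,S_{k_{2\sigma+1}}$ is covered by at most $(2\sigma+1)^d\prod_j B_{k_j}$ translates of $Q_A$. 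Because the propagator preserves supports and the time integral only takes the union over $\tau\in[0,t]$ of integrands all supported in this fixed, $\tau$-independent cover, summing over the compositions $k_1+\dots+k_{2\sigma+1}=k$ yields a cover of $\operatorname{supp}\widehat{U_k[\psi_0]}(t)$ valid for every $t$, with
\[B_k\leq (2\sigma+1)^d\sum_{k_1+\dots+k_{2\sigma+1}=k} B_{k_1}\cdots B_{k_{2\sigma+1}}.\]

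This is exactly the recursion handled by Lemma~\ref{nkm}(4), applied with $C=(2\sigma+1)^d$ and $b_k=B_k$, which gives $B_k\leq B_1\,C_0^{\,k-1}\leq C^k$ for a suitable $C=C(d,\sigma)$, and hence $\bigl|\operatorname{supp}\widehat{U_k[\psi_0]}(t)\bigr|\leq B_k\,A^d\leq C^k A^d$. The step I expect to be the main obstacle is the bookkeeping that makes the count fit this admissible recursion: one must verify that the number of covering cubes grows combinatorially according to the convolution structure of $\mu_\sigma$, and that the cover can be chosen uniformly in $t$, so as to bypass the fact that the measure of a Minkowski sum is not controlled by the measures of its summands.
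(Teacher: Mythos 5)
Your argument is correct, and it supplies a proof that the paper itself omits: Lemma~\ref{kms} is only quoted from \cite{kishimoto2019} (Lemma~3.6 there), with no proof reproduced in the text. Your route is the expected one and is sound at every step: the free propagator is a Fourier multiplier and so preserves Fourier supports; conjugation reflects the support through the origin; the product structure of $\mu_\sigma$ turns into a Minkowski combination of supports on the Fourier side; and the time integral only takes a union over $\tau$ of sets all contained in a fixed, $\tau$-independent cover, so the induction closes uniformly in $t$. The base case $B_1\le 3$ comes from \eqref{kid}, the combination of $2\sigma+1$ translates of $\pm Q_A$ sits in a cube of side $(2\sigma+1)A$ and hence in $(2\sigma+1)^d$ translates of $Q_A$, and the resulting recursion is exactly of the form handled by Lemma~\ref{nkm}(4), giving $B_k\le C^k$ with $C=C(d,\sigma)$. (For $2\le k\le 2\sigma$, or more generally $k\not\equiv 1 \bmod 2\sigma$, the sum over compositions is empty and $U_k\equiv 0$, so the bound is vacuous there; this is consistent with the recursion.) The only stylistic difference from Kishimoto's original proof is that he describes $\operatorname{supp}\widehat{U_k[\psi_0]}(t)$ explicitly as a subset of the $k$-fold algebraic sum of $\Omega\cup(-\Omega)$ and bounds its measure directly (a union of boundedly many translates of a cube of side comparable to $kA$), whereas you keep the covering cubes at the fixed scale $A$ and count them through the recursion lemma already available in the paper; both yield $C^kA^d$, and your version has the small advantage of reusing Lemma~\ref{nkm}(4) rather than redoing the combinatorics.
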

The next result is the analogue of \cite[Lemma~3.7]{kishimoto2019}.
\begin{lemma} \label{d1} Let $\psi_0$ given by \eqref{kid}, $s<0$ and
  $1\leq p \leq  \infty.$ Then there exists $C>0$ depending  only on
  $d, \sigma$ and $s$ such that following holds.
  \begin{align}
  &  \|U_1[\psi_0](T)\|_{\mathcal{F}L^p_s} \leq C R ,\quad \forall
    T\ge 0,\label{de1}   \\
    &\| U_k[\psi_0](T)\|_{\mathcal{F}L^p_s} \lesssim \rho_1^{k-1}
      C^{k} A^{-d/p} RN^{-s} \|\langle \cdot \rangle^s\|_{L^p
      (Q_A)},\label{de2} 
  \end{align}
  where $\rho_1=  R N^{-s} A^{d \left( 1- \frac{1}{p} \right)}
  T^{\frac{1}{2\sigma}}$. 
\end{lemma}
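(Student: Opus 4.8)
The estimate \eqref{de1} is immediate: since $\widehat{U_1[\psi_0]}(T,\xi)=e^{-iT|\xi|^2/2}\widehat{\psi_0}(\xi)$ and the phase is unimodular, the propagator is an isometry on $\mathcal{F}L^p_s$, whence $\|U_1[\psi_0](T)\|_{\mathcal{F}L^p_s}=\|\psi_0\|_{\mathcal{F}L^p_s}\sim R$, as already computed after \eqref{kid}. The substance is \eqref{de2}, and the plan is to separate the two features that control the weighted $L^p$ norm of $\widehat{U_k}$: its \emph{amplitude} (an $L^\infty$ bound) and the \emph{location and size of its support}. Writing $a:=\|\widehat{\psi_0}\|_{L^\infty}=RA^{-d/p}N^{-s}$, the first goal is the pointwise bound
\[
\|\widehat{U_k[\psi_0]}(T)\|_{L^\infty}\ \lesssim\ a\,\bigl(C\,a\,A^{d}\,T^{1/(2\sigma)}\bigr)^{k-1},\qquad k\ge 1 .
\]

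To prove this I would run the Duhamel recursion of Definition~\ref{imd} in the sup\nobreakdash-based analogue of the algebra $M_A$. On the Fourier side each nonlinear step turns a product into a convolution of $2\sigma+1$ factors, and Young's inequality together with the fact that each $\widehat{U_{k_i}}$ is supported in $O(1)^{k_i}$ cubes of side $A$ gives $\|\widehat{\mu_\sigma(U_{k_1},\dots,U_{k_{2\sigma+1}})}\|_{L^\infty}\lesssim C^k(A^{d})^{2\sigma}\prod_i\|\widehat{U_{k_i}}\|_{L^\infty}$, the factor $A^{2\sigma d}$ being the measure of the $2\sigma$ free convolution variables. The unimodular propagators leave moduli unchanged, and the time integral $\int_0^t(\cdot)\,d\tau$ upgrades the power $\tau^{\sum_i(k_i-1)/(2\sigma)}$ to $t^{(k-1)/(2\sigma)}$ because $\sum_i(k_i-1)/(2\sigma)+1=(k-1)/(2\sigma)$. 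Summing over the compositions $k_1+\dots+k_{2\sigma+1}=k$ and taming the combinatorics exactly as in Lemma~\ref{nkm}(4) (the $b_k\le b_1C_0^{k-1}$ trick) yields the displayed bound. This is the verbatim transcription of Lemma~\ref{nkm}(3), with the $L^2$ norm on each cube replaced by the $L^\infty$ norm, the only changes being the algebra constant $A^{d/2}$ replaced by $A^{d}$ and the initial datum $M=\|\psi_0\|_{M_A}$ replaced by $a$.

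For the support, Lemma~\ref{kms} gives $|\operatorname{supp}\widehat{U_k}(T)|\le C^kA^d$; since $\widehat{U_k}$ is, up to unimodular factors, an iterated convolution of translates of $\widehat{\psi_0}$, its support lies in a sumset of at most three\nobreakdash-cube sets and is covered by a family $\{\xi_j+Q_A\}_{j\in J}$ with $\xi_j\in A\Z^d$ and $\#J\le C^k$. Abbreviating $U_k=U_k[\psi_0]$, I then estimate
\[
\|U_k(T)\|_{\mathcal{F}L^p_s}^p=\sum_{j\in J}\int_{\xi_j+Q_A}\langle\xi\rangle^{sp}\,|\widehat{U_k}(T)|^p\,d\xi\ \le\ \|\widehat{U_k}(T)\|_{L^\infty}^p\sum_{j\in J}\int_{\xi_j+Q_A}\langle\xi\rangle^{sp}\,d\xi .
\]
Because $s<0$, the weight $\langle\cdot\rangle^{sp}$ is radially decreasing, so each integral is maximised when its cube is centred at the origin: by the Riesz rearrangement (convolution of symmetric decreasing functions is symmetric decreasing) one has $\int_{\xi_j+Q_A}\langle\xi\rangle^{sp}\,d\xi\le\int_{Q_A}\langle\xi\rangle^{sp}\,d\xi=\|\langle\cdot\rangle^s\|_{L^p(Q_A)}^p$. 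With $\#J\le C^k$ this yields $\|U_k(T)\|_{\mathcal{F}L^p_s}\le C^{k/p}\|\widehat{U_k}(T)\|_{L^\infty}\,\|\langle\cdot\rangle^s\|_{L^p(Q_A)}$; inserting the amplitude bound and the identities $a=RA^{-d/p}N^{-s}$ and $\rho_1=RN^{-s}A^{d(1-1/p)}T^{1/(2\sigma)}$ reproduces the right-hand side of \eqref{de2} after collecting the powers of $A,R,N,T$.

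The main obstacle is the amplitude bound of the second paragraph, and in particular recovering the sharp time exponent $T^{(k-1)/(2\sigma)}$ rather than the naive $T^{k-1}$: this is precisely where the combinatorial summation of Lemma~\ref{nkm}(4) is needed, now carried out in the $L^\infty$-based norm. A second point to check carefully is that the $L^\infty$ (rather than $L^2$) control is genuinely available and necessary: an $L^2$-based bound such as the one furnished directly by Lemma~\ref{nkm}(3) is too lossy by a factor $A^{d/2}$, since converting $L^2$ on a cube to the weighted $L^p$ norm by H\"older produces $\|\langle\cdot\rangle^s\|_{L^{r}(Q_A)}$ with $1/r=1/p-1/2$ in place of the gain-laden $A^{-d/2}\|\langle\cdot\rangle^s\|_{L^p(Q_A)}$. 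It is the flatness of $\widehat{\psi_0}=RA^{-d/p}N^{-s}\chi_\Omega$ on cubes, propagated through the recursion, that makes the stronger $L^\infty$ estimate correct.
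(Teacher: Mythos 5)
Your overall architecture is the same as the paper's: bound $\|U_k\|_{\mathcal{F}L^p_s}$ by $\|\langle\cdot\rangle^s\|_{L^p(\operatorname{supp}\widehat{U_k})}\cdot\|\widehat{U_k}\|_{L^\infty}$, control the support via Lemma~\ref{kms} and a rearrangement in the weight (your per-cube translation inequality is correct and equivalent, up to constants, to the paper's equimeasurable-ball comparison), and the constants assemble to \eqref{de2} exactly as you say. The difference is how the amplitude bound $\|\widehat{U_k}(T)\|_{L^\infty}\lesssim a\,(CaA^dT^{1/(2\sigma)})^{k-1}$ is obtained. The paper does \emph{not} rerun the Picard iteration in an $L^\infty$-based algebra: it unfolds Duhamel once, writes $\|v_{k_1}\ast\cdots\ast v_{k_{2\sigma+1}}\|_{L^\infty}\le\|v_{k_1}\|_{L^2}\|v_{k_2}\|_{L^2}\prod_{\ell\ge3}\|v_{k_\ell}\|_{L^1}$, converts the $L^1$ norms via Cauchy--Schwarz and Lemma~\ref{kms} ($\|v\|_{L^1}\le|\operatorname{supp}v|^{1/2}\|v\|_{L^2}$), and feeds in the already-proven $M_A$ bounds of Lemma~\ref{nkm}(3). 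The support factor $C^{k}$ is thus paid only once, at the top level, and no new induction is needed.

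The gap is in your induction. As written, your recursion is $b_k\lesssim C^k(A^d)^{2\sigma}\sum b_{k_1}\cdots b_{k_{2\sigma+1}}$, where the $k$-dependent constant comes from $\|v_{k_i}\|_{L^1}\le|\operatorname{supp}\widehat{U_{k_i}}|\,\|\widehat{U_{k_i}}\|_{L^\infty}$ with $|\operatorname{supp}\widehat{U_{k_i}}|\le C^{k_i}A^d$. Lemma~\ref{nkm}(4) requires a constant \emph{independent of} $k$; with a factor $C^{k}$ generated afresh at every node of the recursion tree the induction does not close to a geometric bound $b_k\le b_1C_0^{k-1}$ (a caterpillar-shaped tree accumulates $C^{ck^2}$), so the step ``taming the combinatorics exactly as in Lemma~\ref{nkm}(4)'' fails as stated. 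The fix is to make your phrase ``the sup-based analogue of $M_A$'' literal: propagate the norm $\sum_{\xi\in A\Z^d}\|\hat f\|_{L^\infty(\xi+Q_A)}$ (or simply track $\|\widehat{U_k}\|_{L^1}$ in the Wiener algebra alongside $\|\widehat{U_k}\|_{L^\infty}$). That norm is a genuine Banach algebra under pointwise multiplication with \emph{fixed} constant $C(d)A^d$, it dominates $\|\hat f\|_{L^\infty}$, and $\|\psi_0\|$ in it is $\sim a$; running the proof of Lemma~\ref{nkm}(3) there gives exactly your displayed amplitude bound. Alternatively, simply adopt the paper's one-step unfolding. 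Either repair is routine, but one of them is needed; the plain-$L^\infty$-plus-support-measure bookkeeping you wrote down is not enough.
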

\begin{proof} The Schr\"odinger group is a Fourier multiplier,
\begin{equation*}
\| U_1[\psi_0](T)\|_{\mathcal{F}L^p_s}  =  \left\| (e^{i\frac{t}{2}|\cdot|^2} \widehat{\psi_0} )\langle \cdot \rangle^s \right\|_{L^p} = \|\psi_0\|_{\mathcal{F}L^p_s} \leq CR, 
\end{equation*}
hence \eqref{de1}. We note that 
\begin{align*}
I& :=  \|U_k[\psi_0](T)\|_{\mathcal{F}L^p_s} \\
& \leq  \| \langle \cdot \rangle^s \|_{L^p( \operatorname{supp}\widehat{U_k}[\psi_0](t))} \sup_{\xi \in \R^d} \left|  \widehat{U_k[\psi_0]} (t, \xi)\right|\\
& \leq   \| \langle \cdot \rangle^s \|_{L^p( \operatorname{supp}
 \widehat{U_k}[\psi_0](t))} \sum_{k_1+\dots + k_{2\sigma +1} =k} \int_0^t \left\| |v_{k_1} (\tau)|\ast \dots \ast |v_{k_{2\sigma +1}}(\tau)| \right\|_{L^{\infty}} d\tau ,
\end{align*}
where $v_{k_{\ell}}$ is either $\widehat{U_{k_{\ell}}[\psi_0]}$
or $\widehat{\overline{ U_{k_{\ell}}[\psi_0]}}$.
By Young  and Cauchy-Schwarz inequalities,  
\begin{align*}
\|v_{k_1}\ast \cdots \ast v_{k_{2\sigma +1}}\|_{L^{\infty}} & \leq
   \|v_{k_1}\ast v_{k_2}\|_{L^{\infty}}\|v_{k_3} \ast\cdots
  \ast v_{k_{2\sigma +1}}\|_{L^1}\\
   &\leq  \|v_{k_1}^{\vee} v_{k_2}^{\vee}\|_{L^1} \prod_{\ell=3}^{2\sigma +1} \|v_{k_{\ell}}\|_{L^1}\\
  & \leq  \|v_{k_1} \|_{L^2} \| v_{k_2}\|_{L^2}\prod_{\ell=3}^{2\sigma+1} \|v_{k_{\ell}}\|_{L^1}\\
  &\leq \prod_{\ell=3}^{2\sigma +1} \left| \operatorname{supp}  \widehat{ U_{k_{\ell}}[\psi_0]} \right|^{1/2} \prod_{\ell=1}^{2\sigma +1} \| \widehat{ U_{k_{\ell}}[\psi_0]} \|_{L^2}.
\end{align*}
Thus, we  have 
\begin{equation*}
I  \leq   \| \langle \cdot \rangle^s \|_{L^p(
  \operatorname{supp}\widehat{U_k}[\psi_0](t))} I_1,
\end{equation*}
where
\begin{equation*}
  I_1:=\sum_{k_1+\cdots + k_{2\sigma +1} =k} \int_0^t \prod_{\ell=3}^{2\sigma +1} \left| \operatorname{supp} \widehat{ U_{k_{\ell}}[\psi_0]} (\tau) \right|^{1/2} \prod_{\ell=1}^{2\sigma +1} \| \widehat{ U_{k_{\ell}}[\psi_0]}(\tau) \|_{L^2}  d\tau.
\end{equation*}
 By Lemma \ref{nkm} \eqref{3}  (with $M= CR N^{-s} A^{\frac{d}{2}-\frac{d}{p}}$), we  have, for all $k\ge 1$,   
\[ \| U_{k}[\psi_0](t)\|_{L^2} \leq  \| U_{k}[\psi_0](t)\|_{M_A}  \leq  Ct^{\frac{k-1}{2\sigma}} \left( C^2R A^{d/2} N^{-s}  A^{\frac{d}{2}-\frac{d}{p}}\right)^{k-1} R N^{-s} A^{\frac{d}{2}- \frac{d}{p}}. \]
Note that, by Lemma \ref{kms}, 
\begin{align*}
I_1& \lesssim   \sum_{k_1+\dots + k_{2\sigma +1} =k} \int_0^t \prod_{\ell=3}^{2\sigma +1}  A^{d/2} \prod_{\ell=1}^{2\sigma +1}\left[ \tau^{\frac{k_{\ell}-1}{2\sigma}} \left( R A^{d \left(1-\frac{1}{p} \right)} N^{-s} \right)^{k_{\ell}-1} R N^{-s} A^{\frac{d}{2}-\frac{d}{p}}\right]  d\tau\\
& \lesssim    (RN^{-s})^{k}  A^{ \frac{d(2\sigma-1)}{2}} A^{d \left( 1- \frac{1}{p} \right) (k-2\sigma -1)} A^{\left( \frac{d}{2}- \frac{d}{p}\right) (2\sigma +1)} \int_0^t \tau^{\frac{k-2\sigma -1}{2\sigma}} d\tau \\
& \lesssim    A^{d \left( 1-\frac{1}{p} \right) (k-1)}  A^{-d/p}(RN^{-s})^{k} t^{\frac{k-1}{2\sigma}}.
\end{align*}
Since $s<0$, for any bounded set $D\subset \R^d$, we have 
\[ \left|  \{  \langle \xi \rangle^{s} > \lambda \} \cup D \right| \leq \left|  \{  \langle \xi \rangle^{s} > \lambda \} \cup B_D \right|,  \quad \forall\lambda>0,\]
where $B_D\subset \R^d$ is the ball centered at origin with $|D|=|B_D|.$  This implies that  $\|\langle \xi \rangle^s\|_{L^p(D)} \leq \|\langle \xi \rangle^s\|_{L^p(B_D)}.$ In view of this and  performing simple change of variables ($\xi= C^{k/d} \xi'$), we obtain 
\begin{equation*}
\| \langle \cdot \rangle^s \|_{L^p( \operatorname{supp}
  \widehat{U_k}[\psi_0](t))}\leq
\| \langle \cdot \rangle^s \|_{L^p ( \{ |\xi| \leq C^{k/d} A \})}
\lesssim   C^{k} \|\langle \cdot \rangle^s \|_{L^p(\{ |\xi| \leq A\})},
\end{equation*}
and the lemma follows.
\end{proof}
In the next lemma we establish a crucial lower bound on $U_{2\sigma+1}[\psi_0].$

\begin{lemma}\label{d2}  Let $1\leq p \leq  \infty,$  $1\leq A \ll N$ and $\sum=\{ Ne_d, - Ne_d, 2Ne_d\}$ where $e_d= (0,\dots,0, 1) \in \R^d$. If $0<T\ll N^{-2},$ then we   have 
\[ \|U_{2\sigma +1} [\psi_0] (T)\|_{\mathcal{F}L_s^p}  \gtrsim  R A^{-\frac{d}{p}} N^{-s} \rho_1^{2\sigma}\|\langle \cdot \rangle^s\|_{L^p (Q_A)}, \]
where  $\rho_1=  R N^{-s} A^{d- \frac{d}{p}} T^{\frac{1}{2\sigma}}$.
\end{lemma}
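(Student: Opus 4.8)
My plan is to reduce $U_{2\sigma+1}$ to a single explicit term, pass to the Fourier side, and exploit that the time $T\ll N^{-2}$ is so short that every internal oscillation is frozen, so that a single cluster interaction survives without cancellation. First I would note that $k=2\sigma+1$ forces the unique decomposition $k_1=\dots=k_{2\sigma+1}=1$ in Definition~\ref{imd}, whence
\[
U_{2\sigma+1}[\psi_0](T)=-i\int_0^T e^{i\frac{T-\tau}{2}\Delta}\,\mu_\sigma\big(U_1,\dots,U_1\big)(\tau)\,d\tau,\qquad U_1(\tau)=e^{i\frac\tau2\Delta}\psi_0 .
\]
Writing $c:=RA^{-d/p}N^{-s}$ so that $\widehat{\psi_0}=c\,\chi_\Omega$, and using $\widehat{U_1}(\tau,\eta)=e^{-i\frac\tau2|\eta|^2}c\,\chi_\Omega(\eta)$ together with $\widehat{\overline{U_1}}(\tau,\eta)=e^{i\frac\tau2|\eta|^2}c\,\chi_{-\Omega}(\eta)$, the convolution theorem and the time integration give
\[
\widehat{U_{2\sigma+1}[\psi_0]}(T,\xi)=-i\,e^{-i\frac T2|\xi|^2}c^{\,2\sigma+1}\int\Big(\prod_{\ell}\chi_\Omega(\xi_\ell)\prod_m\chi_{-\Omega}(\zeta_m)\Big)\Big(\int_0^T e^{i\frac\tau2\Phi}\,d\tau\Big)\,d\nu_\xi,
\]
where $d\nu_\xi$ is integration over $\{\sum_\ell\xi_\ell+\sum_m\zeta_m=\xi\}$ and $\Phi=|\xi|^2-\sum_\ell|\xi_\ell|^2+\sum_m|\zeta_m|^2$ is the resonance function.

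The decisive point is that on the support of the integrand all frequencies are $O(N)$, so $|\Phi|\lesssim N^2$ uniformly; since $0<\tau\le T\ll N^{-2}$, one has $\tau|\Phi|\ll1$, hence $\operatorname{Re}\int_0^T e^{i\frac\tau2\Phi}d\tau=\int_0^T\cos(\tfrac\tau2\Phi)\,d\tau\ge T/2$ for \emph{every} admissible interaction. Because the remaining amplitude $c^{2\sigma+1}\chi_\Omega\cdots\chi_{-\Omega}$ is nonnegative, taking real parts yields, with no cancellation between interactions,
\[
\big|\widehat{U_{2\sigma+1}[\psi_0]}(T,\xi)\big|\ \ge\ \frac{T}{2}\,c^{\,2\sigma+1}\,\big(\chi_\Omega^{*(\sigma+1)}*\chi_{-\Omega}^{*\sigma}\big)(\xi).
\]

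It then remains to bound the convolution from below near $\xi=0$. Here the choice $\Sigma=\{Ne_d,-Ne_d,2Ne_d\}$ is used: the configuration in which all $\sigma+1$ unconjugated frequencies lie in $Ne_d+Q_A$, one conjugated frequency lies in $-2Ne_d+Q_A\subset-\Omega$, and the remaining $\sigma-1$ conjugated frequencies lie in $-Ne_d+Q_A\subset-\Omega$, has total frequency $(\sigma+1)Ne_d-2Ne_d-(\sigma-1)Ne_d=0$; this is exactly the role of the extra mode $2Ne_d$. Since $A\ll N$ makes $\Omega$ a disjoint union of cubes, restricting the convolution to this single configuration is a legitimate lower bound and produces a translate of $(\chi_{Q_A})^{*(2\sigma+1)}$ centered at the origin, which by the one-dimensional B-spline estimate and the product structure of $\chi_{Q_A}$ is $\gtrsim A^{2\sigma d}$ on $Q_A$. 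Inserting this into $\|\cdot\|_{\mathcal{F}L^p_s}=\|\widehat{\cdot}\,\langle\cdot\rangle^s\|_{L^p}$ and integrating over $Q_A$ gives
\[
\|U_{2\sigma+1}[\psi_0](T)\|_{\mathcal{F}L^p_s}\ \gtrsim\ T\,c^{\,2\sigma+1}A^{2\sigma d}\,\|\langle\cdot\rangle^s\|_{L^p(Q_A)},
\]
and a direct check that $T\,c^{2\sigma+1}A^{2\sigma d}=RA^{-d/p}N^{-s}\rho_1^{2\sigma}$ with $\rho_1=RN^{-s}A^{d-d/p}T^{1/(2\sigma)}$ closes the argument.

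The main obstacle is the uniform control of the time integral: one must guarantee that the cosine lower bound $\ge T/2$ holds simultaneously for all interactions feeding the chosen output frequencies, which is precisely what $T\ll N^{-2}$ buys, since it dominates the worst resonance $|\Phi|\lesssim N^2$ (note that the selected configuration itself has $\Phi\approx 2N^2\neq0$, so genuine resonance plays no role). The secondary technical point is the geometric bookkeeping---selecting the zero-sum cluster configuration and quantifying the self-convolution of $\chi_{Q_A}$---but this is routine once the frequency set contains $2Ne_d$.
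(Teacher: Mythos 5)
Your proposal is correct and follows essentially the same route as the paper: pass to the Fourier side of the single Picard term $U_{2\sigma+1}$, use $T\ll N^{-2}$ and $|\Phi|\lesssim N^2$ to get $\operatorname{Re}\int_0^T e^{i\tau\Phi/2}\,d\tau\ge T/2$ uniformly, exploit the nonnegativity of the amplitudes to drop all but one zero-sum configuration built from $\Sigma=\{Ne_d,-Ne_d,2Ne_d\}$, and bound the resulting self-convolution of $\chi_{Q_A}$ below by $A^{2\sigma d}$ near the origin. The only difference is cosmetic: you make explicit the single configuration and the convolution lower bound, which the paper leaves implicit.
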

\begin{proof}
Note that  
\begin{equation*}
\widehat{U_{2\sigma +1}[\psi_0]}(T, \xi)= c e^{-i \frac{T}{2} |\xi|^2} \int_{\Gamma} \prod_{\ell=1}^{\sigma +1} \widehat{\psi_0} (\xi_{\ell}) \prod_{m=\sigma +2}^{ 2\sigma +1} \overline{ \widehat{\psi_0} (\xi_m)} \int_0^{T} e^{i\frac{t}{2} \Phi} dt d\xi_1...d\xi_{2\sigma +1},
\end{equation*}
where
\begin{align*}
  &\Gamma= \left \{  (\xi_1,\dots, \xi_{2\sigma +1})\in
    \R^{(2\sigma+1)d}:  \sum_{\ell=1}^{\sigma +1} \xi_{\ell} -
    \sum_{m= \sigma +2}^{2\sigma +1} \xi_m = \xi \right \}, \\
  &\Phi = |\xi|^2- \sum_{\ell=1}^{\sigma +1}|\xi_{\ell}|^2 +  \sum_{m=
    \sigma +2}^{2\sigma +1} |\xi_m|^2.  
\end{align*}
By  the choice of initial data \eqref{kid}, we  have 
\begin{align*}
 \int_{\Gamma} \prod_{\ell=1}^{\sigma +1} \widehat{\psi_0} (\xi_{\ell}) \prod_{m=\sigma +2}^{ 2\sigma +1} \overline{ \widehat{\psi_0} (\xi_m)}  & =   \int_{\Gamma} \prod_{\ell=1}^{\sigma +1}  R A^{-d/p} N^{-s} \chi_{\Omega} (\xi_{\ell}) \prod_{m=\sigma +2}^{ 2\sigma +1}  R A^{-d/p} N^{-s} \chi_{\Omega} (\xi_{m})  \\
 & =  \left( R A^{-d/p} N^{-s} \right)^{2\sigma +1} \int_{\Gamma}
   \prod_{\ell=1}^{2\sigma +1} \chi_{\Omega} (\xi_{\ell}) d\xi_1 \dots
   d\xi_{2\sigma +1}\\
 & =    \left( R A^{-d/p} N^{-s} \right)^{2\sigma +1}
   \sum_{\mathcal C} \int_{\Gamma}
   \prod_{\ell=1}^{2\sigma +1} \chi_{\eta_{\ell} + Q_{A}} (\xi_{\ell})
   d\xi_1 \dots d\xi_{2\sigma +1},
\end{align*}
where the sum is taken over the non-empty set
\[ \mathcal C =\left\{  (\eta_1, \dots, \eta_{2\sigma +1}) \in \{ \pm Ne_d, 2 Ne_d\}^{2\sigma +1}:  \sum_{\ell=1}^{\sigma+1} \eta_{\ell} - \sum_{m= \sigma +2}^{2\sigma +1} \eta_{m} =0 \right\} .\]
For $\xi \in Q_{A},$ we  have $|\xi_{i}|^2\leq |\xi|^2 \leq A^2 \ll
N^2$ and so  $\left| \Phi \right| \lesssim N^2.$ Then $|
\frac{t}{2}\Phi (\xi)|\ll 1$ for $0<T \ll N^{-2}.$  In view of this,
together with the  fact that  the  cosine function
decreasing on $[0, \pi/4],$ we obtain 
\begin{equation*}
\left| \int_0^{T} e^{i\frac{t}{2} \Phi(\xi)} dt\right| \geq  \RE\int_0^{T} e^{i\frac{t}{2} \Phi(\xi)} dt \geq  \frac{1}{2} T.
\end{equation*}
Taking the above inequalities   into account, we infer
\begin{equation}\label{lu}
\left| \widehat{U_{2\sigma +1}[\psi_0]}(T, \xi)\right| \gtrsim  \left( R A^{-d/p} N^{-s} \right)^{2\sigma +1}  (A^d)^{2\sigma} T \chi_{(2\sigma +1)^{-1} Q_{A}} (\xi).
\end{equation}
Hence, we  have 
\begin{align*}
\| U_{2\sigma +1}[\psi_0] (T) \|_{\mathcal{F}L^p_s} & \gtrsim   \left( R A^{-d/p} N^{-s} \right)^{2\sigma +1}  (A^d)^{2\sigma} T \|\langle \cdot \rangle^{s}\|_{L^p ( (2\sigma +1)^{-1} Q_A)}\\
& \gtrsim  R A^{-\frac{d}{p}} N^{-s} \rho_1^{2\sigma} \|\langle \cdot \rangle^s\|_{L^p (Q_A)},
\end{align*}
where $\rho_1=  R N^{-s} A^{d- \frac{d}{p}} T^{\frac{1}{2\sigma}}$.
\end{proof}
For the convenience of  reader, we compute  the  $L^p$-norm of  weight
$ \langle \cdot \rangle^s $ on the cube $Q_A.$ 
\begin{lemma}\label{dnl} Let $A\gg 1$, $d\ge 1$, $s<0$ and $1\leq p <
  \infty$. We define 
\begin{equation*} f^p_s(A)= 
\begin{cases} 1 \quad \text{if} \ s< -\frac{d}{p},\\
\left( \log A \right)^{1/p} \quad \text{if} \ s= -\frac{d}{p},\\
A^{d/p+s} \quad \text{if} \  s> -\frac{d}{p}.
\end{cases}
\end{equation*}
Then we have 
$f^p_s(A) \lesssim \left\|\langle \cdot \rangle^s \right\|_{L^p(Q_A)} \lesssim f^p_s(A)$ and  $f_s^{\infty}(A)= \left\|\langle \cdot \rangle^s \right\|_{L^{\infty}(Q_A)} \sim 1. $ 
In particular,  $f^{p}_s(A) \gtrsim  A^{\frac{d}{p}+s}$ for any $s<0.$
\end{lemma}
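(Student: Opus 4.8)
The plan is to reduce everything to an elementary radial integral and to isolate the borderline exponent $s=-d/p$. Since $s<0$, the weight $\langle\xi\rangle^s=(1+|\xi|^2)^{s/2}$ is a radially nonincreasing function of $|\xi|$, attaining its maximum value $1$ at the origin. This immediately settles the $L^\infty$ statement: the essential supremum of $\langle\cdot\rangle^s$ over $Q_A$ is $\langle 0\rangle^s=1$, so $f_s^\infty(A)\sim 1$.

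For the $L^p$ statement with $1\le p<\infty$, first I would sandwich the cube between two balls. Because $Q_A=[-A/2,A/2)^d$ satisfies $B(0,A/2)\subset Q_A\subset B(0,\sqrt d\,A/2)$ and the integrand is nonnegative, monotonicity of the integral gives
\[
\int_{B(0,A/2)}\langle\xi\rangle^{sp}\,d\xi\le \int_{Q_A}\langle\xi\rangle^{sp}\,d\xi\le \int_{B(0,\sqrt d\,A/2)}\langle\xi\rangle^{sp}\,d\xi .
\]
Both outer integrals are of the form $\int_{B(0,cA)}\langle\xi\rangle^{sp}\,d\xi$ with $c$ a purely dimensional constant, so up to constants it suffices to estimate this single quantity.

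Next I would pass to polar coordinates, writing $\int_{B(0,cA)}\langle\xi\rangle^{sp}\,d\xi=\omega_{d-1}\int_0^{cA}(1+r^2)^{sp/2}r^{d-1}\,dr$, where $\omega_{d-1}$ is the measure of the unit sphere. For $r\ge 1$ one has $r^2\le 1+r^2\le 2r^2$, and raising to the negative power $sp/2$ yields $(1+r^2)^{sp/2}\sim r^{sp}$; hence the integrand is comparable to $r^{sp+d-1}$ on $[1,cA]$, while the contribution of $[0,1]$ is a bounded constant. The sign of the exponent $sp+d$ then separates the three regimes: when $s>-d/p$ one gets $\int_1^{cA}r^{sp+d-1}\,dr\sim A^{sp+d}$; at the borderline $s=-d/p$ one gets $\int_1^{cA}r^{-1}\,dr\sim\log A$; and when $s<-d/p$ the integral converges as $A\to\infty$, giving a bounded contribution. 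Taking $p$-th roots reproduces $f_s^p(A)$ in each case, with both the sandwiching constants and the constant in $(1+r^2)^{sp/2}\sim r^{sp}$ absorbed into the implicit constants. Finally, the pointwise bound $f_s^p(A)\gtrsim A^{d/p+s}$ follows case by case: it is an equality when $s>-d/p$, it reduces to $(\log A)^{1/p}\gtrsim 1$ when $s=-d/p$ (valid since $A\gg1$), and it reduces to $1\gtrsim A^{d/p+s}$ when $s<-d/p$ (valid since the exponent is then negative).

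There is no serious obstacle here; the computation is entirely routine. The only points requiring mild care are making the comparison $(1+r^2)^{sp/2}\sim r^{sp}$ uniform away from the origin and correctly identifying the logarithmic borderline case $s=-d/p$, both of which are handled by the splitting of the radial integral at $r=1$.
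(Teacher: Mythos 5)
Your proof is correct and follows essentially the same route as the paper's: sandwich the cube between balls, pass to a radial integral, split at $r=1$, and compare $(1+r^2)^{sp/2}$ with $r^{sp}$ to identify the three regimes. If anything, your explicit two-sided ball inclusion $B(0,A/2)\subset Q_A\subset B(0,\sqrt d\,A/2)$ makes the lower bound slightly more transparent than the paper's terser treatment.
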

\begin{proof}
We first compute the  $\|\cdot\|_{L^{p}}$-norm on  ball of  radius $R_1$ in $\R^d$, say $B_{R_1}(0).$   Since $\langle \cdot \rangle^s$ is radial, we  have 
\[I(R_1):= \int_{B_{R_1}(0)} \frac{1}{(1+|\xi|^2)^{-sp/2}}  d\xi =   \frac{2 \pi^{d/2}}{\Gamma(d/2)} \int_0^{R_1} \frac{r^{d-1}}{(1+r^2)^{-sp/2}} dr.\]
Notice that $(1+r^2)^{-sp/2} \geq \max\{1,r^{-sp}\}$, and assuming that $R_1 \gg 1$, we obtain:
\begin{align*}
I(R_1)  & \lesssim   \int_0^1 \frac{r^{d-1}}{\max\{1,r^{-sp}\}} \ \text{d} t + \int_1^{R_1} \frac{r^{d-1}}{\max\{1,r^{-sp}\}}  dr \\
& =    \int_0^1 r^{d-1} dr + \int_1^{R_1} \frac{1}{r^{-sp-d+1}}  dr.
\end{align*}
Using conditions on $s,$ we have $I(R_1) \lesssim \left(
  f_s^p(R_1)\right)^{p}.$ Notice that $Q_{A} \subset
B_{\sqrt{d}A/2}(0),$ we have  $\| \langle \cdot \rangle^s \|_{L^p
  (Q_A)} \leq  \left( I (\sqrt{d} A/2) \right)^{1/p} \lesssim f_s^p(A).$
On the other hand, we notice that $1+r^2 \leq 2$ if $0<r<1$ and $1+r^2
\leq 2r^2$ if $1<r< R_2$ for  some appropriate $R_2$. Using this
together with the above ideas, we obtain  $ f_s^p(A) \lesssim \| \langle
\cdot \rangle^s \|_{L^p (Q_A)}.$ 
This completes the proof.
\end{proof}
\begin{proof}[Proof of Theorem \ref{theo:inflationFLp}]   By
  Corollary~\ref{uc}, we have  the existence of a unique solution  to
  \eqref{nls} in $M_{A}$  up to time  $T$ whenever  $\rho_1=RN^{-s}
  A^{d \left( 1- \frac{1}{p}\right)} T^{1/(2\sigma)} \ll 1$.  In view
  of  Lemma~\ref{d1} and since $\rho_1<1$,  $\sum_{\ell =2}^{\infty}
  \| U_{2\sigma \ell +1}[\psi_0] (T)\|_{\mathcal{F}L^{p}_s} $ can be
  dominated by the  sum of the geometric series. Specifically, we have  
\begin{align}\label{mv1}
\left\| \sum_{\ell =2}^{\infty} U_{2\sigma \ell +1}[\psi_0] (T) \right\|_{\mathcal{F}L^p_s} & \lesssim   A^{-d/2} R N^{-s} f_s^p(A) \sum_{\ell=2}^{\infty} \rho_1^{2\sigma \ell} \nonumber \\
& \lesssim  A^{-d/2} R N^{-s} f_s^p(A)\rho_1^{4\sigma}.
\end{align}
By Corollary \ref{uc} and the triangle  inequality,  we  obtain 
\begin{align*}
\|\psi(T)\|_{\mathcal{F}L^p_s} & = \left\| \sum_{\ell =0}^{\infty} U_{2\sigma \ell +1}[\psi_0] \right\|_{\mathcal{F}L^p_s}  \\
& \geq  \| U_{2\sigma +1}[\psi_0](T)\|_{\mathcal{F}L^p_s} - \|U_1[\psi_0](T)\|_{\mathcal{F}L^p_s}- \left\| \sum_{\ell =2}^{\infty} U_{2\sigma \ell +1}[\psi_0](T) \right\|_{\mathcal{F}L^p_s}.
\end{align*}
 In order to ensure 
\[ \|\psi(T)\|_{\mathcal{F}L^p_s} \gtrsim  \| U_{2\sigma +1}[\psi_0](T)\|_{\mathcal{F}L^p_s},   \]
we rely on the conditions 
\begin{align}\label{rc1}
 &\| U_{2\sigma +1}[\psi_0](T)\|_{\mathcal{F}L^p_s}  \gg
  \|U_1[\psi_0](T)\|_{\mathcal{F}L^p_s},\\
\label{rc2}
&  \| U_{2\sigma +1}[\psi_0](T)\|_{\mathcal{F}L^p_s}  \gg  \left\| \sum_{\ell =2}^{\infty} U_{2\sigma \ell +1}[\psi_0](T) \right\|_{\mathcal{F}L^p_s}.
\end{align}
To use Lemma \ref{d2}, we  require
\begin{itemize}
\item[(i)]  $T\ll N^{-2}.$
\end{itemize}
In view of Lemma \ref{d1},  to prove \eqref{rc1} it is sufficient to prove 
\begin{itemize}
\item[(ii)]  $R \rho_1^{2\sigma} A^{-\frac{d}{p}} N^{-s} f^p_s(A) \gg
  R$, with $\rho_1=  R N^{-s} A^{d- \frac{d}{p}} T^{\frac{1}{2\sigma}}$.
\end{itemize}
Finally, in view  of Lemmas \ref{d1}, \ref{d2} and \ref{dnl}, and \eqref{mv1},   to prove \eqref{rc2}  it is sufficient to prove:\begin{itemize}
\item[(iii)] $\rho_1\ll 1$,
\item[(iv)] $R \rho_1^{2\sigma} A^{-\frac{d}{p}} N^{-s} f^p_s(A) \gg R
 \rho_1^{4\sigma} A^{-\frac{d}{p}}  N^{-s} f_s^p(A).$
\end{itemize}
We now choose $A, R$ and $T$ so that conditions  (i)- (iv) are
satisfied. To this end,  we set  
\[ R= (\log N)^{-1}, \quad A \sim (\log N)^{-\frac{2\sigma +2}{|s|}} N, \quad  T= (A^{d(\frac{1}{p}-1)} N^{s})^{2\sigma}. \]
Then we have
\[\rho_1= R N^{-s} A^{d- \frac{d}{p}} T^{\frac{1}{2\sigma}}=(\log N)^{-1}\ll 1.\]
  Hence, condition  (iii) is satisfied and so condition (iv). Note that
\[T=  (\log N)^{-\frac{2\sigma +2}{|s|} d (\frac{1}{p} -1) 2\sigma} N^{d (\frac{1}{p} -1) 2\sigma + 2\sigma s}.\]
Since $s< d \left( 1-\frac{1}{p} \right)- \frac{1}{\sigma}$ and $\log N = \mathcal{O}(N^{\epsilon})$ for any $\epsilon>0,$ we   have 
\[ T \ll N^{-2}, \]
and hence  (i) is satisfied.
By Lemma \ref{dnl}, we have  $f^{p}_s(A) \gtrsim  A^{\frac{d}{p}+s}$ for any $s<0$ and $A\geq 1$ and so 
\[ R \rho_1^{2\sigma} A^{-\frac{d}{p}} N^{-s} f^p_s(A)  \gtrsim \log N
  \gg  (\log N)^{-1}=R\] 
and hence (ii) is satisfied. Thus,  we have $\|\psi(T)\|_{\mathcal{F}L^p_s} \gtrsim  \| U_{2\sigma +1}[\psi_0](T)\|_{\mathcal{F}L^p_s} \gtrsim  \log N.$
Since $\|\psi_0\|_{\mathcal{F}L^p_s} \sim R = (\log N)^{-1}$ and $T\ll N^{-2},$ we get norm inflation by letting $N\to \infty.$
\end{proof}

\section{Norm inflation in modulation spaces}
\label{sec:inflation-modulation}

The proof of Theorem~\ref{theo:inflation-modulation} follows the same
general lines as the proof of Theorem~\ref{theo:inflationFLp} from the
previous section.
Let $N, A$  be dyadic numbers to be specified so that   $N\gg 1$ and $0<A\ll N$. We choose initial data  of the following form
\begin{equation}\label{kidm}
\widehat{\psi_0} =
\begin{cases}
  R A^{-d/2} N^{-s} \chi_{\Omega},\quad\text{if}\quad 1\le q\le 2,\\
  R A^{-d/q} N^{-s} \chi_{\Omega},\quad\text{if}\quad 2\le q\le \infty,
\end{cases}
\end{equation}
where 
\begin{equation*}
  \Omega =
\bigcup_{\eta \in \sum} (\eta + Q_A)  ,\quad Q_A=[-A/2, A/2),
\end{equation*}
for some $\sum \subset \{ \xi \in \R^d:  |\xi| \sim N\}$ such that $\#
\sum \leq 3$. 

\subsection{A priori estimates: $1\le q\le 2$}

Then we have, for any $s\in \R$,
\[ \| \psi_0\|_{H^s} \sim R,  \quad \|\psi_0\|_{M_A} \sim  RN^{-s}.\]

\begin{lemma}\label{dt1}  Let $q\in [1,2]$, $\psi_0$ given by
  \eqref{kidm}, $s<0.$ Then there exists $C>0$ depending  only on $d,
  \sigma$ and $s$ such that following holds.
  \begin{align}
&    \label{dt1p1}  \|U_1[\psi_0](T)\|_{M^{2,q}_s} \leq C R ,\quad
                  \forall T\ge 0,\\
&    \label{dtp2}  \| U_k[\psi_0](T)\|_{M^{2,q}_s} \lesssim
  \rho^{k-1} C^{k} A^{-d/2} RN^{-s} \left\| (1+|n|)^s
  \right\|_{\ell^q\left( 0\leq |n| \leq  A \right)}, 
  \end{align}
where   $\rho=  R N^{-s} A^{d/2} T^{\frac{1}{2\sigma}}$.
\end{lemma}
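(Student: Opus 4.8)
The plan is to work throughout with the frequency-uniform (Wiener amalgam) characterization of the modulation norm, $\|f\|_{M^{2,q}_s}\asymp\(\sum_{n\in\Z^d}\|\square_n f\|_{L^2}^q(1+|n|)^{sq}\)^{1/q}$, which plays the role that $\|\widehat{f}\langle\cdot\rangle^s\|_{L^p}$ played in Lemma~\ref{d1}. Estimate \eqref{dt1p1} is then obtained from the fact that the first Lebesgue index is $2$: since $\square_n$ commutes with the unitary Fourier multiplier $e^{i\frac T2\Delta}$, one has $\|\square_n U_1[\psi_0](T)\|_{L^2}=\|\square_n\psi_0\|_{L^2}$ for every $n$, so $e^{i\frac T2\Delta}$ acts isometrically on $M^{2,q}_s$. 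This reduces \eqref{dt1p1} to the bound on the data norm $\|\psi_0\|_{M^{2,q}_s}$, computed directly from \eqref{kidm} by the same frequency-localized calculation that gives $\|\psi_0\|_{H^s}\sim R$. The substance of the lemma is \eqref{dtp2}, which I would prove by transcribing the proof of Lemma~\ref{d1}, the only genuinely new ingredient being the passage from the $L^2_x$–$\ell^q_\xi$ structure to the scalar quantity $\|\widehat{U_k}\|_{L^\infty}$ already controlled there.

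For \eqref{dtp2} the first step is a factorization of the modulation norm. Since $\widehat{\square_n U_k}=\sigma_n\widehat{U_k}$ with $\sigma_n$ supported in a unit cube and $\|\sigma_n\|_{L^2}\lesssim 1$, we get $\|\square_n U_k[\psi_0](T)\|_{L^2}\le\|\sigma_n\|_{L^2}\,\|\widehat{U_k}\|_{L^\infty}\lesssim\|\widehat{U_k[\psi_0]}(T)\|_{L^\infty}$ uniformly in $n$. Pulling this supremum out of the $\ell^q$ sum leaves only the weight over the cells meeting $\operatorname{supp}\widehat{U_k}$, that is $\|U_k[\psi_0](T)\|_{M^{2,q}_s}\lesssim\|\widehat{U_k}\|_{L^\infty}\,\|(1+|n|)^s\|_{\ell^q(n:\,Q_n\cap\operatorname{supp}\widehat{U_k}\ne\emptyset)}$. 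The scalar factor is estimated exactly as in Lemma~\ref{d1}: the Young and Cauchy--Schwarz chain bounds each convolution of $2\sigma+1$ terms by $\prod_{\ell\ge 3}|\operatorname{supp}\widehat{U_{k_\ell}}|^{1/2}\prod_\ell\|U_{k_\ell}\|_{L^2}$, after which Lemma~\ref{nkm}(\ref{3}) with $M=\|\psi_0\|_{M_A}\sim RN^{-s}$ and the support bound of Lemma~\ref{kms} give, upon summing over compositions and integrating in time, $\|\widehat{U_k[\psi_0]}(T)\|_{L^\infty}\lesssim C^k A^{\frac d2(k-2)}(RN^{-s})^k T^{\frac{k-1}{2\sigma}}$ (for $k=1$ this reproduces $\|\widehat{\psi_0}\|_{L^\infty}\sim RA^{-d/2}N^{-s}$).

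It remains to bound the sparse weighted $\ell^q$-factor. Because $s<0$ the sequence $(1+|n|)^{sq}$ is radially decreasing, so its sum over any collection of cells is controlled by its sum over a centered ball with at least as many cells; by Lemma~\ref{kms} the support of $\widehat{U_k}$ meets at most $\lesssim C^k A^d$ cells, whence $\|(1+|n|)^s\|_{\ell^q(\operatorname{supp})}\le\|(1+|n|)^s\|_{\ell^q(|n|\le C^{k/d}A)}\lesssim C^k\|(1+|n|)^s\|_{\ell^q(|n|\le A)}$, the last step using that $\Lambda\mapsto\|(1+|n|)^s\|_{\ell^q(|n|\le\Lambda)}$ grows at most like $\Lambda^{d/q+s}$, so the dilation costs only a factor absorbed into $C^k$. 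Multiplying the two factors and writing $A^{\frac d2(k-2)}=A^{\frac d2(k-1)}A^{-d/2}$ exhibits the geometric ratio $\rho=RN^{-s}A^{d/2}T^{1/(2\sigma)}$ and yields precisely \eqref{dtp2}.

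The main obstacle, and the only real departure from Lemma~\ref{d1}, lies in the very first reduction: one must eliminate the $L^2_x$-norm inside each frequency cell \emph{without} squandering the support-size gain that drives the estimate. The resolution is to separate the two effects --- the crude bound $\|\square_n U_k\|_{L^2}\lesssim\|\widehat{U_k}\|_{L^\infty}$ throws away all volume information, which is then recovered, with the correct weight, through the cardinality bound $\lesssim C^k A^d$ inside the $\ell^q$-sum. Checking that the accumulated powers of $A$ collapse to exactly $A^{\frac d2(k-2)}$, so that the per-iterate ratio is $\rho$ and not a larger quantity, is the one computation requiring care; the remainder is a faithful translation of the Fourier--Lebesgue argument with $p$ replaced by the pair $(2,q)$ and the Lebesgue weight $\|\langle\cdot\rangle^s\|_{L^p(Q_A)}$ replaced by its lattice analogue $\|(1+|n|)^s\|_{\ell^q(|n|\le A)}$.
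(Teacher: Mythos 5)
Your proposal follows essentially the same route as the paper: estimate \eqref{dtp2} is obtained exactly as in the paper's proof, by pulling $\sup_\xi\lvert\widehat{U_k[\psi_0]}\rvert$ out of the frequency-uniform $\ell^q$-sum, restricting the weight to the $\lesssim C^kA^d$ cells meeting the support via the rearrangement of the decreasing weight, and recycling the $L^\infty$ bound from Lemma~\ref{d1} with $p=2$, which yields the same factor $\rho^{k-1}A^{-d/2}RN^{-s}$. For \eqref{dt1p1} your unitarity observation is if anything cleaner than the paper's appeal to Proposition~\ref{uf} (it gives a constant uniform in $T$); be aware, though, that the data-norm computation you defer to actually yields $\|\psi_0\|_{M^{2,q}_s}\sim RA^{d(\frac1q-\frac12)}$ rather than $\lesssim R$ when $q<2$ --- a point the paper's own proof (which asserts $\|\psi_0\|_{M^{2,1}_s}\lesssim R$) glosses over in exactly the same way, so this is a shared subtlety rather than a defect of your argument relative to the paper's.
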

\begin{proof}
 By Lemma \ref{rl} and   Proposition \ref{uf}, we have
\[\| U_{1}[\psi_0](T)\|_{M^{2,q}_s} \lesssim \|\psi_0(T)\|_{M^{2,q}_s}
  \lesssim  \| \psi_0(T)\|_{M^{2,1}_s} \lesssim R,\]
hence \eqref{dt1p1}.
By  Plancherel theorem and \eqref{fh}, for $s<0,$ we have 
\begin{align*}
\| U_k[\psi_0](T)\|_{M^{2,q}_s} & =  \left\|(1+|n|)^s \|\sigma_n \widehat{U_k[\psi_0]}(T)\|_{L^2} \right\|_{\ell^q}\\
& \leq    \sup_{\xi \in \R^d} \left|  \widehat{U_k[\psi_0]} (t, \xi)\right|  \left\| (1+|n|)^s  \left\| \sigma_n \right\|_{L^2 \left(Q_n \cap\  \operatorname{supp} \widehat{U_k[\psi_0]}(t) \right)} \right\|_{\ell^q}\\
& \leq   \sup_{\xi \in \R^d} \left|  \widehat{U_k[\psi_0]} (t, \xi)\right|  \left\| (1+|n|)^s  \right\|_{\ell^q\left( 0\leq |n| \leq  C A \right)}.
\end{align*}
This yields the desired inequality in \eqref{dtp2}.
\end{proof}

\begin{lemma}\label{dt2} Let $s<0$, $q\in [1,2]$,  $2\leq A \ll N$ and    $\sum=\{ Ne_d, - Ne_d, 2Ne_d\}$ where $e_d= (0,\dots,0, 1) \in \R^d.$ 
If $0<T\ll N^{-2},$ then we   have 
\[ \|U_{2\sigma +1} [\psi_0] (T)\|_{M^{2,q}_s}  \gtrsim   R
  A^{-\frac{d}{2}} N^{-s} \rho^{2\sigma}
  \left\| (1+|n|)^s  \right\|_{\ell^q\left( 0\leq |n| \leq  A \right)},\]
where $\rho=  R N^{-s} A^{d/2} T^{\frac{1}{2\sigma}}.$
\end{lemma}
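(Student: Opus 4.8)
The plan is to compute a lower bound on the $M^{2,q}_s$-norm of $U_{2\sigma+1}[\psi_0](T)$ by isolating the frequency-uniform components $\square_n$ supported where the Picard iterate is genuinely large, mirroring the Fourier-Lebesgue argument in Lemma~\ref{d2} but now working with the $\ell^q$-sum over $n\in\Z^d$ coming from the frequency-uniform characterization of $M^{2,q}_s$. First I would recall from the proof of Lemma~\ref{d2} the pointwise lower bound on the Fourier side: with the choice $\sum=\{Ne_d,-Ne_d,2Ne_d\}$ the resonant set $\mathcal C$ is non-empty, the phase $\Phi$ satisfies $|\tfrac{t}{2}\Phi|\ll 1$ for $0<T\ll N^{-2}$ and $\xi$ in a small cube, so the time integral $\int_0^T e^{i\frac t2\Phi}dt$ has modulus $\gtrsim T$. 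This gives, exactly as in \eqref{lu},
\begin{equation*}
\bigl|\widehat{U_{2\sigma+1}[\psi_0]}(T,\xi)\bigr|\gtrsim \bigl(RA^{-d/2}N^{-s}\bigr)^{2\sigma+1}(A^d)^{2\sigma}\,T\,\chi_{(2\sigma+1)^{-1}Q_A}(\xi),
\end{equation*}
the only change from the Fourier-Lebesgue case being the normalization exponent $A^{-d/2}$ in place of $A^{-d/p}$, consistent with the choice \eqref{kidm} for $1\le q\le2$.

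Next I would convert this pointwise Fourier bound into the modulation norm. Using $\|f\|_{M^{2,q}_s}\asymp\bigl(\sum_n\|\square_n f\|_{L^2}^q(1+|n|)^{sq}\bigr)^{1/q}$ and Plancherel, $\|\square_n U_{2\sigma+1}[\psi_0](T)\|_{L^2}=\|\sigma_n\widehat{U_{2\sigma+1}[\psi_0]}(T)\|_{L^2}$. Since the lower bound above is supported in a cube of side $\sim A$ centered at the origin, the relevant indices $n$ are precisely those with $0\le|n|\lesssim A$, and on each such $Q_n$ the function $\sigma_n$ is bounded below by a constant $c$ on $Q_n$ by \eqref{fh}. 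Restricting the $\ell^q$-sum to those $n$ and using the uniform lower bound on $|\widehat{U_{2\sigma+1}[\psi_0]}(T,\cdot)|$ on the cube $(2\sigma+1)^{-1}Q_A$ yields
\begin{equation*}
\|U_{2\sigma+1}[\psi_0](T)\|_{M^{2,q}_s}\gtrsim \bigl(RA^{-d/2}N^{-s}\bigr)^{2\sigma+1}(A^d)^{2\sigma}\,T\,\bigl\|(1+|n|)^s\bigr\|_{\ell^q(0\le|n|\le A)},
\end{equation*}
after which collecting the powers of $A$, $N$, $R$ and $T$ into the quantity $\rho=RN^{-s}A^{d/2}T^{1/(2\sigma)}$ gives exactly the claimed estimate, since $(RA^{-d/2}N^{-s})^{2\sigma}(A^d)^{2\sigma}T=\rho^{2\sigma}$.

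The main obstacle, and the step deserving the most care, is the passage from the pointwise Fourier lower bound to a genuine lower bound on the $\ell^q$-weighted sum of the $\square_n$-pieces. One must be sure that the lower bound on $|\widehat{U_{2\sigma+1}[\psi_0]}(T,\xi)|$ holds on a set large enough that, after multiplying by $\sigma_n$ (which is only guaranteed $\ge c$ on $Q_n$, not on all of its support) and integrating in $L^2$, each contributing $\square_n$-norm is bounded below uniformly; this requires checking that the cube $(2\sigma+1)^{-1}Q_A$ meets the unit cubes $Q_n$ for a full range of $n$ with $|n|\lesssim A$, so that the weighted $\ell^q$ sum reconstructs $\|(1+|n|)^s\|_{\ell^q(0\le|n|\le A)}$ rather than a thinned-out subsum. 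The potential cancellation in the convolution defining $\widehat{U_{2\sigma+1}}$ is already handled by the non-emptiness of $\mathcal C$ and the sign of the real part of the time integral, exactly as in Lemma~\ref{d2}, so no new difficulty arises there; the bookkeeping of the frequency localization is the only genuinely new ingredient relative to the Fourier-Lebesgue proof.
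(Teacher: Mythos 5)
Your proposal is correct and follows essentially the same route as the paper: the pointwise Fourier lower bound \eqref{lu} (with $A^{-d/p}$ replaced by $A^{-d/2}$ per \eqref{kidm}), then Plancherel and the frequency-uniform characterization with $\sigma_n\ge c$ on $Q_n$ from \eqref{fh}, and finally the $\ell^q$-sum over $|n|\lesssim A$; the bookkeeping $(RA^{-d/2}N^{-s})^{2\sigma}(A^d)^{2\sigma}T=\rho^{2\sigma}$ matches. The point you flag about the cube $(2\sigma+1)^{-1}Q_A$ versus $Q_A$ is real but harmless, since $\|(1+|n|)^s\|_{\ell^q(0\le|n|\le cA)}\asymp\|(1+|n|)^s\|_{\ell^q(0\le|n|\le A)}$ for fixed $c>0$.
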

\begin{proof} By \eqref{fh}, we note that 
\begin{align*}
\|U_{2\sigma +1}[\psi_0](T)\|_{M^{2,q}_s}^q & =   \sum_{n\in \Z^d} \| \square_n(U_{2\sigma +1}[\psi_0](T))\|^q_{L^2} (1+|n|)^{sq} \\
& =  \sum_{n\in \Z^d} \| \sigma_n \widehat{ U_{2\sigma +1}[\psi_0]}(T)\|^q_{L^2} (1+|n|)^{sq}\\
&\gtrsim   \sum_{n\in \Z^d} \frac{1}{(1+|n|)^{-sq}} \left( \int_{Q_n} | \widehat{ U_{2\sigma +1}[\psi_0]}(\xi, T)|^{2} d\xi\right)^{q/2},
\end{align*}
where $Q_n$ is a unit cube centered at $n\in \Z^d.$
Arguing as before in the proof of Lemma~\ref{d2}   (specifically, by \eqref{lu}), for $\xi \in Q_A= [-A/2, A/2)^d$, we  have 
\begin{equation*}
\left| \widehat{U_{2\sigma +1}[\psi_0]}(T, \xi)\right| \gtrsim  \left( R A^{-d/2} N^{-s} \right)^{2\sigma +1}  (A^d)^{2\sigma} T \chi_{(2\sigma +1)^{-1} Q_{A}} (\xi).
\end{equation*}
It follows that 
\begin{align*}
\|U_{2\sigma +1}[\psi_0](T)\|_{M^{2,q}_s}
&\gtrsim     \left( R A^{-d/2} N^{-s} \right)^{2\sigma +1}  (A^d)^{2\sigma} T \left( \sum_{|n|=  \lfloor -A/2 \rfloor}^{\lfloor A/2 \rfloor} \frac{1}{(1+|n|)^{-sq}}  \right)^{1/q}\\
& \gtrsim  R A^{-d/2} N^{-s} \rho^{2\sigma}  \left( \sum_{|n|=  \lfloor -A/2 \rfloor}^{\lfloor A/2 \rfloor} \frac{1}{(1+|n|)^{-sq}}  \right)^{1/q},
\end{align*}
where the floor  function is ${\displaystyle \lfloor x\rfloor =\max\(m\in \Z \mid m\leq x\)}$ and $\rho=  R N^{-s} A^{d/2} T^{\frac{1}{2\sigma}}$.
\end{proof}

\subsection{A priori estimates: $2\le q\le \infty$}

Then we have, for any $s\in \R,$ 
\[ \| \psi_0\|_{\mathcal{F}L^q_s} \sim R,  \quad \|\psi_0\|_{M_A} \sim  R A^{d (\frac{1}{2}-\frac{1}{q})} N^{-s}.\]

\begin{lemma} \label{kb1}  Let $s<0$ and $2 \leq q \leq  \infty.$ Then
  there exists $C>0$ depending  only on $d, \sigma$ and $s$ such that
  following holds.
  \begin{align}
    &\label{skb1}  \|U_1[\psi_0](T)\|_{M^{2,q}_s} \leq C R ,\quad
      \forall T\geq 0,\\
    &\label{skb2}  \| U_k[\psi_0](T)\|_{M^{2,q}_s} \lesssim \rho_2^{k-1} C^{k} A^{-d/q} RN^{-s}  \left\| (1+|n|)^s  \right\|_{\ell^q\left( 0\leq |n| \leq   A \right)},
  \end{align}
where  $\rho_2=  R N^{-s} A^{d \left( 1- \frac{1}{q} \right)} T^{\frac{1}{2\sigma}}.$ 
\end{lemma}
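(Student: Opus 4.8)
The plan is to follow the proofs of Lemma~\ref{d1} and Lemma~\ref{dt1} almost verbatim, the only structural change being that the normalization $A^{-d/2}$ is replaced by $A^{-d/q}$, and that the amplitude used in Lemma~\ref{nkm}\eqref{3} is taken to match $\|\psi_0\|_{M_A}\sim RN^{-s}A^{d(1/2-1/q)}$ (recorded at the start of this subsection).

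For \eqref{skb1}, I would first observe that $e^{i\frac{t}{2}\Delta}$ is a Fourier multiplier of modulus one, so that $|\sigma_n\widehat{U_1[\psi_0]}(T)|=|\sigma_n\widehat{\psi_0}|$ pointwise and hence $\|\square_n U_1[\psi_0](T)\|_{L^2}=\|\square_n\psi_0\|_{L^2}$ for every $n$ and every $T\ge 0$. Consequently $\|U_1[\psi_0](T)\|_{M^{2,q}_s}=\|\psi_0\|_{M^{2,q}_s}$ uniformly in $T$, and it suffices to check $\|\psi_0\|_{M^{2,q}_s}\sim R$. This follows directly from \eqref{fh}: since $\widehat{\psi_0}$ is supported on the union of at most three cubes of side $A$ located at frequencies of size $\sim N$, only $\sim A^d$ of the boxes $\square_n$ contribute, each satisfying $\|\square_n\psi_0\|_{L^2}\sim RA^{-d/q}N^{-s}$ with weight $(1+|n|)^{sq}\sim N^{sq}$; summing gives $\|\psi_0\|_{M^{2,q}_s}^q\sim A^d (RA^{-d/q}N^{-s})^q N^{sq}=R^q$. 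Equivalently, one may invoke Lemma~\ref{rl}\eqref{rcs}, which for $q\ge2$ gives $\mathcal{F}L^q_s\hookrightarrow M^{2,q}_s$, together with the already-recorded fact $\|\psi_0\|_{\mathcal{F}L^q_s}\sim R$.

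For \eqref{skb2}, I would write, using \eqref{fh} and Plancherel,
\[\|U_k[\psi_0](T)\|_{M^{2,q}_s}=\left\|(1+|n|)^s\,\|\sigma_n\widehat{U_k[\psi_0]}(T)\|_{L^2}\right\|_{\ell^q},\]
and then estimate $\|\sigma_n\widehat{U_k[\psi_0]}\|_{L^2}\le\sup_\xi|\widehat{U_k[\psi_0]}(T,\xi)|\cdot\|\sigma_n\|_{L^2(Q_n\cap\operatorname{supp}\widehat{U_k[\psi_0]})}$ using $|\sigma_n|\le1$. Since $s<0$, the sum of $(1+|n|)^{sq}$ over the $n$ whose box meets $\operatorname{supp}\widehat{U_k[\psi_0]}$ is, after the discrete rearrangement centering the support at the origin (as in Lemma~\ref{d1}) and the bound $|\operatorname{supp}\widehat{U_k[\psi_0]}|\le C^kA^d$ from Lemma~\ref{kms}, controlled by $C^k\|(1+|n|)^s\|_{\ell^q(0\le|n|\le A)}$ (the radius dilation by the factor $C^{k/d}$ costing at most $C^k$, exactly as in the continuous estimate of Lemma~\ref{dnl}, with the natural modification when $q=\infty$). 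This reduces \eqref{skb2} to a pointwise bound on $\sup_\xi|\widehat{U_k[\psi_0]}(T,\xi)|$.

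That last bound is obtained precisely as in Lemma~\ref{d1}: expanding $U_k$ via Definition~\ref{imd}, applying Young's and the Cauchy--Schwarz inequalities to the $(2\sigma+1)$-fold convolution, and inserting the $L^2$ bound of Lemma~\ref{nkm}\eqref{3} together with the support bound of Lemma~\ref{kms}. Here I take $M\sim RN^{-s}A^{d/2-d/q}\sim\|\psi_0\|_{M_A}$, so that $A^{d/2}M\sim\rho_2 T^{-1/(2\sigma)}$ with $\rho_2=RN^{-s}A^{d(1-1/q)}T^{1/(2\sigma)}$, and tracking the powers of $A$ as in Lemma~\ref{d1} with $p$ replaced by $q$ yields
\[\sup_\xi|\widehat{U_k[\psi_0]}(T,\xi)|\lesssim C^k\rho_2^{k-1}RN^{-s}A^{-d/q},\]
which combined with the previous paragraph gives \eqref{skb2}. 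The only genuinely delicate point is this power-counting: the $2\sigma-1$ support factors each contribute $A^{d/2}$ while the $2\sigma+1$ factors in $L^2$ each carry $A^{d/2-d/q}$, and the consistency check is that the integer part of the resulting exponent collapses to $(k-1)-k/q$, matching $A^{d(1-1/q)(k-1)}A^{-d/q}$. Since this is the verbatim analogue of Lemma~\ref{d1} under the interchange of $p$ and $q$, no new idea is required and the difficulty is purely bookkeeping.
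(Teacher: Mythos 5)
Your proposal is correct and follows essentially the same route as the paper: the paper proves \eqref{skb1} exactly via the embedding $\mathcal{F}L^q_s\hookrightarrow M^{2,q}_s$ from Lemma~\ref{rl}\eqref{rcs} together with $\|\psi_0\|_{\mathcal{F}L^q_s}\sim R$ (one of your two alternatives), and for \eqref{skb2} it simply states that the argument is the combination of the proofs of Lemma~\ref{dt1}, \eqref{dtp2} and Lemma~\ref{d1}, \eqref{de2}, which is precisely the frequency-uniform-decomposition reduction plus the Young/Cauchy--Schwarz power counting with $M\sim RN^{-s}A^{d/2-d/q}$ that you carry out. Your bookkeeping (exponent $d[(1-1/q)(k-1)-1/q]$, i.e. $\rho_2^{k-1}A^{-d/q}RN^{-s}$) checks out, so you have in effect supplied the details the paper omits.
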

\begin{proof} By Lemma \ref{rl}, we have 
\begin{equation*}
 \|U_1[\psi_0](T)\|_{M_s^{2,q}} \lesssim \|U_1[\psi_0](T)\|_{\mathcal{F}L_s^q}  \leq C R.
\end{equation*}
The proof of \eqref{skb2} is similar to Lemmas \ref{dt1}, \eqref{dtp2}
and \ref{d1}, \eqref{de2}, we omit the details.
\end{proof}
The next lemma is the analogue of Lemmas~\ref{d2} and \ref{dt2}, so we
leave out its proof.
\begin{lemma}\label{kb2}
  Let $s<0$, $2 \leq q \leq  \infty,$  $1\leq A \ll N$ and $\sum=\{ Ne_d, - Ne_d, 2Ne_d\}$ where $e_d= (0,\dots,0, 1) \in \R^d$. If $0<T\ll N^{-2},$ then we   have 
\[ \|U_{2\sigma +1} [\psi_0] (T)\|_{M^{2,q}_s}  \gtrsim  R A^{-\frac{d}{q}} N^{-s} \rho_2^{2\sigma} \left\| (1+|n|)^s  \right\|_{\ell^q\left( 0\leq |n| \leq  A \right)},\]
where  $\rho_2=  R N^{-s} A^{d- \frac{d}{q}} T^{\frac{1}{2\sigma}}$.
\end{lemma}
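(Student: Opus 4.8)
The plan is to follow the proof of Lemma~\ref{dt2} verbatim, the only change being that the normalization $A^{-d/2}$ of the datum is replaced by $A^{-d/q}$, as prescribed by \eqref{kidm} in the regime $2\le q\le\infty$. First I would invoke the frequency-uniform characterization of the $M^{2,q}_s$-norm recorded just after \eqref{fh}, together with Plancherel's theorem and the lower bound $|\sigma_n(\xi)|\ge c$ on $Q_n$ from \eqref{fh}, to write
\[
\|U_{2\sigma+1}[\psi_0](T)\|_{M^{2,q}_s}^q
\gtrsim \sum_{n\in\Z^d}(1+|n|)^{sq}
\left(\int_{Q_n}\bigl|\widehat{U_{2\sigma+1}[\psi_0]}(\xi,T)\bigr|^2\,d\xi\right)^{q/2},
\]
exactly as in the first display of the proof of Lemma~\ref{dt2}.

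Next I would transport the pointwise lower bound \eqref{lu} to the present normalization. Its derivation in Lemma~\ref{d2} never used the precise exponent in $A^{-d/p}$: it relied only on the non-emptiness of the resonant set $\mathcal C$, on the smallness $|\Phi|\lesssim N^2$ of the phase for frequencies in $(2\sigma+1)^{-1}Q_A$, and on the ensuing bound $\RE\int_0^T e^{it\Phi/2}\,dt\ge T/2$ valid when $0<T\ll N^{-2}$. Substituting $A^{-d/q}$ for $A^{-d/p}$ therefore gives
\[
\bigl|\widehat{U_{2\sigma+1}[\psi_0]}(T,\xi)\bigr|
\gtrsim \bigl(R A^{-d/q}N^{-s}\bigr)^{2\sigma+1}(A^d)^{2\sigma}\,T\,
\chi_{(2\sigma+1)^{-1}Q_A}(\xi).
\]
Feeding this into the sum above and keeping only the cubes $Q_n\subset(2\sigma+1)^{-1}Q_A$, that is those with $|n|\lesssim A$, for which the integral over $Q_n$ is bounded below by the square of the constant prefactor (since $|Q_n|=1$), I obtain
\[
\|U_{2\sigma+1}[\psi_0](T)\|_{M^{2,q}_s}
\gtrsim \bigl(R A^{-d/q}N^{-s}\bigr)^{2\sigma+1}(A^d)^{2\sigma}\,T
\left(\sum_{0\le|n|\le A}(1+|n|)^{sq}\right)^{1/q}.
\]

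It remains to identify the prefactor. A bookkeeping of the powers of $R$, $N$ and $A$ shows that $\bigl(R A^{-d/q}N^{-s}\bigr)^{2\sigma+1}(A^d)^{2\sigma}T$ equals $R A^{-d/q}N^{-s}\rho_2^{2\sigma}$ with $\rho_2=RN^{-s}A^{d-d/q}T^{1/(2\sigma)}$, after which recognizing the remaining sum as $\|(1+|n|)^s\|_{\ell^q(0\le|n|\le A)}$ finishes the proof. Since the argument is a transcription of Lemmas~\ref{d2} and \ref{dt2}, no genuine obstacle arises; the one point demanding care is the exponent arithmetic, which must produce the power $A^{d-d/q}$ inside $\rho_2$ --- reflecting that the datum is now normalized in $\mathcal{F}L^q_s$ rather than in $H^s$ --- in place of the $A^{d/2}$ occurring in the case $1\le q\le 2$.
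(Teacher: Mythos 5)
Your proposal is correct and is exactly the argument the paper intends: the authors omit the proof of Lemma~\ref{kb2}, stating it is the analogue of Lemmas~\ref{d2} and \ref{dt2}, and your transcription (replacing $A^{-d/2}$ by $A^{-d/q}$ in \eqref{lu} and redoing the exponent bookkeeping to produce $\rho_2$) is precisely that analogue. The power count $\bigl(RA^{-d/q}N^{-s}\bigr)^{2\sigma+1}(A^d)^{2\sigma}T = RA^{-d/q}N^{-s}\rho_2^{2\sigma}$ checks out.
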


\subsection{Proof of Theorem \ref{theo:inflation-modulation}}
\begin{lemma}
Let $s<0$.   In the limit $A\to \infty$, we have, for $1\le q<\infty$, 
  \begin{equation*}
    \left\| (1+|n|)^s  \right\|_{\ell^q\left( 0\leq |n| \leq  A
      \right)} \Eq A \infty g_s^q(A):= \begin{cases}  1 \quad \text{if} \  -sq>d,\\ \left( \log  A\right)^{1/q}  \quad \text{if} \ -sq=d,\\
 A^{1/q+s} \quad \text{if} \ -sq<d,
\end{cases}
\end{equation*}
and $\displaystyle \left\| (1+|n|)^s\right\|_{\ell^{\infty}\left( 0\leq |n| \leq  A
    \right)} \Eq A \infty  1$.
\end{lemma}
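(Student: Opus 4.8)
The plan is to estimate the $\ell^q$ sum $\sum_{0\le |n|\le A}(1+|n|)^{sq}$ (over $n\in\Z^d$) by comparing it to the corresponding integral $\int_{|\xi|\le A}(1+|\xi|)^{sq}\,d\xi$, exactly as in the continuous computation of Lemma~\ref{dnl}, with the roles of $A^{d/p+s}$ and $f_s^p$ played here by $A^{1/q+s}$ and $g_s^q$. First I would treat the case $1\le q<\infty$. Since $n\mapsto (1+|n|)^{sq}$ is a positive, radially decreasing function (as $s<0$), the standard integral comparison gives
\begin{equation*}
  \sum_{n\in\Z^d,\ |n|\le A}(1+|n|)^{sq} \approx \int_{|\xi|\le A}(1+|\xi|)^{sq}\,d\xi,
\end{equation*}
up to multiplicative constants depending only on $d$ and $s$, in the regime $A\to\infty$. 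The lower bound comes from comparing each summand with the integral over the unit cube $n+Q_1$ (using monotonicity of the radial profile), and the upper bound is obtained symmetrically; the only mild care needed is near the origin, where boundedly many terms contribute an $O(1)$ amount that is harmless in all three regimes.

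Next I would evaluate the integral by passing to polar coordinates, which is precisely the quantity $I(R_1)$ computed in the proof of Lemma~\ref{dnl} with $p$ replaced by $q$ and $R_1\sim A$. That computation splits according to the sign of $-sq-d$: the radial integral $\int_1^A r^{d-1}(1+r^2)^{sq/2}\,dr$ behaves like $\int_1^A r^{d-1+sq}\,dr$, which is bounded (convergent tail) when $-sq>d$, logarithmic when $-sq=d$, and of order $A^{d+sq}$ when $-sq<d$. Taking the $1/q$-th power turns these into $1$, $(\log A)^{1/q}$, and $A^{d/q+s}\cdot A^{(1-d)/q}$; here I must be careful that the exponent matches the claimed $A^{1/q+s}$. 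Indeed, raising $\int_{|\xi|\le A}(1+|\xi|)^{sq}\,d\xi\sim A^{d+sq}$ to the power $1/q$ yields $A^{d/q+s}$, so in fact the stated exponent should read $A^{d/q+s}$; I will follow the paper's normalization, noting that the three cases are exactly $g_s^q(A)$ once the constants are absorbed into the $\Eq A\infty$ notation. This routine but bookkeeping-heavy matching of exponents is the step I expect to demand the most attention.

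Finally, the case $q=\infty$ is immediate: the $\ell^\infty$ norm of $(1+|n|)^s$ over $0\le|n|\le A$ is attained at the smallest admissible $|n|$, namely $n=0$, giving value $(1+0)^s=1$ for every $A$, so the limit is trivially $1$. I would record this separately since the monotonicity argument degenerates to selecting the supremum rather than summing. Assembling the three regimes for $q<\infty$ together with the $q=\infty$ statement then yields the claimed asymptotics, and the proof concludes. The main obstacle is purely computational---correctly tracking the powers of $A$ through the integral comparison and the subsequent $1/q$-th root---rather than conceptual, since the integral-to-sum comparison for a monotone radial weight is entirely standard.
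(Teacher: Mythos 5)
Your proof is correct and takes essentially the same route as the paper's: the paper's own proof is exactly the integral-test comparison followed by the radial computation of Lemma~\ref{dnl}, written in condensed form. Your side remark about the exponent is also well taken --- the third case should indeed read $A^{d/q+s}$ rather than $A^{1/q+s}$ (a typo in the statement, harmless when $d=1$), and the paper itself later uses the bound $g_s^q(A)\gtrsim A^{d/q+s}$ consistent with your computation.
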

\begin{proof}
  Since $ (1+|\xi|)^{sq}$ is a decreasing function  in $|\xi|$, in view
  of the integral test and Lemma \ref{dnl}, we have, for  $1\leq q < \infty$,
  \begin{align*}
    \left\| (1+|n|)^s\right\|_{\ell^q\left( 0\leq |n|\leq A \right)}=
  \left( \sum_{0\leq |n|\leq  A} \frac{1}{(1+|n|)^{-sq}} \right)^{1/q}&
  \Eq A \infty \(\int_{|\xi|\le A} \frac{d\xi}{(1+|\xi|)^{-sq}}\)^{1/q} \\
    &
  \Eq A \infty \(\int_0^A \frac{r^{d-1}dr}{(1+r)^{-sq}}\)^{1/q} ,
  \end{align*}
  hence the result for $q$ finite.
The case $q=\infty$ is straightforward.
\end{proof}

To prove Theorem \ref{theo:inflation-modulation}, we distinguish
two cases.
\smallbreak

\noindent {\bf First case:  $1\leq q \leq 2.$}
By Corollary \ref{uc}, we have  the existence of solution  to \eqref{nls} in $M_{A}$  up to time  $T$ whenever  $\rho=RN^{-s} A^{d/2} T^{1/(2\sigma)} \ll 1.$  In view of  Lemma \ref{d1} and since $\rho<1,$  $\sum_{\ell =2}^{\infty} \| U_{2\sigma \ell +1}[\psi_0] (T)\|_{M^{2,q}_s} $ can be dominated by the  sum of a geometric series. Specifically, we have 
\begin{align}\label{mvm1}
\left\| \sum_{\ell =2}^{\infty} U_{2\sigma \ell +1}[\psi_0] (T) \right\|_{M^{2,q}_s} & \lesssim   A^{-d/2} R N^{-s} g_s^q(A) \sum_{\ell=2}^{\infty} \rho^{2\sigma \ell} \nonumber \\
& \lesssim A^{-d/2} R N^{-s} g_s^q(A)\rho^{4\sigma}.
\end{align}
By Corollary \ref{uc} and the triangle  inequality,  we  obtain 
\begin{align*}
\|\psi(T)\|_{M^{2,q}_s} & =  \left\| \sum_{\ell =0}^{\infty} U_{2\sigma \ell +1}[\psi_0] \right\|_{M^{2,q}_s} \\
& \geq  \| U_{2\sigma +1}[\psi_0](T)\|_{M^{2,q}_s} - \|U_1[\psi_0](T)\|_{M^{2, q}_s}- \left\| \sum_{\ell =2}^{\infty} U_{2\sigma \ell +1}[\psi_0](T) \right\|_{M^{2,q}_s}.
\end{align*}
 In order to ensure 
\[ \|\psi(T)\|_{M^{2,q}_s} \gtrsim  \| U_{2\sigma +1}[\psi_0](T)\|_{M^{2,q}_s},   \]
we rely on the conditions
\begin{align}\label{rcm1}
 &\| U_{2\sigma +1}[\psi_0](T)\|_{M^{2,q}_s} \gg
  \|U_1[\psi_0](T)\|_{M^{2,q}_s},\\
  \label{rcm2}
 &\| U_{2\sigma +1}[\psi_0](T)\|_{M^{2,q}_s}  \gg  \left\| \sum_{\ell =2}^{\infty} U_{2\sigma \ell +1}[\psi_0](T) \right\|_{M^{2,q}_s}.
\end{align}
In view  of Lemmas \ref{dt1} and \ref{dt2},   \eqref{rcm1} amount  to the condition
\begin{equation}\label{scm1}
R \rho^{2\sigma} A^{-\frac{d}{2}} N^{-s} g^q_s(A) \gg R
\end{equation}
In view of Lemmas~\ref{dt1} and \ref{dt2}, and \eqref{mvm1}, \eqref{rcm2} amount to the condition 
\begin{equation}\label{scm2}
T\ll N^{-2}, \quad \rho\ll 1, \quad R \rho^{2\sigma} A^{-\frac{d}{2}} N^{-s} g^q_s(A) \gg   R \rho^{4\sigma} A^{-d/2} N^{-s} g_s^q(A).
\end{equation}
We now choose $A, R$ and $T$ so that conditions \eqref{scm1} and \eqref{scm2} are satisfied. To this end,  we set 
\[ R= (\log N)^{-1}, \quad A \sim (\log N)^{-\frac{2\sigma +2}{|s|}} N, \quad  T= (A^{-d/2} N^{s})^{2\sigma}. \]
Then we have   $$\rho= R N^{-s} A^{d/2} T^{\frac{1}{2\sigma}}=(\log N)^{-1}\ll 1.$$ Note that
\[T=  (\log N)^{-\frac{2\sigma +2}{|s|} d (\frac{1}{2} -1) 2\sigma} N^{-d \sigma + 2\sigma s}.\]
Since $s< \frac{d}{2}- \frac{1}{\sigma}$ and $\log N = \mathcal{O}(N^{\epsilon})$ for any $\epsilon>0,$ we   have 
\[ T \ll N^{-2}. \]
We have  $g^{q}_s(A) \gtrsim  A^{\frac{d}{q}+s}$ for any $s<0 $ and   $A\geq 1.$  Thus, for $1\leq q \leq 2,$ we have 
\begin{align*}
R \rho^{2\sigma} A^{-\frac{d}{2}} N^{-s} g^q_s(A)  \gtrsim    (\log
  N)^{- (2\sigma +1)} (\log N)^{2\sigma +2}  A^{d \left( \frac{1}{q}-
  \frac{1}{2} \right)} &\gtrsim \log N \\
  &\gg  (\log N)^{-1}=R,
\end{align*}
 and \eqref{scm1} is satisfied. Thus,  we have $\|\psi(T)\|_{M^{2,q}_s} \gtrsim  \| U_{2\sigma +1}[\psi_0](T)\|_{M^{2,q}_s} \gtrsim  \log N.$
Since $\|\psi_0\|_{M^{2,q}_s} \lesssim R = (\log N)^{-1}$ and $T\ll
N^{-2},$ we get norm inflation by letting $N\to \infty.$ This
completes the proof for $q\in [1,2]$.
\smallbreak

\noindent {\bf Second case: $2\leq q \leq \infty$.}
By Corollary \ref{uc}, we have  the existence of solution  to \eqref{nls} in $M_{A}$  up to time  $T$ whenever  $\rho_2=RN^{-s} A^{d \left( 1- \frac{1}{q}\right)} T^{1/(2\sigma)} \ll 1.$ 
By Lemmas~\ref{kb1} and \ref{kb2},  the conditions
\begin{equation}\label{fdp}
 T\ll N^{-2}, \quad  \rho_2\ll 1 \quad \text{and} \quad  R \rho_2^{2\sigma} A^{-\frac{d}{q}} N^{-s} g^q_s(A) \gg R
\end{equation}
ensures that  
\[ \|\psi(T)\|_{M^{2,q}_s} \gtrsim  \| U_{2\sigma +1}[\psi_0](T)\|_{M^{2,q}_s} \sim  R \rho_1^{2\sigma} A^{-\frac{d}{q}} N^{-s} g^q_s(A).\]
We now choose $A, R$ and $T$ so that conditions  \eqref{fdp} satisfied. To this end,  we set 
\[ R= (\log N)^{-1}, \quad A \sim (\log N)^{-\frac{2\sigma +2}{|s|}} N, \quad  T= (A^{d(\frac{1}{q}-1)} N^{s})^{2\sigma}. \]
Then we have   $$\rho_2= R N^{-s} A^{d- \frac{d}{q}} T^{\frac{1}{2\sigma}}=(\log N)^{-1}\ll 1.$$
Note that
\[T=  (\log N)^{-\frac{2\sigma +2}{|s|} d (\frac{1}{q} -1) 2\sigma} N^{d (\frac{1}{q} -1) 2\sigma + 2\sigma s}.\]
Since $s< d \left( 1-\frac{1}{q} \right)- \frac{1}{\sigma}$ and $\log N = \mathcal{O}(N^{\epsilon})$ for any $\epsilon>0,$ we   have 
\[ T \ll N^{-2}. \]
Note that   $g^{q}_s(A) \gtrsim  A^{\frac{d}{q}+s}$ for any $s<0$ and $A\geq 1$ and so 
\[ R \rho_2^{2\sigma} A^{-\frac{d}{q}} N^{-s} g^q_s(A)  \gtrsim \log N \gg  (\log N)^{-1}=R.\]
 Thus,  we have $\|\psi(T)\|_{M^{2,q}_s} \gtrsim  \| U_{2\sigma +1}[\psi_0](T)\|_{M^{2,q}_s} \gtrsim  \log N.$
Since $\|\psi_0\|_{M^{2,q}_s} \lesssim   R = (\log N)^{-1}$ and $T\ll
N^{-2},$ we get norm inflation by letting $N\to \infty.$ This
completes the proof of Theorem~\ref{theo:inflation-modulation}.

\section{Norm inflation as a by-product of geometric optics}
\label{sec:reduc}
The proof of Theorems~\ref{theo:fourier-lebesgue} and
\ref{theo:modulation} follows the same strategy as in
\cite{carles2012geometric}: through a suitable rescaling, we turn the
ill-posedness result into an asymptotic result, which can be expressed
in the framework of weakly nonlinear geometric optics. More precisely,
we change the unknown function $\psi$ to $u$, via
\begin{equation}
  \label{eq:upsi}
  u^{\eps} (t,x) = \eps^{\frac{2-J}{2\sigma}} \psi (\eps t,  x),
\end{equation}
 where the parameter $\eps$ will tend to zero.
 For   $\psi$ solution to \eqref{nls}, $u^{\eps}$ solves
  \begin{equation}\label{eq:u}
i\eps \partial_t u^{\eps} + \frac{\eps^2}{2}\Delta u^{\eps} = \mu
    \eps^{J}|u^{\eps}|^{2s}u^{\eps}.
\end{equation}
The case $J=1$ corresponds to \emph{weakly nonlinear geometric
  optics} (WNLGO), as defined in \cite{CaBook}. As noticed in
\cite{carles2012geometric} in the framework of Sobolev spaces, a
phenomenon of infinite loss of regularity can be proved via this WNLGO 
setting, under the assumption $s<-1/(2\sigma)$ in the analogue of
Theorems~\ref{theo:fourier-lebesgue} and 
\ref{theo:modulation}. Like in that paper, in order to weaken the
assumption on $s$ to 
$s<-1/(2\sigma+1)$, we will have to consider some value $J>1$, and
perform some ``asymptotic sin'', in the sense that we change the
hierarchy in an asymptotic expansion involving the limit $\eps\to 0$.
\smallbreak

The heuristic idea is the same as in \cite{CCTper}: when negative
regularity is involved, the zero Fourier mode plays a stronger role
than (large) non-zero modes, which come with a small factor. With the
scaling \eqref{eq:upsi} in mind, our goal is to show that we may
consider initial data (for $u$) of the form
\begin{equation*}
  u(0,x) = \sum_{j\not =0}e^{ij\cdot x/\eps}\alpha_j(x),
\end{equation*}
that is containing only rapidly oscillatory terms, and such that the
evolution under \eqref{eq:u} creates a non-trivial non-oscillatory
term.

To be more specific, recall the strategy of multiphase nonlinear geometric
optics (see \cite{carles2010multiphase} for more details): we plug an
\emph{ansatz} of the form 
\begin{equation*}
  u(t,x) = \sum_j e^{i\phi_j(t,x)/\eps} a_j(t,x)
\end{equation*}
into \eqref{eq:u}, and order the powers of $\eps$. The most singular
term is of order $\eps^0$, it is the eikonal equation:
\begin{equation*}
  \d_t \phi_j +\frac{1}{2}|\nabla \phi_j|^2=0.
\end{equation*}
In the case of an initial phase $\phi_j(0,x) = j\cdot x$, no caustic
appears, and the global solution is given by
\begin{equation}\label{eq:phi}
  \phi_j(t,x) = j\cdot x-\frac{|j|^2}{2}t.
\end{equation}
In the sequel, we consider such phases, for $j\in \Z^d$. The next term
in the hierarchy is of order $\eps^1$, but as evoked above, we
``cheat'', and incorporate some nonlinear effects even if $J>1$ (and $J<2$),
\begin{equation}\label{eq:profiles}
  \d_t a_j +j\cdot \nabla_x a_j = -i\mu
  \eps^{J-1}\sum_{\phi_{k_1}-\phi_{k_2}+\dots +\phi_{k_{2\sigma+1}} =
    \phi_j}a_{k_1}\bar a_{k_2}\dots a_{k_{2\sigma+1}}, 
\end{equation}
where we have used $\nabla \phi_j(t,x)=j$. 
Again in view of the specific form of the phase \eqref{eq:phi}, the
condition on the sum involves a \emph{resonant condition},
$(k_1,k_2,\dots,k_{2\sigma+1})\in {\mathcal R}_j$, where
\begin{align*}
 { \mathcal R}_j &= \left\{ (k_\ell)_{1\le \ell\le 2\sigma+1},\
                   \sum_{\ell=1}^{2\sigma+1} (-1)^{\ell+1} k_\ell=j,\
                   \sum_{\ell=1}^{2\sigma+1} (-1)^{\ell+1} |k_\ell|^2=|j|^2\right\}. 
\end{align*}
In the cubic case $\sigma=1$, those sets are described exactly:
\begin{lemma}[See \cite{Iturbulent,carles2010multiphase}]
  Suppose $\sigma=1$. \\
  $\bullet$ If $d=1$, then ${\mathcal R}_j=\{ (j,\ell,\ell),\ (\ell,\ell,j)\ ;\
  \ell\in \Z\}$.\\
  $\bullet$ If $d\ge 2$, then $(k_1,k_2,k_3)\in {\mathcal R}_j$ precisely when
  the endpoints of the vectors $k_1,k_2,k_3,j$ for four corners of a
  non-degenerate rectangle with $k_2$ and $j$ opposing each other, or
  when this quadruplet corresponds to one of the following two
  degenerate cases: $(k_1=j,k_2=k_3)$ or $(k_1=k_2, k_3=j)$. 
\end{lemma}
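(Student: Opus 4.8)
The plan is to collapse the two defining constraints of $\mathcal{R}_j$ into a single orthogonality relation, and then simply read off the geometry. First I would keep the linear (frequency) constraint $j = k_1 - k_2 + k_3$ and substitute it into the quadratic (resonance) constraint $|k_1|^2 - |k_2|^2 + |k_3|^2 = |j|^2$. Expanding
\[
  |k_1 - k_2 + k_3|^2 = |k_1|^2 + |k_2|^2 + |k_3|^2 - 2k_1\cdot k_2 + 2k_1\cdot k_3 - 2k_2\cdot k_3
\]
and cancelling against the left-hand side, all the squared lengths drop out and the quadratic equation reduces to
\[
  (k_1 - k_2)\cdot (k_2 - k_3) = 0.
\]
This identity is the entire content of the lemma: given the linear constraint, membership in $\mathcal{R}_j$ is \emph{equivalent} to this orthogonality, so both directions of the ``precisely when'' are handled at once.

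Next I would introduce the edge vectors $a = k_1 - k_2$ and $b = k_3 - k_2$. The linear constraint becomes $j = k_2 + a + b$, so the four frequencies are exactly $k_2$, $k_2 + a$, $k_2 + b$ and $k_2 + a + b$, while the orthogonality reads $a\cdot b = 0$. These are precisely the four corners of a rectangle with orthogonal edges $a,b$ issuing from $k_2$: the corner $k_2$ and the corner $j = k_2 + a + b$ are joined by the diagonal, hence opposite to one another, while $k_1 = k_2 + a$ and $k_3 = k_2 + b$ form the other diagonal. Non-degeneracy of the rectangle corresponds exactly to $a\neq 0$ and $b\neq 0$, which matches the stated description for $d\ge 2$.

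It then remains to examine the boundary cases $a = 0$ and $b = 0$, which give the two degenerate configurations. If $a = 0$ then $k_1 = k_2$ and $j = k_2 + b = k_3$, i.e.\ $(k_1 = k_2,\ k_3 = j)$; if $b = 0$ then $k_3 = k_2$ and $j = k_2 + a = k_1$, i.e.\ $(k_1 = j,\ k_2 = k_3)$, as claimed. Finally, for $d = 1$ the vectors $a,b$ are scalars, so $a\cdot b = 0$ forces $a = 0$ or $b = 0$; writing $\ell = k_2$, these two alternatives produce exactly $(\ell,\ell,j)$ and $(j,\ell,\ell)$ with $\ell\in\Z$, recovering the one-dimensional statement. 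There is no genuine obstacle here: the proof is driven entirely by the algebraic simplification to $(k_1-k_2)\cdot(k_2-k_3)=0$, and the only point demanding care is bookkeeping of which corner is opposite which, so as to confirm that $a=0$ versus $b=0$ correspond to the two degenerate cases in the precise form stated.
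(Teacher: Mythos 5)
Your proof is correct, and it is essentially the standard argument behind this lemma: the paper itself gives no proof (it defers to the cited references), and those references establish the result exactly as you do, by substituting the linear constraint $j=k_1-k_2+k_3$ into the quadratic one to obtain the orthogonality $(k_1-k_2)\cdot(k_3-k_2)=0$ and then reading off the rectangle/degenerate/one-dimensional cases. The only point worth making explicit for the converse direction is that for a rectangle with $k_2$ and $j$ at opposite corners the parallelogram identity $k_2+j=k_1+k_3$ already yields the linear constraint, so the parametrization by $k_2$, $a$, $b$ is genuinely a bijection and the ``precisely when'' holds in both directions, as you indicate.
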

The above lemma explains why our approach distinguishes the
one-dimensional case and the multi-dimensional case, and in particular
why the cubic one-dimensional case is left out in
Theorems~\ref{theo:fourier-lebesgue} and 
\ref{theo:modulation}. 

\subsection{Multi-dimensional case}

The leading idea in \cite{carles2012geometric} is to start from three
non-trivial modes only, in the case $d\ge 2$, and create at least one
new mode (possibly more if $\sigma\ge 2$), corresponding to $j=0$.
\begin{lemma}\label{lem:creation-zero-multiD}
  Let $d\ge 2$ and $\sigma\in \N^*$. Define $k_1,k_2,k_3\in \Z^d$ as
  \begin{equation*}
    k_1=(1,0,\dots,0), \quad k_2=(1,1,0,\dots,0),\quad
    k_3=(0,1,0,\dots,0). 
  \end{equation*}
  For initial data of the form
  \begin{equation*}
    u(0,x) = \alpha(x)\sum_{j=1}^3 e^{ik_j\cdot x/\eps},\quad \alpha
    \in \Sch(\R^d)\setminus\{0\},
  \end{equation*}
  we have $a_{0\mid t=0}=0$ and $\d_t a_{0\mid
    t=0}=\eps^{J-1}c_0\alpha(x)$, with $c_0=\sharp
  \mathcal R_0\ge 1$. 
\end{lemma}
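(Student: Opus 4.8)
The plan is to read off both assertions directly from the profile equations \eqref{eq:profiles}, the only genuine work being the combinatorial claim $c_0\ge 1$. First I would record that the chosen \emph{ansatz} populates exactly the three modes $k_1,k_2,k_3$ at the initial time: $a_{k_1\mid t=0}=a_{k_2\mid t=0}=a_{k_3\mid t=0}=\alpha$, while $a_{m\mid t=0}=0$ for every $m\in\Z^d\setminus\{k_1,k_2,k_3\}$. Since $k_1,k_2,k_3$ are all nonzero, $0$ is not among them, whence $a_{0\mid t=0}=0$, which is the first assertion.

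For the second assertion I would specialize \eqref{eq:profiles} to $j=0$. The transport operator degenerates ($j\cdot\nabla_x=0$ when $j=0$), so at $t=0$
\begin{equation*}
  \d_t a_{0\mid t=0} = -i\mu\,\eps^{J-1}\sum_{(m_1,\dots,m_{2\sigma+1})\in\mathcal R_0} a_{m_1\mid t=0}\,\overline{a_{m_2\mid t=0}}\cdots a_{m_{2\sigma+1}\mid t=0}.
\end{equation*}
Because only $k_1,k_2,k_3$ carry a nonzero amplitude at $t=0$, the sum collapses to those resonant tuples all of whose entries lie in $\{k_1,k_2,k_3\}$. Every surviving term is a product of $\sigma+1$ factors $\alpha$ and $\sigma$ conjugated factors $\overline\alpha$, hence equals $|\alpha|^{2\sigma}\alpha$; summing over the $c_0:=\sharp\mathcal R_0$ admissible tuples gives $\d_t a_{0\mid t=0}=-i\mu\,\eps^{J-1}c_0\,|\alpha|^{2\sigma}\alpha$, which is the announced form up to the harmless nonzero constant $-i\mu$ and the cubic-type amplitude.

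The only real point is therefore $c_0\ge1$, i.e. that $\mathcal R_0$ contains at least one tuple with entries in $\{k_1,k_2,k_3\}$. Here I would exploit the arithmetic built into the choice of $k_1,k_2,k_3$: one checks $k_1-k_2+k_3=0$ together with $|k_1|^2-|k_2|^2+|k_3|^2=1-2+1=0$, which is exactly the statement that $0,k_1,k_2,k_3$ are the corners of a non-degenerate rectangle (with $k_2$ opposite the origin), the $d\ge2$ case of the cubic resonance lemma. This settles $\sigma=1$. For $\sigma\ge2$ I would extend this cubic resonance to a $(2\sigma+1)$-tuple by inserting $\sigma-1$ ``neutral'' pairs: placing the same vector (say $k_1$) in one even slot and one odd slot contributes $-k_1+k_1=0$ to the momentum alternating sum and $-|k_1|^2+|k_1|^2=0$ to the frequency alternating sum, so the augmented tuple still belongs to $\mathcal R_0$. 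This produces an explicit element, giving $c_0\ge1$.

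I expect the combinatorial step to be the only delicate one: one must ensure the padding respects the alternating sign pattern $(-1)^{\ell+1}$ in the definition of $\mathcal R_0$, so that the inserted pair genuinely cancels in \emph{both} the momentum and the frequency sums. If one wanted the exact value of $c_0$ rather than just $c_0\ge1$, one would have to enumerate all resonant tuples drawn from $\{k_1,k_2,k_3\}$, which becomes intricate for large $\sigma$; but for the lemma only non-emptiness is needed, and that is supplied by the rectangle resonance above.
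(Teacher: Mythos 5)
Your proposal is correct and follows essentially the same route as the paper, which simply observes that the lemma is immediate from \eqref{eq:profiles} once one checks $(k_1,k_2,k_3)\in\mathcal R_0$ (via $k_1-k_2+k_3=0$ and $|k_1|^2-|k_2|^2+|k_3|^2=1-2+1=0$) and, for $\sigma\ge 2$, pads to $(k_1,k_2,k_3,k_1,\dots,k_1)\in\mathcal R_0$ exactly as in your ``neutral pair'' argument. Your remark that the right-hand side is really $-i\mu\,\eps^{J-1}c_0|\alpha|^{2\sigma}\alpha$ rather than literally $\eps^{J-1}c_0\alpha$ is a fair reading of the statement's slight imprecision, and is harmless for the way the lemma is used later.
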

This lemma is straightforward, in view of \eqref{eq:profiles}, and
since $(k_1,k_2,k_3)\in \mathcal R_0$ if $\sigma=1$,
$(k_1,k_2,k_3,k_1,k_1,\dots,k_1)\in \mathcal R_0$ if $\sigma\ge 2$.

\subsection{One-dimensional case}

In the one-dimensional case, we have a similar result, provided that
the nonlinearity is at least quintic, in view of
\cite[Lemma~4.2]{carles2017norm} (and Example~4.3 there):
\begin{lemma}\label{lem:creation-zero-1Dquintic}
  Let $d=1$ and $\sigma\ge 2$. Define $k_1,k_2,k_3,k_4,k_5\in \Z$ as
  \begin{equation*}
     (k_1,k_2,k_3,k_4,k_5)= (2,-1,-2,4,3).
   \end{equation*}
   For initial data of the form
  \begin{equation*}
    u(0,x) = \alpha(x)\sum_{j=1}^5 e^{ik_j x/\eps},\quad \alpha
    \in \Sch(\R)\setminus\{0\},
  \end{equation*}
  we have $a_{0\mid t=0}=0$ and $\d_t a_{0\mid
    t=0}=\eps^{J-1}c_0\alpha(x)$, with $c_0=\sharp
  \mathcal R_0\ge 1$. 
\end{lemma}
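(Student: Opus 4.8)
The plan is to read everything off the profile equation \eqref{eq:profiles} specialized to $j=0$ and evaluated at $t=0$, after first recording which amplitudes are nonzero initially. Since the initial datum excites only the five modes $k_1,\dots,k_5$, and $0\notin\{k_1,\dots,k_5\}$, the zero mode has vanishing initial amplitude, $a_{0\mid t=0}=0$; more generally $a_{k\mid t=0}=\alpha$ when $k\in\{k_1,\dots,k_5\}$ and $a_{k\mid t=0}=0$ otherwise. This immediately gives the first assertion.

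Next I would set $j=0$ in \eqref{eq:profiles}. As $\nabla_x\phi_0\equiv 0$, the transport term disappears, and at $t=0$ the only tuples $(\kappa_1,\dots,\kappa_{2\sigma+1})\in\mathcal R_0$ that contribute to the sum are those whose entries all lie in $\{k_1,\dots,k_5\}$, since any other tuple carries at least one vanishing amplitude. On each surviving tuple every factor equals $\alpha$, conjugated in the even slots, so the product is the single fixed profile $\alpha^{\sigma+1}\bar\alpha^{\sigma}=|\alpha|^{2\sigma}\alpha$, and the right-hand side collapses to $-i\mu\eps^{J-1}c_0\,|\alpha|^{2\sigma}\alpha$, where $c_0$ is the number of admissible resonant tuples. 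This is precisely the asserted structure $\d_t a_{0\mid t=0}=\eps^{J-1}c_0\alpha$, the scalar prefactor $-i\mu$ and the nonlinear monomial in $\alpha$ being absorbed into the notation.

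The only genuine content is then to certify $c_0\ge 1$ by exhibiting one resonance for $j=0$ built from nonzero modes. Here I would simply check the quintuple $(k_1,\dots,k_5)=(2,-1,-2,4,3)$ against the two defining constraints of $\mathcal R_0$: the momentum condition $k_1-k_2+k_3-k_4+k_5=2+1-2-4+3=0$, and the energy condition $k_1^2-k_2^2+k_3^2-k_4^2+k_5^2=4-1+4-16+9=0$. Both vanish, so this tuple belongs to $\mathcal R_0$ and $c_0\ge 1$. For $\sigma>2$ one reaches the required length $2\sigma+1$ by appending $\sigma-2$ cancelling pairs $(\kappa,\kappa)$ with $\kappa\in\{k_1,\dots,k_5\}$, placed in consecutive slots carrying opposite signs (positions $(6,7),(8,9),\dots$, whose signs in the alternating pattern are $(-,+)$): each such pair contributes $-\kappa+\kappa=0$ and $-\kappa^2+\kappa^2=0$, leaving the tuple in $\mathcal R_0$ while using only the five allotted modes. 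Equivalently, one may invoke \cite[Lemma~4.2 and Example~4.3]{carles2017norm} directly.

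I expect the main obstacle to be conceptual rather than computational, namely the very existence of such a resonance. In one space dimension the simultaneous linear and quadratic resonance conditions are extremely rigid, and for the cubic nonlinearity the preceding lemma shows that $\mathcal R_0$ contains only the degenerate tuples $(0,\ell,\ell)$ and $(\ell,\ell,0)$, each of which forces the mode $0$ to be present in the initial data; hence no zero mode can be created from purely nonzero modes. This is exactly why the statement is confined to $\sigma\ge 2$, and why producing the explicit quintuple above is the crux of the argument.
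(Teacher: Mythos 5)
Your proposal is correct and follows essentially the same route as the paper, which simply reads the claim off the profile equations \eqref{eq:profiles} and outsources the existence of the resonance to \cite[Lemma~4.2 and Example~4.3]{carles2017norm} — the quintuple $(2,-1,-2,4,3)$ you verify is exactly the one from that reference, and your arithmetic for both the momentum and energy conditions checks out, as does the padding by cancelling pairs for $\sigma>2$ and your reading of $c_0$ as the count of resonant tuples supported on the excited modes (with the prefactor $-i\mu|\alpha|^{2\sigma}$ absorbed into the paper's loose notation). No gaps.
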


\subsection{How to conclude}

Supposing that we can prove that the geometric expansion recalled
above provides an approximation $u_{\rm app}$ for the solution $u$ to
\eqref{eq:u}, suitable in the sense that the error is measured in a
sufficiently strong norm, the idea is that both $u_{\mid t=0}$ and
$u-u_{\rm app}$ are small in spaces involving negative regularity in $x$,
while Lemma~\ref{lem:creation-zero-multiD} or
\ref{lem:creation-zero-1Dquintic} implies that $u_{\rm app}$ is large
in many spaces.

\section{(Very) weakly nonlinear geometric optics}
\label{sec:wnlgo}

\subsection{A convenient functional framework}

Throughout this section, we denote by $X$ a Banach algebra in the space
variable, that is
\begin{equation}\label{eq:Banach}
\exists C>0,\quad  \|fg\|_X\le C\|f\|_X\|g\|_X,\quad \forall f,g\in X.
\end{equation}
We suppose that $X$ is translation invariant and, denoting by $\tau_kf(x) = f(x-k)$,
  \begin{equation}\label{eq:translation-inv}
    \|\tau_k f \|_X = \|f\|_{X},\quad \forall k\in \R^d,\ \forall f\in X. 
  \end{equation}
We assume in addition that the multiplication by plane wave
oscillations leaves the $X$-norm invariant,
\begin{equation}\label{eq:plane}
  \|f e_k\|_X= \|f\|_X,\quad \forall k\in \R^d,\text{ where } e_k(x)=
  e^{ik\cdot x}. 
\end{equation}
Note that this assumption rules out Sobolev spaces $H^s(\R^d)$, unless
$s=0$. 
Finally, we assume that the Schr\"odinger group acts on $X$, at least
locally in time:
\begin{assumption}\label{hyp:actionX}
  There exists $T_0$ such that $e^{i\frac{t}{2}\Delta}$ maps $X$ to $X$
  for $t\in [0,T_0]$, and
  \begin{equation*}
    \exists C>0,\quad \|e^{i\frac{t}{2}\Delta}\|_{\mathcal L(X,X)}\le C,\quad
    \forall t\in [0,T_0].
  \end{equation*}
\end{assumption}
In \cite{carles2010multiphase,GMS10}, the case $X=\F L^1(M)$ was considered,
with $M=\T^d$ (a choice resumed in \cite{carles2017norm}) or $\R^d$. In
\cite{carles2012geometric,Mo13}, the choice $X= \F L^1\cap L^2(\R^d)$
was motivated 
by the presence of more singular nonlocal nonlinearities. We shall consider later two
sorts of $X$ spaces:  $\F L^1\cap \F L^p$ or $\F  L^1\cap M^{1,1}$. 
\smallbreak

We denote by
\begin{equation*}
  Y=\left\{ (a_j)_{j\in \Z^d},\ \sum_{j\in \Z^d} \|a_j\|_X<\infty\right\}=\ell^1(X),
\end{equation*}
and
\begin{equation*}
  Y_2=\left\{ (a_j)_{j\in \Z^d}\in Y,\ \sum_{j\in \Z^d}
   \( \<j\>^2\|a_j\|_X+\<j\>\|\nabla a_j\|_X+\|\Delta a_j\|_X\)<\infty\right\}.
\end{equation*}
We suppose that $u$ solves \eqref{eq:u} with initial data
\begin{equation*}
  u(0,x) = \sum_{j\in \Z^d}\alpha_j(x) e^{ij\cdot x/\eps}.
\end{equation*}
\subsection{Construction of the approximate solution}

The approximate solution is given by
\begin{equation*}
  u_{\rm app}(t,x) =\sum_{j\in \Z^d} a_j(t,x) e^{i\phi_j(t,x)/\eps},
\end{equation*}
where $\phi_j$ is given by \eqref{eq:phi} and the $a_j$'s solve
\eqref{eq:profiles}, with initial data $\alpha_j$. 
\begin{lemma}\label{lem:exist-profiles}
  Let $d\ge 1$, $\sigma\in \N^*$ and $J\ge 1$. \\
  $\bullet$ If $(\alpha_j)_{j\in \Z^d}\in Y$, then there exists $T>0$
  independent of $\eps\in [0,1]$ and a unique solution $(a_j)_{j\in
    \Z^d}\in C([0,T];Y)$ to the system \eqref{eq:profiles}, such that
  $a_{j\mid t=0}=\alpha_j$ for all $j\in \Z^d$. \\
  $\bullet$ If in addition $(\alpha_j)_{j\in \Z^d}\in Y_2$, then $(a_j)_{j\in
    \Z^d}\in C([0,T];Y_2)$.
\end{lemma}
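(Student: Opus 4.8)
The plan is to solve the profile system \eqref{eq:profiles} by a Picard fixed-point argument, exploiting that its linear part is a pure transport operator. Since $\d_t+j\cdot\nabla_x$ is solved by the translation $\tau_{jt}$, Duhamel's formula recasts \eqref{eq:profiles} as
\begin{equation*}
  a_j(t)=\tau_{jt}\alpha_j-i\mu\eps^{J-1}\int_0^t\tau_{j(t-s)}\mathcal N_j[(a)](s)\,ds,\quad
  \mathcal N_j[(a)]:=\sum_{(k_\ell)\in\mathcal R_j}a_{k_1}\bar a_{k_2}\cdots a_{k_{2\sigma+1}}.
\end{equation*}
By \eqref{eq:translation-inv} each $\tau_{jt}$ is an isometry of $X$, so the diagonal free flow $(a_j)\mapsto(\tau_{jt}a_j)$ is an isometry of $Y=\ell^1(X)$; in particular Assumption~\ref{hyp:actionX} plays no role here, only the algebra and translation structure of $X$ is used. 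Since $J\ge 1$ and $\eps\in[0,1]$ we have $\eps^{J-1}\le 1$, and this is precisely what will make the existence time independent of $\eps$.

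The heart of the matter is the multilinear estimate $\|\mathcal N[(a)]\|_Y\le C\|(a)\|_Y^{2\sigma+1}$. Using the Banach algebra property \eqref{eq:Banach} together with the invariance of the $X$-norm under complex conjugation (valid for the concrete spaces $\F L^1\cap\F L^p$ and $\F L^1\cap M^{1,1}$ we have in mind), one gets
\begin{equation*}
  \|\mathcal N[(a)]\|_Y\le C^{2\sigma}\sum_{j\in\Z^d}\sum_{(k_\ell)\in\mathcal R_j}\prod_{\ell=1}^{2\sigma+1}\|a_{k_\ell}\|_X.
\end{equation*}
The key observation is that the linear resonance constraint $\sum_\ell(-1)^{\ell+1}k_\ell=j$ determines $j$ from $(k_1,\dots,k_{2\sigma+1})$; hence summing over $j$ and then over $\mathcal R_j$ amounts to summing over all tuples in $(\Z^d)^{2\sigma+1}$ subject only to the quadratic resonance condition. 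Dropping the latter and summing freely factorizes the sum into $\big(\sum_k\|a_k\|_X\big)^{2\sigma+1}=\|(a)\|_Y^{2\sigma+1}$. Thus $\mathcal N$ is a polynomial, locally Lipschitz map on $Y$, and a standard contraction on a ball of $C([0,T];Y)$ produces a unique solution on $[0,T]$ with $T$ depending only on $\|(\alpha)\|_Y$ and the algebra constant, hence uniform in $\eps$. Continuity in time of $t\mapsto(\tau_{jt}\alpha_j)_j$ in $Y$ follows from strong continuity of translation on $X$ and dominated convergence over $j$, the bound $2\|\alpha_j\|_X$ being summable.

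For the $Y_2$ statement I would propagate the higher regularity along $[0,T]$. The transport operator commutes with $\nabla_x$ and $\Delta_x$, so differentiating \eqref{eq:profiles} and using the Leibniz rule writes the equations for $\nabla a_j$ and $\Delta a_j$ through the same resonant sums with one or two factors differentiated; distributing the weight via the submultiplicativity $\langle j\rangle^2\le\langle k\rangle^2\langle k'\rangle^2$ on $\Z^d$ reduces everything to unconstrained products as above. The delicate terms are those where two derivatives fall on two \emph{distinct} frequency factors; these are controlled by an interpolation inequality of Gagliardo--Nirenberg type $\|\nabla f\|_X^2\lesssim\|f\|_X\|\Delta f\|_X$ available in the concrete $X$, which yields the tame estimate $\|\mathcal N[(a)]\|_{Y_2}\lesssim\|(a)\|_{Y_2}\,\|(a)\|_Y^{2\sigma}$. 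Since the free flow is also isometric on $Y_2$ and $\|(a)(t)\|_Y$ is already bounded on $[0,T]$ by the first part, Gronwall's lemma then propagates the $Y_2$ bound throughout $[0,T]$, and uniqueness in $Y$ identifies this with the solution already constructed.

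The genuinely mathematical content is the combinatorial factorization of the resonant sum, which forces the $\ell^1$-in-$j$ structure of $Y$; the step I expect to require the most care is the last tame estimate in $Y_2$, specifically the control of the two-factor derivative terms, which must be checked against the interpolation structure of the particular spaces $X$.
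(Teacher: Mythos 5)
Your treatment of the first bullet is essentially the paper's own: the paper likewise rewrites \eqref{eq:profiles} by Duhamel along the transport flow, $a_j(t,x)=\alpha_j(x-jt)-i\mu\eps^{J-1}\sum_{\mathcal R_j}\int_0^t(\cdots)(s,x-j(t-s))\,ds$, and dismisses the contraction as an easy consequence of \eqref{eq:translation-inv} and \eqref{eq:Banach}; your explicit remark that the linear constraint in $\mathcal R_j$ determines $j$, so that dropping the quadratic constraint factorizes the resonant sum into $\|(a)\|_Y^{2\sigma+1}$, is precisely the suppressed step, and $\eps^{J-1}\le 1$ is indeed why $T$ is uniform in $\eps$. (Two small caveats: you also need $\|\bar f\|_X=\|f\|_X$, which is not among the paper's axioms but holds for the concrete $X$'s; and strong continuity of translations actually \emph{fails} on $\F L^\infty$, so continuity in $t$ of the free part is not automatic for arbitrary $(\alpha_j)\in Y$ --- harmless here since the Duhamel term is continuous regardless and the data used later are Schwartz.) The real divergence is the second bullet, where the paper gives no argument at all but cites \cite[Proposition~5.12]{carles2010multiphase}, asserting that the proof there uses only the standing properties \eqref{eq:Banach}--\eqref{eq:plane} of $X$. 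Your route --- Leibniz, $\<j\>\le C\prod_\ell\<k_\ell\>$ along the linear resonance relation, a tame estimate, then Gronwall on the full interval $[0,T]$ --- is a legitimate self-contained substitute, and your instinct that the two-derivative cross terms are the delicate point is correct: the crude bound $\|\nabla a_k\|_X\|\nabla a_{k'}\|_X\le(\<k\>\|\nabla a_k\|_X)(\<k'\>\|\nabla a_{k'}\|_X)$ gives a term quadratic in the $Y_2$-norm, which does not close by linear Gronwall, so the interpolation $\|\nabla f\|_X^2\lesssim\|f\|_X\|\Delta f\|_X$ is genuinely doing work. The one thing you must actually check is that this interpolation holds: it is \emph{not} among the paper's abstract hypotheses on $X$, so your proof of the second bullet uses more structure than the framework advertises, but it is true in both concrete choices --- for $\F L^p$ by the pointwise/integral Cauchy--Schwarz bound $\int|\xi||\hat f|\le(\int|\hat f|)^{1/2}(\int|\xi|^2|\hat f|)^{1/2}$, and for $M^{1,1}$ via the uniform-decomposition norm $\sum_n\|\square_n f\|_{L^1}$ --- so the argument goes through where it is needed.
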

\begin{proof}
  In view of Duhamel's formula for \eqref{eq:profiles},
  \begin{equation*}
  a_j(t,x) = a_j(0,x-jt) -i\lambda\sum_{(k_1,k_2,\dots,k_{2\sigma+1})\in {\mathcal R}_j} \int_0^t  \(a_{k_1}\bar a_{k_2}\dots a_{k_{2\sigma+1}}\)(s,x - j(t-s))ds,
  \end{equation*}
the first point of the lemma is straightforward, as an easy
consequence of \eqref{eq:translation-inv} and \eqref{eq:Banach},  and a fixed point argument. The second point requires a
little bit more care: it was proven in
\cite[Proposition~5.12]{carles2010multiphase} in the case $X=\F L^1$
(Wiener algebra), and the proof relies only on the properties of $X$
required at the beginning of this section.
\end{proof}
 From now on, we assume
$(\alpha_j)_{j\in \Z^d}\in Y_2$. 
\subsection{Error estimate}
First, in view of the assumptions made in this section, a standard
fixed point argument yields, in view of \eqref{eq:Banach} and
Assumption~\ref{hyp:actionX}: 
\begin{lemma}\label{lem:exists-u}
  Let $d\ge 1$, $\sigma\in \N^*$ and $J\ge 1$. If $u_0\in X$,
  then there exists $T^\eps>0$ and a unique solution $u\in C([0,T^\eps];X)$ to
  \eqref{eq:u} such that $u_{\mid t=0}=u_0$. 
\end{lemma}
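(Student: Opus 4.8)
The plan is to recast \eqref{eq:u} as a fixed-point problem through Duhamel's formula and close it by the Banach fixed point theorem, exactly as in the classical local Cauchy theory; the only points requiring attention are the bookkeeping of the parameter $\eps$ and the fact that, by Assumption~\ref{hyp:actionX}, the group $e^{i\frac{t}{2}\Delta}$ is only known to act on $X$ for $t\in[0,T_0]$. Dividing \eqref{eq:u} by $i\eps$, the equation becomes $\d_t u = \frac{i\eps}{2}\Delta u - i\mu\eps^{J-1}|u|^{2\sigma}u$, whose linear part is generated by $e^{i\frac{\eps t}{2}\Delta}$. Hence any solution satisfies
\begin{equation*}
  u(t) = e^{i\frac{\eps t}{2}\Delta}u_0 - i\mu\eps^{J-1}\int_0^t e^{i\frac{\eps(t-s)}{2}\Delta}\(|u(s)|^{2\sigma}u(s)\)\,ds =: \Phi(u)(t),
\end{equation*}
and conversely any fixed point of $\Phi$ in $C([0,T^\eps];X)$ solves the equation. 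Since the propagator must be evaluated at times $\eps(t-s)$, I must keep $\eps t\le T_0$; as $\eps\le 1$ it suffices to impose $T^\eps\le T_0$, and then Assumption~\ref{hyp:actionX} gives $\|e^{i\frac{\eps(t-s)}{2}\Delta}f\|_X\le C\|f\|_X$ for all $0\le s\le t\le T^\eps$.

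Next I would establish the nonlinear estimate in $X$. Writing $|u|^{2\sigma}u = u^{\sigma+1}\bar u^{\sigma}$ as a product of $2\sigma+1$ factors and iterating the algebra inequality \eqref{eq:Banach}, one obtains $\||u|^{2\sigma}u\|_X\le C^{2\sigma}\|u\|_X^{2\sigma+1}$, provided complex conjugation is bounded on $X$; this holds for all the concrete spaces to be used here, since Fourier--Lebesgue and modulation spaces are invariant under conjugation (see Lemma~\ref{rl}). A telescoping of the same product yields the Lipschitz bound
\begin{equation*}
  \big\||u|^{2\sigma}u-|v|^{2\sigma}v\big\|_X\lesssim \(\|u\|_X+\|v\|_X\)^{2\sigma}\|u-v\|_X.
\end{equation*}

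I would then run the contraction argument on the ball $B_R=\set{u\in C([0,T^\eps];X):\ \sup_{[0,T^\eps]}\|u\|_X\le R}$ with $R=2C\|u_0\|_X$. Combining the group bound with the two nonlinear estimates gives
\begin{equation*}
  \sup_{[0,T^\eps]}\|\Phi(u)\|_X\le C\|u_0\|_X + C'\eps^{J-1}T^\eps R^{2\sigma+1},\qquad \sup_{[0,T^\eps]}\|\Phi(u)-\Phi(v)\|_X\le C'\eps^{J-1}T^\eps (2R)^{2\sigma}\sup_{[0,T^\eps]}\|u-v\|_X.
\end{equation*}
Choosing $T^\eps\le T_0$ small enough that $C'\eps^{J-1}T^\eps R^{2\sigma}\le \tfrac12$ makes $\Phi$ map $B_R$ into itself and contract it, so the Banach fixed point theorem provides a unique solution $u\in C([0,T^\eps];X)$; uniqueness in the whole space then follows by the standard continuation argument on successive small subintervals. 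The step deserving most care is not any single estimate but the coherent treatment of the rescaled time, which is precisely what forces $T^\eps\le T_0$. I would also remark that this constraint is harmless: for $J\ge 1$ and $\eps\le 1$ one has $\eps^{J-1}\le 1$, so the existence time can in fact be chosen depending only on $\|u_0\|_X$ and bounded below uniformly in $\eps\in[0,1]$.
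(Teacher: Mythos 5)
Your proof is correct and is precisely the ``standard fixed point argument'' the paper invokes without detail: Duhamel's formula, the algebra property \eqref{eq:Banach} together with Assumption~\ref{hyp:actionX} (with the rescaled time $\eps(t-s)$ kept below $T_0$), and the Banach fixed point theorem on a ball of $C([0,T^\eps];X)$. Your observation that boundedness of complex conjugation on $X$ is implicitly needed to estimate $|u|^{2\sigma}u$ via \eqref{eq:Banach} is a valid point the paper leaves tacit, and it does hold for the concrete spaces $\F L^1\cap\F L^\infty$ and $\F L^1\cap M^{1,1}$ used later.
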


To construct the approximate solution $ u_{\rm app}$, we have
discarded two families of terms:
\begin{itemize}
\item Non-resonant terms, involving the source term
 \begin{equation*}
    r_1:= \mu\eps^J\sum_j\sum_{(k_1,k_2,\dots,k_{2\sigma+1})\not\in \mathcal
       R_j}a_{k_1}\bar a_{k_2}\dots a_{k_{2\sigma+1}}
     e^{i(\phi_{k_1}-\phi_{k_2}+\dots +\phi_{2\sigma+1})/\eps}.
  \end{equation*}
\item Higher order terms, involving
  \begin{equation*}
r_2:=    \frac{\eps^2}{2}\sum_j \Delta a_j e^{i\phi_j/\eps}. 
  \end{equation*}
\end{itemize}
Indeed, $u_{\rm app}$ solves
\begin{equation}
  \label{eq:uapp}
  i\eps\d_t u_{\rm app} + \frac{\eps^2}{2}\Delta u_{\rm app}  =\mu
  \eps^J|u_{\rm app} |^{2\sigma}u_{\rm app}  +r_1+r_2.
\end{equation}
Duhamel's formula for $u-u_{\rm app}=:w$ reads
\begin{align*}
  w(t) &= -i\mu \eps^{J-1}\int_0^t
  e^{i\eps\frac{t-\tau}{2}\Delta}\(|u|^{2\sigma}u
         -|u_{\rm app} |^{2\sigma}u_{\rm app} \)(\tau)d\tau\\
       &\quad -i\mu \eps^{J-1}\sum_{j=1,2}\int_0^t
  e^{i\eps\frac{t-\tau}{2}\Delta}r_j(\tau)d\tau.
\end{align*}
In view of our assumptions on $X$, we readily have, thanks to
Minkowski inequality,
\begin{align*}
  \|w(t)\|_X&\le C \int_0^t \( \|u_{\rm app}(\tau)\|_X^{2\sigma} +
              \|w(\tau)\|_X^{2\sigma} \)\|w(\tau)\|_Xd\tau\\
  &\quad + C\eps^{-1}\sum_{j=1,2}\left\|\int_0^t
  e^{i\eps\frac{t-\tau}{2}\Delta}r_j(\tau)d\tau\right\|_X,
\end{align*}
for some $C$ independent of $\eps\in [0,1]$ and $t\in [0,T_0]$. In
view of the second point of Lemma~\ref{lem:exist-profiles}, we readily
have
\begin{equation*}
  \left\|\int_0^t
  e^{i\eps\frac{t-\tau}{2}\Delta}r_2(\tau)d\tau\right\|_X\lesssim
\eps^2. 
\end{equation*}
By construction, $r_1$ is the sum of terms of the
form $g(t,x) e^{ik\cdot x/\eps -\omega t/(2\eps)}$, with $k\in \Z^d$,
$\omega\in \Z$, and the
non-resonance property reads exactly $|k|^2\not =\omega $.
\begin{lemma}\label{lem:non-res-small}
  Let $k\in \R^d$, $\omega\in \R$, with $|k|^2\not =\omega $. Define
  \begin{equation*}
    D^\eps(t,x) =\int_0^t
  e^{i\eps\frac{t-\tau}{2}\Delta} \( g(\tau,x) e^{ik\cdot x/\eps -i\omega
    \tau/(2\eps)}\)d\tau. 
\end{equation*}
Then we have
\begin{align*}
  D^\eps(t,x) &= \frac{-2i\eps}{|k|^2-\omega}
  e^{i\eps\frac{t-\tau}{2}\Delta} \(g(\tau,x)e^{ik\cdot
                x/\eps-i\omega
    \tau/(2\eps)}\)\Big|_0^t\\
              +\frac{2i\eps}{|k|^2-\omega}&
                \int_0^t  e^{i\eps\frac{t-\tau}{2}\Delta} \(
                e^{ik\cdot x/\eps-i\omega \tau/(2\eps)}\(\frac{i}{2}\(\eps\Delta g +2k\cdot
                \nabla g\)+\d_t g\)(\tau,x)\)d\tau.  
\end{align*}
In particular, for $t\in [0,T_0]$,
\begin{align*}
  \|D^\eps(t)\|_X\lesssim \frac{\eps}{\left\lvert |k|^2-\omega
    \right\rvert} &\big(\|g\|_{L^\infty([0,t];X)} +
  \|\Delta g\|_{L^\infty([0,t];X)} + |k| \|\nabla  g\|_{L^\infty([0,t];X)}\\
                  &\quad +\|\d_t g\|_{L^\infty([0,t];X)}\big).
\end{align*}
\end{lemma}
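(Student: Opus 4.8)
The plan is to gain the factor $\eps/\bigl\lvert\,\abs{k}^2-\omega\,\bigr\rvert$ through a single integration by parts in time, using the non-resonance condition $\abs{k}^2\neq\omega$ to convert the slow prefactor into a large time-frequency. The mechanism is that the Schr\"odinger group acts on a plane wave essentially by a phase: since $\Delta\bigl(g\,e^{ik\cdot x/\eps}\bigr)=e^{ik\cdot x/\eps}\bigl(\Delta g+\tfrac{2i}{\eps}k\cdot\nabla g-\tfrac{\abs{k}^2}{\eps^2}g\bigr)$, propagating the modulated profile $g(\tau,x)e^{ik\cdot x/\eps}$ generates an internal oscillation $e^{-i\abs{k}^2(t-\tau)/(2\eps)}$, which combines with the prescribed factor $e^{-i\omega\tau/(2\eps)}$ into a net time-oscillation of frequency $(\abs{k}^2-\omega)/(2\eps)$, large precisely because the mode is non-resonant.

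First I would make this explicit by differentiating the integrand of $D^\eps$ in $\tau$. Writing $s=\eps(t-\tau)$ and using $\tfrac{d}{d\tau}e^{i\frac{s}{2}\Delta}=-\tfrac{i\eps}{2}\Delta\,e^{i\frac{s}{2}\Delta}$ together with the expansion of $\Delta\bigl(g\,e^{ik\cdot x/\eps}\bigr)$ above, one obtains the identity
\[\frac{d}{d\tau}\Bigl[e^{i\eps\frac{t-\tau}{2}\Delta}\bigl(g\,e^{ik\cdot x/\eps-i\omega\tau/(2\eps)}\bigr)\Bigr]=\frac{i(\abs{k}^2-\omega)}{2\eps}\,e^{i\eps\frac{t-\tau}{2}\Delta}\bigl(g\,e^{ik\cdot x/\eps-i\omega\tau/(2\eps)}\bigr)+e^{i\eps\frac{t-\tau}{2}\Delta}\bigl(e^{ik\cdot x/\eps-i\omega\tau/(2\eps)}\,R_g\bigr),\]
where $R_g$ is a linear combination of $k\cdot\nabla_x g$, $\eps\Delta g$ and $\d_t g$ with $\eps$-independent coefficients (hence carrying no negative power of $\eps$). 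The crucial point is that the order-$1/\eps$ contribution is exactly $\tfrac{i(\abs{k}^2-\omega)}{2\eps}$ times the integrand of $D^\eps$ itself; this is the cancellation that makes the integration by parts close.

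Next I would solve this identity for the integrand by multiplying through by $2\eps/\bigl(i(\abs{k}^2-\omega)\bigr)=-2i\eps/(\abs{k}^2-\omega)$ — legitimate since $\abs{k}^2\neq\omega$ — and integrate from $0$ to $t$. The total-derivative term integrates to the boundary contribution on the first line of the statement, while the $R_g$-term produces the remainder integral on the second line. Finally I would estimate in $X$: for $t\in[0,T_0]$ and $\eps\in[0,1]$ the propagation time $s=\eps(t-\tau)$ stays in $[0,T_0]$, so Assumption~\ref{hyp:actionX} bounds $e^{i\frac{s}{2}\Delta}$ uniformly on $X$, and \eqref{eq:plane} strips the factor $e^{ik\cdot x/\eps}$ at no cost, while the $\tau$-integral over $[0,t]\subset[0,T_0]$ costs only a constant. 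Collecting the terms yields $\norm{D^\eps(t)}_X\lesssim \frac{\eps}{\lvert\,\abs{k}^2-\omega\,\rvert}\bigl(\norm{g}_{L^\infty([0,t];X)}+\norm{\Delta g}_{L^\infty([0,t];X)}+\abs{k}\,\norm{\nabla g}_{L^\infty([0,t];X)}+\norm{\d_t g}_{L^\infty([0,t];X)}\bigr)$, the factor $\abs{k}$ arising from the $k\cdot\nabla g$ term and $\eps\le1$ absorbing the $\eps\Delta g$ term.

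The only genuinely delicate point is the bookkeeping in the second step: one must check that, after expanding $\Delta(g\,e^{ik\cdot x/\eps})$ and differentiating $e^{-i\omega\tau/(2\eps)}$, the two singular contributions $\tfrac{i\abs{k}^2}{2\eps}g$ and $-\tfrac{i\omega}{2\eps}g$ combine into the single factor $\tfrac{i(\abs{k}^2-\omega)}{2\eps}g$ multiplying the \emph{unaltered} integrand, so that every $O(1/\eps)$ term is accounted for and what remains in $R_g$ stays uniformly bounded as $\eps\to0$. Everything after that is a routine application of \eqref{eq:plane} and Assumption~\ref{hyp:actionX}.
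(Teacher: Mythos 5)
Your proposal is correct and rests on the same mechanism as the paper's proof: a single integration by parts in $\tau$, exploiting the non-resonant time frequency $(|k|^2-\omega)/(2\eps)$ to gain the factor $\eps/\lvert|k|^2-\omega\rvert$, followed by \eqref{eq:plane} and Assumption~\ref{hyp:actionX} for the norm bound. The only difference is cosmetic: the paper carries out the integration by parts on the Fourier side, splitting the phase as $\theta=\theta_1+\theta_2$ and integrating $e^{i\tau\theta_2/2}$, whereas you differentiate the propagated integrand directly in physical space and solve for it — the two computations are term-by-term equivalent, and your bookkeeping of the $O(1/\eps)$ cancellation and of the remainder $R_g$ is accurate.
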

\begin{proof}
  The last estimate follows directly from the identity of the lemma,
  \eqref{eq:plane} and Assumption~\ref{hyp:actionX}, so we only address the
  identity, which is essentially established in
  \cite[Lemma~5.7]{carles2010multiphase} (up to the typos
  there). Setting $\eta = \xi 
  -k/\eps$, the (spatial) Fourier 
  transform of $D$ is given by 
 \begin{align*}
    \widehat D^\eps(t,\xi)&= e^{-i\eps t|\eta+k/\eps|^2/2}\int_0^t
    e^{i\eps\tau|\eta+k/\eps|^2/2} \,
    \hat b\(\tau,\eta\)e^{-i\omega
      \tau/(2\eps)}d\tau\\
&=e^{-i\eps t|\eta+k/\eps|^2/2}\int_0^t
    e^{i\tau\theta/2} \,
                             \hat b\(\tau,\eta\)d\tau \\
   &=e^{-i\eps t|\eta+k/\eps|^2/2}\int_0^t
    e^{i\tau\theta_2/2} \,e^{i\tau\theta_1/2}\, 
    \hat b\(\tau,\eta\)d\tau ,
  \end{align*}
where we have denoted 
\begin{equation*}
  \theta = \eps \left\lvert \eta+\frac{k}{\eps}\right\rvert^2
  -\frac{\omega}{\eps} = \underbrace{\eps|\eta|^2 +2 k\cdot
    \eta}_{\theta_1} +\underbrace{\frac{|k|^2-\omega}{\eps}}_{\theta_2}. 
\end{equation*}
Integrate by parts, by first integrating $e^{i\tau\theta_2/2}$: 
\begin{equation*}
 e^{i\eps\frac{t}{2}|\xi|^2} \widehat D^\eps(t,\xi) = -\frac{2i}{\theta_2} e^{i\tau\theta/2} 
    \hat b\(\tau,\eta\)\Big|_0^t +\frac{2i}{\theta_2}\int_0^t
    e^{i\tau\theta/2} \( i\frac{\theta_1}{2}\widehat
    b\(\tau,\eta\)+
\widehat {\d_t b}\(\tau,\eta\)\)d\tau.
\end{equation*}
The identity follows by inverting the Fourier transform. 
\end{proof}
We infer:
\begin{proposition}\label{prop:errorWNLGO}
  Let $d\ge 1$, $\sigma\in \N^*$, $J\ge 1$, and $(\alpha_j)_{j\in
    \Z^d}\in Y_2$. Then for $T$ as in Lemma~\ref{lem:exist-profiles},
  \begin{equation*}
    \|u-u_{\rm app}\|_{L^\infty([0,T];X)}\lesssim \eps. 
  \end{equation*}
\end{proposition}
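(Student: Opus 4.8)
The plan is to run a bootstrap argument on $w=u-u_{\rm app}$, starting from the integral inequality for $\|w(t)\|_X$ already derived above. The goal is to show that, after multiplication by $\eps^{-1}$, the two source contributions $r_1$ and $r_2$ are each $O(\eps)$, so that the inequality becomes a closed nonlinear Gronwall estimate. The contribution of $r_2$ requires no further work: it has already been bounded by $\eps^2$ using the second part of Lemma~\ref{lem:exist-profiles}, whence $\eps^{-1}\|\int_0^t e^{i\eps\frac{t-\tau}{2}\Delta}r_2\,d\tau\|_X\lesssim\eps$.

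First I would treat the non-resonant source $r_1$ by applying Lemma~\ref{lem:non-res-small} to each of its constituent terms. By construction $r_1=\mu\eps^J\sum g\,e^{ik\cdot x/\eps-i\omega t/(2\eps)}$ with $k\in\Z^d$, $\omega\in\Z$ and $|k|^2\neq\omega$; since $|k|^2$ and $\omega$ are integers, non-resonance forces $\big||k|^2-\omega\big|\ge 1$, so the small denominator in Lemma~\ref{lem:non-res-small} is in fact harmless. Each term therefore gains a factor $\eps$, and together with the prefactor $\eps^J$ this produces a bound of size $\eps^{J+1}$ times the $X$-norms of $g$, $\nabla g$, $\Delta g$ and $\partial_t g$ (the last rewritten through the profile equation \eqref{eq:profiles}). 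Hence $\eps^{-1}\|\int_0^t e^{i\eps\frac{t-\tau}{2}\Delta}r_1\,d\tau\|_X\lesssim\eps^{J}\lesssim\eps$ for $J\ge 1$, \emph{provided} the sum over all the terms of $r_1$ converges.

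The main obstacle is precisely this summability. Each $g$ is a product $a_{k_1}\bar a_{k_2}\cdots a_{k_{2\sigma+1}}$, so the Banach algebra property \eqref{eq:Banach} controls $\|g\|_X$ and its first two derivatives by products of $\|a_{k_\ell}\|_X$, $\|\nabla a_{k_\ell}\|_X$ and $\|\Delta a_{k_\ell}\|_X$, while the weight $|k|\le\sum_\ell\langle k_\ell\rangle$ appearing in Lemma~\ref{lem:non-res-small} can be distributed among the factors. Summing over $j$ and over all tuples $(k_1,\dots,k_{2\sigma+1})$ then yields a finite quantity controlled by the $Y_2$-norm of $(a_j(\tau))_j$, exactly because $Y_2$ builds in the weights $\langle j\rangle^2\|a_j\|_X$, $\langle j\rangle\|\nabla a_j\|_X$ and $\|\Delta a_j\|_X$. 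Since $(a_j)\in C([0,T];Y_2)$ by Lemma~\ref{lem:exist-profiles}, this sum is bounded uniformly on $[0,T]$; this is the computation carried out in \cite{carles2010multiphase}, and it goes through here using only the abstract properties of $X$.

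It then remains to close the estimate. Writing $M:=\sup_{[0,T]}\|(a_j(\tau))_j\|_Y<\infty$, the triangle inequality together with the plane-wave invariance \eqref{eq:plane} gives $\|u_{\rm app}(t)\|_X\le M$ on $[0,T]$, since $e^{i\phi_j/\eps}=e^{-i|j|^2t/(2\eps)}e_{j/\eps}$ and $\|a_j e_{j/\eps}\|_X=\|a_j\|_X$. Combining the two source bounds with the integral inequality yields
\[
\|w(t)\|_X\le C\int_0^t\big(M^{2\sigma}+\|w(\tau)\|_X^{2\sigma}\big)\|w(\tau)\|_X\,d\tau+C\eps.
\]
A standard bootstrap finishes the proof: as long as $\|w\|_{L^\infty([0,t];X)}\le 1$, the bracket is bounded by $M^{2\sigma}+1$, so Gronwall's lemma gives $\|w(t)\|_X\le C\eps\,e^{C(M^{2\sigma}+1)T}$; for $\eps$ small this is $<1$, the bootstrap assumption is recovered, and the continuation criterion from Lemma~\ref{lem:exists-u} guarantees that $u$ exists on all of $[0,T]$. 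We conclude $\|u-u_{\rm app}\|_{L^\infty([0,T];X)}\lesssim\eps$.
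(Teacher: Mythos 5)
Your proposal is correct and follows essentially the same route as the paper: bound the $r_2$ contribution by $\eps^2$ via the $Y_2$ regularity, apply Lemma~\ref{lem:non-res-small} to each non-resonant term of $r_1$ using $\bigl\lvert |k|^2-\omega\bigr\rvert\ge 1$ to gain a factor $\eps$, sum using the $Y_2$ bounds on the profiles (and the fact that $(\d_t a_j)_j\in C([0,T];Y)$), and close with a Gronwall/continuity argument. Your treatment of the summability of the non-resonant sum is in fact more explicit than the paper's, which delegates that computation to the properties of $Y_2$ and Lemma~\ref{lem:non-res-small} without spelling it out.
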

\begin{proof}
  First, Lemma~\ref{lem:exist-profiles} and \eqref{eq:profiles} imply
  that we also have $(\d_t a_j)_{j\in \Z^d}\in C([0,T];Y)$. Then, in
  view of these properties and Lemma~\ref{lem:non-res-small}, we have
  \begin{equation*}
  \left\|\int_0^t
  e^{i\eps\frac{t-\tau}{2}\Delta}r_1(\tau)d\tau\right\|_X\lesssim
\eps^{J+1},
\end{equation*}
where we have used the fact that in the application of
Lemma~\ref{lem:non-res-small}, $\left\lvert
  |k|^2-\omega\right\rvert\ge 1$, since now $k\in \Z^d$ and $\omega\in
\Z$. 
We infer
\begin{equation*}
  \|w(t)\|_X\le C \int_0^t \( \|u_{\rm app}(\tau)\|_X^{2\sigma} +
              \|w(\tau)\|_X^{2\sigma} \)\|w(\tau)\|_Xd\tau + C\eps^J + C\eps,
            \end{equation*}
where $C$ is independent of $\eps\in [0,1]$ and $t\in
[0,T]$. Lemmas~\ref{lem:exist-profiles} and 
\ref{lem:exists-u}  yield $w\in C([0,\min(T,T^\eps)];X)$. Since
$w_{\mid t=0}=0$, the
above inequality and a standard
continuity argument imply that $u\in     C([0,T];X)$     provided
that $\eps>0$ is sufficiently small, along with the announced error
estimate. 
\end{proof}

\section{Norm inflation with infinite loss of regularity}
\label{sec:conclusion}

\subsection{Proof of Theorem~\ref{theo:fourier-lebesgue}}

For $1<J<2$ to be fixed later, let $u^\eps$ defined by
\eqref{eq:upsi}, and consider the initial data given by
Lemma~\ref{lem:creation-zero-multiD} (if $d\ge 2$) or
Lemma~\ref{lem:creation-zero-1Dquintic} (if $d=1$ and $\sigma\ge 2$). 
We apply the analysis from Section~\ref{sec:wnlgo} with 
$X=\F L^1\cap \F L^\infty$. This is obviously a Banach algebra,
\eqref{eq:Banach} holds, thanks
to Young inequality, the $X$-norm is invariant by translation, and by
multiplication by 
plane wave oscillations as in \eqref{eq:plane}.  Assumption~\ref{hyp:actionX} is satisfied with $C=1$
for any $T_0>0$, since the Schr\"odinger group is a Fourier multiplier
of modulus one. We can therefore invoke the conclusions of Lemma~\ref{lem:creation-zero-multiD} (if $d\ge 2$),
Lemma~\ref{lem:creation-zero-1Dquintic} (if $d=1$ and $\sigma\ge 2$),
and Proposition~\ref{prop:errorWNLGO} (in all cases). In order to
translate these properties involving $u$ solving \eqref{eq:u} in terms
of $\psi$ solving \eqref{nls}, we use the following lemma:
\begin{lemma}\label{lem:scaling-fourier} Let $d\geq 1$.
  For $f\in \mathcal{S}'(\R^d)$ and $j \in \R^d,$ denote 
\[ I^{\eps}(f, j)(x) =  f(x) e^{ij \cdot x/\eps}.\]
\begin{itemize}
\item  For all $s \in \mathbb R$, $p\in [1,\infty]$, and $f\in
  \mathcal{F}L^{p}_{s}(\R^d)$, $\|I^{\eps}(f,
  0)\|_{\mathcal{F}L^{p}_{s}} =
  \|f\|_{\mathcal{F}L^p_{s}}$.
\item  Let $j \in \R^d\setminus\{ 0\}$. For all
  $s \leq 0$, there exists $C= C(j)$ independent of $p\in [1,\infty]$ such that  for
  all $f\in \mathcal{S}(\R^d)$,
\[ \| I^{\eps} (f, j) \|_{\mathcal{F}L^{p}_{s}}\le C
    \eps^{  |s| } 
    \|f\|_{\mathcal{F}L^{p}_{|s|}}. \] 
\end{itemize}  
 \end{lemma}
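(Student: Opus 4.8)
The plan is to push everything to the Fourier side, where the modulation operator $I^\eps(\cdot,j)$ turns into a translation. The first bullet is immediate: since $e^{i0\cdot x/\eps}\equiv 1$ we have $I^\eps(f,0)=f$, so there is nothing to prove. For the second bullet, I first compute, with the convention $\hat f(\xi)=(2\pi)^{-d}\int e^{-ix\cdot\xi}f(x)\,dx$, that
\[ \widehat{I^\eps(f,j)}(\xi) = \hat f\!\left(\xi-\frac{j}{\eps}\right), \]
so that, after the change of variables $\eta=\xi-j/\eps$,
\[ \|I^\eps(f,j)\|_{\mathcal{F}L^p_s}^p = \int_{\R^d}\left|\hat f(\eta)\right|^p \left\langle \eta+\frac{j}{\eps}\right\rangle^{sp}\,d\eta, \]
with the obvious modification when $p=\infty$.

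The key step is then a purely pointwise weight comparison: I claim that for $s\le 0$, $j\ne 0$ and $0<\eps\le 1$,
\[ \left\langle \eta+\frac{j}{\eps}\right\rangle^{s} \le C(j)\,\eps^{|s|}\,\langle\eta\rangle^{|s|},\qquad \forall\eta\in\R^d, \]
with a constant $C(j)$ that does \emph{not} depend on $p$ (nor on $\eta$ or $\eps$). Granting this, I raise both sides to the power $p$, insert the bound into the integral above, and recognise the right-hand side as $C(j)^p\eps^{|s|p}\|f\|_{\mathcal{F}L^p_{|s|}}^p$; taking $p$-th roots yields the announced estimate, and because the pointwise inequality is uniform in $p$ the same $C(j)$ serves for every $p\in[1,\infty]$, the case $p=\infty$ being handled identically with the essential supremum.

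To prove the pointwise inequality I would invoke Peetre's inequality in the form $\langle a+b\rangle\le\sqrt2\,\langle a\rangle\langle b\rangle$, which itself follows from $1+|a+b|^2\le 2(1+|a|^2)(1+|b|^2)$. Applying it to the decomposition $j/\eps=(\eta+j/\eps)+(-\eta)$ gives
\[ \left\langle\frac{j}{\eps}\right\rangle \le \sqrt2\,\left\langle\eta+\frac{j}{\eps}\right\rangle\langle\eta\rangle, \]
hence $\langle\eta+j/\eps\rangle\langle\eta\rangle\ge 2^{-1/2}\langle j/\eps\rangle\ge 2^{-1/2}|j|/\eps$. Raising this to the power $|s|\ge 0$ and rearranging produces exactly the claim with $C(j)=(\sqrt2/|j|)^{|s|}$. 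This is precisely where both hypotheses enter: $s\le 0$, so that the negative power converts the lower bound on the product into the desired upper bound and simultaneously extracts the factor $\eps^{|s|}$; and $j\ne 0$, so that $\langle j/\eps\rangle\gtrsim 1/\eps$ genuinely produces a gain as $\eps\to0$.

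The argument is elementary and I anticipate no serious obstacle; the only points requiring care are the \emph{uniformity of the constant in $p$}, which is why I keep the estimate at the pointwise level and integrate only at the very end, and the systematic use of $s\le 0$, since for $s>0$ the shifted weight $\langle\eta+j/\eps\rangle^s$ would blow up as $\eps\to0$ rather than supply a small factor.
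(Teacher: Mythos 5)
Your proposal is correct and follows essentially the same route as the paper: reduce to the Fourier side where $I^\eps(\cdot,j)$ is a translation by $j/\eps$, and exploit the lower bound $\langle\xi\rangle\,\langle\xi-j/\eps\rangle\gtrsim 1/\eps$ (which the paper asserts and you justify explicitly via Peetre's inequality) to convert the shifted weight $\langle\cdot\rangle^{s}$, $s\le 0$, into $\eps^{|s|}\langle\cdot\rangle^{|s|}$ pointwise, uniformly in $p$. No gaps.
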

 \begin{proof}
   We have obviously
 \[ \widehat{ I^{\eps}(f, j)} (\xi)=  \hat{f} \left( \xi-  \frac{ j}{\eps}\right).\]
The first point is thus  trivial. For the second one,
if $p$ is finite,
\begin{equation*}
\| I^{\eps}(f, j)\|_{ \mathcal{F}L^p_{s}}^p =\int \langle \xi \rangle ^{ps}  \left|  \hat{f} \left( \xi-  \frac{ j}{\eps}\right) \right|^p d\xi.
\end{equation*} 
Note that, for $s \leq 0$,
 \begin{align*}
 \| I^{\eps}(f, j)\|_{ \mathcal{F}L^p_{s}}^p& = \int
  \<\xi\>^{ps}\<\xi-\frac{j}{\eps}\>^{ps} \<\xi-\frac{j}{\eps}\>^{p|s|}
     \left|  \hat{f} \left( \xi-  \frac{ j}{\eps}\right) \right|^p d\xi\\
   & \le  \sup_{\xi \in \R^d} \(\<\xi\>^{-1}\<\xi-\frac{j}{\eps}\>^{-1}
     \)^{p|s|} \|f\| _{ \mathcal{F}L^p_{|s|}}^p                                          
 \end{align*}
 For $j\not =0$,
 \begin{equation*}
 \inf_{\xi\in \R^d}  \<\xi\>\<\xi-\frac{j}{\eps}\>\gtrsim \frac{1}{\eps},
 \end{equation*}
 hence the second point of the lemma in the case $p$ finite. The case
 $p=\infty$ follows from the same estimate, controlling the supremum of
 a product by the product of the suprema. 
 \end{proof}
With $u_{\mid t=0}$ as in Lemma~\ref{lem:creation-zero-multiD}  or
Lemma~\ref{lem:creation-zero-1Dquintic}, the above lemma yields, in
view of \eqref{eq:upsi}, and for $s<0$,
\begin{equation*}
  \|\psi(0)\|_{\F L^p_s}\lesssim \eps^{\frac{J-2}{2\si}+|s|}. 
\end{equation*}
This sequence of initial data is small (in $\F
 L^p_s$ for all $p$) provided that
 \begin{equation}\label{eq:cond-s}
   |s|>\frac{2-J}{2\si}.
 \end{equation}
 Lemma~\ref{lem:creation-zero-multiD}  and
Lemma~\ref{lem:creation-zero-1Dquintic} show that there exists
$\tau>0$ independent of $\eps$ such that
\begin{equation*}
  \|a_0(\tau)\|_{\F L^p_k}\gtrsim \eps^{J-1}, \quad \forall p\in
  [1,\infty],\ \forall k\in \R.
\end{equation*}
By construction,
\begin{equation*}
  \hat u_{\rm app}(t,\xi) = \sum_{j\in \Z^d}e^{-it\frac{|j|^2}{2\eps}}\hat
    a_j\(t,\xi-\frac{j}{\eps}\),
  \end{equation*}
  so we infer, at least for $\eps$ sufficiently small ($(a_j(\tau))_{j\in \Z^d}\in  Y$ from Lemma~\ref{lem:exist-profiles}),
 \begin{equation*}
  \|u_{\rm app}(\tau)\|_{\F L^p_k}\gtrsim \eps^{J-1}, \quad \forall p\in
  [1,\infty],\ \forall k\in \R.
\end{equation*}
On the other hand, since $X=\F L^1\cap \F L^\infty\subset \F L^p$,
Proposition~\ref{prop:errorWNLGO} yields
\begin{equation*}
  \|u(\tau)-u_{\rm app}(\tau)\|_{\F L^p_k} \lesssim \|u(\tau)-u_{\rm
    app}(\tau)\|_{X}\lesssim \eps,\quad \forall k\le 0, 
\end{equation*}
hence, if $J<2$,
\begin{equation*}
   \|u(\tau)\|_{\F L^p_k}\gtrsim \eps^{J-1}, \quad \forall p\in
  [1,\infty],\ \forall k\le 0.
\end{equation*}
Therefore,
\begin{equation*}
  \| \psi(\eps\tau)\|_{\F L^p_k}\gtrsim
    \eps^{\frac{J-2}{2\si}}\times \eps^{J-1}, \quad \forall p\in
  [1,\infty],\ \forall k\in \R.
  \end{equation*}
  The right hand side is unbounded as $\eps\to 0$ provided that
  \begin{equation*}
    \frac{J-2}{2\si}+J-1<0,\quad \text{that is},\quad J<\frac{2\si+2}{2\si+1}.
  \end{equation*}
  Then given $s<-1/(2\si+1)$, we can always find a $J\in ]1,2[$
  satisfying \eqref{eq:cond-s} and the above constraint.
  Theorem~\ref{theo:fourier-lebesgue} follows in
  the case $k\le 0$, by taking for instance $\eps_n=1/n$ and $t_n=
  \eps_n\tau$. In the case $k>0$, we just recall the obvious estimate
  \begin{equation*}
    \| \psi(\eps\tau)\|_{\F L^p_k}\ge \| \psi(\eps\tau)\|_{\F L^p},
      \end{equation*}
      and the proof of Theorem~\ref{theo:fourier-lebesgue} is
      complete. 
\subsection{Proof of Theorem~\ref{theo:modulation}}

In the case of modulation spaces, the proof goes along the same lines
as above, up to adapting the space $X$ and
Lemma~\ref{lem:scaling-fourier}. 
\smallbreak

We choose $X=\F L^1\cap M^{1,1}$. Theorem~\ref{theo:algebra} shows that the
Banach algebra property \eqref{eq:Banach} is satisfied. $\F L^1$ is
translation invariant, and for $M^{1,1}$,
\begin{align*}
  V_g\(\tau_k f\)(x,y)&= \int_{\R^d} f(t-k)
  \overline{g(t-x)}e^{-iy\cdot t}dt = e^{-iy\cdot k}\int_{\R^d} f(t)
                        \overline{g(t+k-x)}e^{-iy\cdot t}dt\\
  &=e^{-iy\cdot k} V_g(f)(x-k,y),
\end{align*}
and thus
\begin{equation*}
   \|\tau_k f \|_{M^{1,1}} = \|V_g\(\tau_k f\)\|_{L^1_{x,y}}  = \|f\|_{M^{1,1}} . 
\end{equation*}
We have used
already the fact that \eqref{eq:plane} is satisfied on $\F L^1$. On
$M^{1,1}$, this is the case too, since for $k\in \R^d$,
\begin{equation*}
  V_g\(f e_k\) (x,y) = \int_{\R^d} f(t) e^{ik\cdot t}
  \overline{g(t-x)}e^{-i y\cdot t}dt =  V_g\(f \) (x,y-k),
\end{equation*}
and so
\begin{align*}
  \|fe_k\|_{M^{1,1}} = \|V_g\(fe_k\)\|_{L^1_{x,y}} =
  \|V_g\(f\)\|_{L^1_{x,y}} = \|f\|_{M^{1,1}} . 
\end{align*}
Finally, Assumption~\ref{hyp:actionX} is satisfied thanks to
Proposition~\ref{uf}, and we can again invoke
Lemma~\ref{lem:creation-zero-multiD}, 
Lemma~\ref{lem:creation-zero-1Dquintic}, 
and Proposition~\ref{prop:errorWNLGO}.
\smallbreak

Like before, Lemma~\ref{lem:creation-zero-multiD} and 
Lemma~\ref{lem:creation-zero-1Dquintic} show that there exists
$\tau>0$ independent of $\eps$ such that
\begin{equation*}
  \|a_0(\tau)\|_{M^{p,q}_k}\gtrsim \eps^{J-1}, \quad \forall p,q\in
  [1,\infty],\ \forall k\in \R.
\end{equation*}
By the same asymptotic decoupling phenomenon as in the case of $\F
L^p$ spaces, we infer
\begin{equation*}
  \|u_{\rm app}(\tau)\|_{M^{p,q}_k}\gtrsim \eps^{J-1}, \quad \forall p,q\in
  [1,\infty],\ \forall k\in \R.
\end{equation*}
In view of Lemma~\ref{rl}, $X
\hookrightarrow M^{p,q}_k$ for all $p,q\ge 1$ and all $k\le 0$, and so
\begin{equation*}
  \|u(\tau)-u_{\rm app}(\tau)\|_{M^{p,q}_k}\lesssim  \|u(\tau)-u_{\rm
    app}(\tau)\|_X\lesssim \eps, \quad \forall p,q\in
  [1,\infty],\ \forall k\le 0. 
\end{equation*}
The analogue of Lemma~\ref{lem:scaling-fourier} is the following:
\begin{lemma}\label{lem:scaling-modulation} Let $d\geq 1$. 
  For $f\in \mathcal{S}'(\R^d)$ and $j \in \R^d,$ denote 
\[ I^{\eps}(f, j)(x) =  f(x) e^{ij \cdot x/\eps}.\]
\begin{itemize}
\item  For all $s \in \mathbb R$, $p,q\in [1,\infty]$, and $f\in
  M^{p,q}_s(\R^d)$, $\|I^{\eps}(f,
  0)\|_{M^{p,q}_s} =
  \|f\|_{M^{p,q}_s}$.
\item  Let $j \in \R^d\setminus\{ 0\}$. For all
  $s \leq 0$, there exists $C= C(j)$ independent of $p,q\in [1,\infty]$ such that  for
  all $f\in \mathcal{S}(\R^d)$,
\[ \| I^{\eps} (f, j) \|_{M^{p,q}_s}\le C
    \eps^{  |s| } 
    \|f\|_{M^{p,q}_s}. \] 
\end{itemize}  
 \end{lemma}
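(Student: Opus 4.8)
The plan is to follow verbatim the scheme of the Fourier--Lebesgue analogue, Lemma~\ref{lem:scaling-fourier}, the only modification being that multiplication by $e_{j/\eps}$ now acts on the STFT as a \emph{translation in the frequency variable} rather than a translation of the Fourier transform. First I would record the identity already computed in the proof of Theorem~\ref{theo:modulation}: writing $I^\eps(f,j)=f\,e_{j/\eps}$,
\[
  V_g\(I^\eps(f,j)\)(x,y)=V_g(f)\(x,y-\frac{j}{\eps}\).
\]
The first bullet is then immediate, since $I^\eps(f,0)=f$ gives $\|I^\eps(f,0)\|_{M^{p,q}_s}=\|f\|_{M^{p,q}_s}$ for every $s\in\R$ and $p,q\in[1,\infty]$.

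For the second bullet I would insert this identity into Definition~\ref{ms}, change variables $z=y-j/\eps$ in the outer integral, and abbreviate $F(z)=\|V_g(f)(\cdot,z)\|_{L^p_x}$ (which is Schwartz in $z$, since $V_g(f)\in\Sch(\R^{2d})$), obtaining for $q<\infty$
\[
  \|I^\eps(f,j)\|_{M^{p,q}_s}^q=\int_{\R^d} F(z)^q\,\<z+\tfrac{j}{\eps}\>^{sq}\,dz.
\]
To isolate a weighted norm of $f$ I would multiply and divide by $\<z\>^{|s|q}$ exactly as in Lemma~\ref{lem:scaling-fourier}, using $-sq=|s|q$:
\[
  \<z+\tfrac{j}{\eps}\>^{sq}=\(\<z\>\,\<z+\tfrac{j}{\eps}\>\)^{sq}\<z\>^{|s|q}.
\]
Since $sq\le0$ and $\<z\>\,\<z+\tfrac{j}{\eps}\>\ge\tfrac1{\sqrt2}\<\tfrac{j}{\eps}\>\gtrsim|j|/\eps$, the first factor is $\le 2^{|s|q/2}|j|^{-|s|q}\eps^{|s|q}$ uniformly in $z$; substituting and taking the $q$-th root (the case $q=\infty$ being identical with a supremum in place of the integral) yields
\[
  \|I^\eps(f,j)\|_{M^{p,q}_s}\le 2^{|s|/2}|j|^{-|s|}\,\eps^{|s|}\,\|f\|_{M^{p,q}_{|s|}},
\]
with $C(j)=2^{|s|/2}|j|^{-|s|}$ independent of $p,q\in[1,\infty]$.

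The one delicate point — which I would flag explicitly — is \emph{which} weight survives on $f$. The step above unavoidably produces the norm $\|f\|_{M^{p,q}_{|s|}}$, with the \emph{positive} weight $|s|$, precisely as in Lemma~\ref{lem:scaling-fourier}: the modulation $e_{j/\eps}$ pushes the frequency mass of $V_g(f)$ to $|y|\sim1/\eps$, and the region $z\approx-j/\eps$, where $\<z+j/\eps\>^{s}$ is of order one while $\<z\>^{s}$ is as small as $\eps^{|s|}$, cannot be controlled against $\<z\>^{sq}$ uniformly in $f$. Consequently the inequality \emph{as written}, with $\|f\|_{M^{p,q}_s}$ on the right, is false: taking $f=\chi\,e_{-j/\eps}$ for a fixed bump $\chi$ gives $I^\eps(f,j)=\chi$, whence $\|I^\eps(f,j)\|_{M^{p,q}_s}\sim1$ while $\eps^{|s|}\|f\|_{M^{p,q}_s}\sim\eps^{2|s|}\to0$. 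The statement should therefore read $\|I^\eps(f,j)\|_{M^{p,q}_s}\le C\eps^{|s|}\|f\|_{M^{p,q}_{|s|}}$, which is exactly what is used in the proof of Theorem~\ref{theo:modulation}: there the lemma is only applied to the fixed Schwartz profile $\alpha$ of Lemmas~\ref{lem:creation-zero-multiD}--\ref{lem:creation-zero-1Dquintic}, for which $\|\alpha\|_{M^{p,q}_{|s|}}<\infty$, and it yields $\|\psi(0)\|_{M^{p,q}_s}\lesssim\eps^{\frac{J-2}{2\sigma}+|s|}$.
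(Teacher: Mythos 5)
Your proof is correct and is essentially the paper's own argument: the paper likewise starts from the identity $V_g\bigl(f e^{ij\cdot x/\eps}\bigr)(x,y)=V_gf\bigl(x,y-\tfrac{j}{\eps}\bigr)$ and applies Peetre's inequality $\langle y+\tfrac{j}{\eps}\rangle^s\le\langle \tfrac{j}{\eps}\rangle^s\langle y\rangle^{|s|}$ for $s\le 0$, which is the same computation as your multiply-and-divide step. Your flag about the weight is also right: the paper's displayed proof ends with $\bigl\|\,\|V_gf(x,y)\|_{L^p_x}\langle y\rangle^{|s|}\bigr\|_{L^q_y}=\|f\|_{M^{p,q}_{|s|}}$, so the right-hand side of the lemma as printed, $\|f\|_{M^{p,q}_s}$, is a typo for $\|f\|_{M^{p,q}_{|s|}}$, in line with its Fourier--Lebesgue counterpart, Lemma~\ref{lem:scaling-fourier}; your counterexample $f=\chi e^{-ij\cdot x/\eps}$ correctly shows the literal statement fails for $s<0$, and, as you observe, the proof of Theorem~\ref{theo:modulation} only applies the estimate to a fixed Schwartz profile, for which the $M^{p,q}_{|s|}$ norm is finite, so nothing downstream is affected.
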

 \begin{proof}
   The first point is proven like \eqref{eq:plane} above. For the
   second point, write
\begin{align*}
\| I^{\eps} (f, j) \|_ {M^{p,q}_s} & = \left\| \left\|V_{g}f\(x,y-\frac{j}{\eps}\)\right\|_{L^p_x} \<y\>^s\right\|_{L_y^q}
  =  \left\| \|V_{g}f(x,y)\|_{L^p_x} \<y+ \frac{j}{ \eps}\>^s\right\|_{L_y^q}\\
& \le\<\frac{j}{\eps}\>^s   \left\| \|V_{g}f(x,y)\|_{L^p_x}
   \<y\>^{|s|}\right\|_{L_y^q} ,
\end{align*}
where we have used Peetre inequality $\<a+b\>^s\le
\<a\>^s\<b\>^{|s|}$. The lemma follows.
 \end{proof}
At this stage, we can repeat the same arguments as in the previous
subsection, and Theorem~\ref{theo:modulation} follows.

\end{document}